\newcommand{\R}{\mathbb{R}}
\newcommand{\cR}{\mathcal{R}}
\newcommand{\Z}{\mathbb{Z}}
\newcommand{\cT}{\mathcal{T}}
\newcommand{\cS}{\mathcal{S}}
\newcommand{\cB}{\mathcal{B}}
\newcommand{\cE}{\mathcal{E}}
\newcommand{\cI}{\mathcal{I}}
\newcommand{\cC}{\mathcal{C}}
\newcommand{\cG}{\mathcal{G}}
\newcommand{\cP}{\mathcal{P}}
\newcommand{\la}{\lambda}
\newcommand{\be}{\begin{equation}}
\newcommand{\ee}{\end{equation}}
\newtheorem{thm}{Theorem}[section]
\newtheorem{lemma}[thm]{Lemma}
\newtheorem{cor}[thm]{Corollary}
\newtheorem*{namethm}{Theorem}
\begin{document}
\title{Exactness of the Euclidean algorithm and of the Rauzy induction on the space of interval exchange transformations}
\author{Tomasz Miernowski \and Arnaldo Nogueira}
\date{\today}
\maketitle

{\abstract{The two-dimensional homogeneous Euclidean algorithm is the central motivation for the definition of the classical multidimensional continued fraction algorithms, as Jacobi-Perron, Poincar\'e, Brun and Selmer algorithms. The Rauzy induction, a generalization of the Euclidean algorithm, is a key tool in the study of interval exchange transformations. Both maps are known to be dissipative and ergodic with respect to Lebesgue measure. Here we prove that they are exact.}}

\section{Introduction}
Here we study the dynamics of a class of piecewise linear maps defined in the $n$-dimensional Euclidean space, which are  dissipative and ergodic with respect to Lebesgue measure. The aim of this paper is to prove that they bare a stronger property: they are exact, that is, they satisfy a kind of Kolmogorov $0-1$ law.  Our approach suits the whole class of  homogeneous multidimensional continued fraction algorithms. However, for clearness sake, we have chosen to concentrate on two particular examples: the Euclidean algorithm being the central model for those maps and the Rauzy induction acting on the space of interval exchange transformations.

A nonsingular ergodic map $\cT: X \rightarrow X$ is {\it exact} with respect to a measure $\mu$ if, and only if,  for every positive measure set $\Omega \subset  X$, there exists a positive integer $k$ which depends on  the set $\Omega$, such that the measure of the intersection $\cT^{k+1}(\Omega) \cap \cT^{k}(\Omega)$ is positive. The exactness property of an $n$-dimensional homogeneous algorithm implies that suitable projections of the map also bare this property. In particular, the radial projection of the algorithm on the  $(n-1)$-dimensional simplex will also be exact. Moreover, this approach fits also suitable accelerations of homogeneous algorithms, such as multiplicative Jacobi-Perron, Brun, Selmer or Poincar\'{e} algorithms.

As an example one may consider the radial projection of the Rauzy induction algorithm on the simplex, which is a conservative ergodic map. Zorich \cite{zorich} introduced an acceleration of this projection and proved that the resulting map, denoted by $\mathcal{G}$, admits a finite invariant measure. Later Bufetov \cite{bufetov} showed that the map $\mathcal{G}^2=\mathcal{G} \circ \mathcal{G}$ is exact and used this property to prove a result on the decay of correlations for the map $\mathcal{G}^2$. On the other hand, the map $\mathcal{G}$ is not exact. The domain $\Delta$ on which $\mathcal{G}$ acts splits into two disjoint non trivial sets, namely $\Delta_+$ and $\Delta_-$, such that $\mathcal{G}(\Delta_+)=\Delta_-$ and $\mathcal{G}(\Delta_-)=\Delta_+$. 

In order to illustrate our approach, we begin our analysis with the  Euclidean algorithm defined as follows. Let $\R^2_+=\{(\la_1,\la_2)\in\R^2 : \la_1\geq 0, \la_2\geq 0\}$ and consider the map $\cE:\R^2_+\to\R^2_+$ given by
\be \label{E1} 
\cE(\la_1,\la_2)=\left\{  \begin{array}{l l l} (\la_1-\la_2,\la_2) & \text{, if} & \la_1\geq \la_2, \\ (\la_1,\la_2-\la_1) & \text{, if} & \la_1<\la_2. \end{array}\right.
\ee
Other classical versions of the Euclidean algorithm are defined at the end of Section 4. Since $\cE$ is piecewise linear, we may describe its dynamics in matrix notation. Let  
\be \label{B-matrix}
B_1 = \begin{pmatrix} 1 &1 \\ 0 & 1\end{pmatrix}   \ \ \text{and} \ \ B_2=\begin{pmatrix}1 & 0\\ 1 & 1\end{pmatrix},
\ee
be the two elementary matrices which generate the group $SL(2,\Z)$. The definition (\ref{E1}) may be rewritten as 
\be \label{E2}
\cE: \la=\begin{pmatrix} \la_1 \\ \la_2\end{pmatrix}  \mapsto \left\{  \begin{array}{l l l} B^{-1}_1 \la & \text{, if} & \la_1\geq \la_2, \\ B^{-1}_2 \la & \text{, if} & \la_1<\la_2. \end{array}\right.
\ee
The above expression relates the dynamics  of $\cE$ to the linear action of $SL(2,\Z)$. 

\begin{namethm}[\cite{nogueira}, Proposition 8.1]
Let $\la\in \R^2_+$, then
$$
\cup_{m=0}^{\infty}\cup_{k=0}^{\infty} \cE^{-m}(\{\cE^{k}(\la)\})= \R^2_+ \cap SL(2,\Z)\{ \la\}.
$$ \end{namethm}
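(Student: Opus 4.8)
The plan is to read the displayed identity as an equality of two equivalence classes of $\la$: the left-hand side is the \emph{tail-equivalence class} of $\la$ under $\cE$ (the set of points sharing a forward iterate with $\la$), while the right-hand side is the $SL(2,\Z)$-orbit of $\la$ intersected with $\R^2_+$. I would prove the two inclusions separately. The inclusion ``$\subseteq$'' is immediate from the matrix form (\ref{E2}): each application of $\cE$ is left multiplication by $B_1^{-1}$ or $B_2^{-1}$, so by induction $\cE^k(\la) = W^{-1}\la$ and $\cE^m(\mu) = V^{-1}\mu$ for finite products $W,V$ of $B_1,B_2$. If $\mu$ lies in the left-hand side, then $\cE^m(\mu) = \cE^k(\la)$ for some $m,k$, hence $\mu = VW^{-1}\la \in SL(2,\Z)\{\la\}$, and $\mu \in \R^2_+$ by construction.

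The inclusion ``$\supseteq$'' is the substance. Given $\mu = M\la \in \R^2_+$ with $M\in SL(2,\Z)$, I must produce $m,k$ with $\cE^m(\mu)=\cE^k(\la)$. The basic tool is the reduction identity $\cE(B_i\nu)=\nu$ ($i=1,2$), valid for $\nu$ with positive coordinates: multiplication by a positive word in $B_1,B_2$ is undone by iterating $\cE$, which lets me ascend and descend the Stern--Brocot/continued-fraction tree while staying in $\R^2_+$. Since $\la_1+\la_2$ strictly decreases under $\cE$ while both coordinates are positive, I split according to the slope $\la_1/\la_2$. If $\la_1/\la_2\in\mathbb{Q}$ (or $\la$ lies on an axis), the reduction terminates: both $\la$ and $\mu$ are driven to a point on a coordinate axis, and $SL(2,\Z)$-equivalence forces the same terminal vector, so the forward orbits meet (the only delicate point being the tie-breaking at $\la_1=\la_2$, which must be read consistently with (\ref{E1})).

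If $\la_1/\la_2$ is irrational the reduction never terminates, and the sequences $\cE^k(\la)$ and $\cE^m(\mu)$ track the continued-fraction expansions of the two slopes. Here the key input is Serret's theorem: two $SL(2,\Z)$-equivalent irrationals have continued-fraction expansions with a common tail. Through the reduction identity this gives indices $k_0,m_0$ after which the expansions agree digit for digit, so $x:=\cE^{k_0}(\la)$ and $\cE^{m_0}(\mu)$ are positive vectors of equal slope, say $\cE^{m_0}(\mu)=c\,x$ with $c>0$. Removing this scalar $c$ is, I expect, the main obstacle: the identity $\cE^{m_0}(\mu)=cx$ means the integer matrix $Q\in SL(2,\Z)$ relating the two reductions has $x$ as an eigenvector with eigenvalue $c$.

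To finish I would use the (a)periodicity dichotomy of the continued fraction. If $\la$ has non-quadratic irrational slope, an element of $SL(2,\Z)$ cannot fix the direction of $x$ unless $Q=I$ and $c=1$ (the fixed slope of a nontrivial $Q$ would be a quadratic irrational), so $\cE^{k_0}(\la)=\cE^{m_0}(\mu)$ directly. If $\la$ has quadratic slope, its expansion is eventually periodic; reducing by one further full period rescales the reduced vector by the corresponding fundamental unit, and inserting the right number of extra periods on the two sides absorbs $c$, again producing a common value. In all cases $\la$ and $\mu$ share a forward iterate, which establishes ``$\supseteq$''. The conceptual heart is thus the passage from slope-equivalence to genuine vector-equality, i.e.\ the elimination of the scale factor $c$, with the quadratic case handled by \emph{periodicity} and the generic case by the \emph{rigidity} of the $SL(2,\Z)$-action on irrational directions.
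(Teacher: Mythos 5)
The paper does not prove this statement: it is imported verbatim from \cite{nogueira} (Proposition 8.1) and used as a black box to deduce ergodicity of $\cE$, so there is no in-paper argument to compare yours against. Judged on its own terms, your outline identifies the right structure. The inclusion ``$\subseteq$'' via $\cE^m(\mu)=V^{-1}\mu$, $\cE^k(\la)=W^{-1}\la$ and $\mu=VW^{-1}\la$ is correct and complete. For ``$\supseteq$'' you correctly isolate the real difficulty, namely passing from equality of slopes (which is all that Serret's theorem gives) to equality of vectors, i.e.\ eliminating the scalar $c$ in $\cE^{m_0}(\mu)=c\,\cE^{k_0}(\la)$; and your dichotomy is the right one: for non-quadratic irrational slope the eigenvector equation $c'x^2+(d-a)x-b=0$ forces $Q=\pm I$ hence $c=1$, while for quadratic slope $c$ lies in the (infinite cyclic mod $\pm I$) stabilizer's eigenvalue group, which is generated by the eigenvalue of the continued-fraction period matrix, so it is absorbed by inserting extra periods on one side. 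Two points would need to be made honest in a full write-up: (i) Serret gives tail-agreement of the digit sequences, and you must convert this into the statement that from suitable indices on the $\cE$-itineraries of $\la$ and $\mu$ coincide literally --- this does follow, because the entire future itinerary of a point is a function of its current slope, but it is the step where the homogeneous dynamics and the Gauss-map dynamics are actually glued together; (ii) the quadratic case rests on the classical fact that the stabilizer of a quadratic irrational direction in $PGL(2,\Z)$ is generated by the period matrix of its continued fraction, which you should cite or prove. Finally, note that on the boundary the statement as transcribed literally fails: $(t,0)$ and $(0,t)$ are $SL(2,\Z)$-equivalent fixed points of $\cE$ with disjoint grand orbits, so the proposition must be read for $\la$ in the open cone (a null set of exceptions, irrelevant to the ergodicity application); your remark that the rational and axis cases are merely ``delicate'' slightly understates this.
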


In particular, the ergodicity of the action of $SL(2,\Z)$ on $\R^2$ implies the ergodicity of $\cE$ with respect to Lebesgue measure. Here we prove the following.

\begin{thm} \label{euclid} The Euclidean algorithm is exact with respect to Lebesgue measure. \end{thm}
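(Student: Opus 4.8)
The plan is to exploit the homogeneity of $\cE$, namely $\cE(t\la)=t\,\cE(\la)$ for every $t>0$, which lets the map factor as a skew product over its radial projection. Writing $\la=(s(1-x),sx)$ with $s=\la_1+\la_2>0$ and $x=\la_2/s\in[0,1]$, a direct computation turns (\ref{E1}) into the Farey map $F(x)=x/(1-x)$ for $x\le \tfrac12$ and $F(x)=2-1/x$ for $x>\tfrac12$ on the base, together with the radial cocycle $s\mapsto s\,\phi(x)$, where $\phi(x)=\max\{x,1-x\}\in[\tfrac12,1)$. Both branches of $F$ are full, each mapping onto $[0,1]$, and $\cE$ is nonsingular with Jacobian $1$ on each branch, so Lebesgue measure $d\la_1\,d\la_2=s\,ds\,dx$ is governed by the transfer operator $Lf(y)=f(B_1y)+f(B_2y)$ (the two $\cE$-preimages of $y$ are $B_1y$ and $B_2y$). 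In particular $\cE^{k}(\Om)$ coincides mod $0$ with the support of $L^{k}\mathbf 1_{\Om}$, and $\cE^{k+1}(\Om)\cap\cE^{k}(\Om)$ has positive measure as soon as a positive-measure set of $y\in\cE^{k}(\Om)$ satisfies $B_1y\in\cE^{k}(\Om)$ or $B_2y\in\cE^{k}(\Om)$.

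I would therefore track the two coordinates of $A_k:=\cE^{k}(\Om)$ separately: its directional shadow $\pi(A_k)\subseteq[0,1]$, which evolves under $F$, and, over each direction $x'$, its radial fibre, a union of intervals $(a\,\psi_k(x),b\,\psi_k(x))$ indexed by the preimages $x\in F^{-k}(x')$, where $\psi_k(x)=\prod_{j=0}^{k-1}\phi(F^{j}x)$. The directional step is to show that these shadows spread: since $F$ is a full-branch map that is conservative, ergodic and admits the absolutely continuous invariant density $1/x$, it is exact, so $\pi(A_k)$ exhausts $[0,1]$ mod $0$ and, for large $k$, $\pi(A_{k+1})\cap\pi(A_k)$ has full measure. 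I would obtain this through the standard Lebesgue-density and bounded-distortion mechanism on cylinders of $F$ that stay away from the indifferent fixed point $x=0$, where $F^{k}$ restricts to a M\"obius bijection onto $[0,1]$ of controlled distortion.

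The radial step then uses that $\phi\in[\tfrac12,1)$: passing from time $k$ to $k+1$ multiplies each radial scale by a factor in $[\tfrac12,1)$, so two consecutive radial fibres over a common direction necessarily overlap once their radial extent exceeds a factor $2$. I would show that this radial thickness is automatic for large $k$, because the roughly $2^{k}$ preimages $x\in F^{-k}(x')$ produce scales $\psi_k(x)$ that fill an increasingly wide range, fattening the fibres of $A_k$. Combining the directional overlap of $\pi(A_k)$ and $\pi(A_{k+1})$ with the radial overlap of their fibres produces a positive-measure intersection $\cE^{k+1}(\Om)\cap\cE^{k}(\Om)$, which is precisely exactness in the sense defined above.

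The main obstacle is the interaction between the dissipative radial coordinate and the indifferent fixed point of $F$ at $x=0$: there $\phi$ approaches $1$ and the distortion of $F^{k}$ is unbounded, so both the directional covering and the radial fattening degrade exactly along orbits that linger near $0$. Controlling this is the crux: I would excise a neighbourhood of $0$, invoke conservativity of $F$ to ensure that almost every orbit leaves it infinitely often, and quantify the resulting spread of $\psi_k$. The $SL(2,\Z)$-structure supplied by the cited proposition enters to exclude any residual periodic obstruction of the type that makes $\cG$ fail to be exact, guaranteeing that the overlapping directions cannot be confined to a proper $F$-invariant coloring.
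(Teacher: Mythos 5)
Your skew-product reduction is correct: in the coordinates $s=\la_1+\la_2$, $x=\la_2/s$ the map $\cE$ does become the Farey map $F$ on the base with radial cocycle $s\mapsto s\,\phi(x)$, $\phi=\max\{x,1-x\}$, and this is a genuinely different framing from the paper, which never normalizes and works with the homogeneous Markov cones $\cP^{(k)}$ directly. The difficulty is that your radial step --- the place where the dissipativity actually lives --- does not go through as stated. Writing $E_w$ for the radial fibre of $\Om$ over the direction $w$, the fibre of $\cE^{k+1}(\Om)$ over a direction $y$ is $\bigcup_{w\in F^{-(k+1)}(y)}\psi_{k+1}(w)E_w$, while the fibre of $\cE^{k}(\Om)$ over the same $y$ is $\bigcup_{v\in F^{-k}(y)}\psi_{k}(v)E_v$: the first is built from data over the $F^{-k}$-preimages of the \emph{two $F$-preimages of $y$}, not over $F^{-k}(y)$ itself. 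So ``multiplying each radial scale by a factor in $[1/2,1)$'' compares the time-$(k+1)$ fibre over $y$ with the time-$k$ fibres over $F^{-1}(y)$, which are a priori unrelated to the time-$k$ fibre over $y$; thick but disjoint fibres are not excluded. To make the factor-$2$ overlap argument work you would need the time-$k$ fibres to contain a common interval $(a,b)$ with $b>2a$, locally uniformly in the direction, and that interval must be produced dynamically: each individual contribution $\psi_k(z)E_z$ coming from the density neighbourhood of $\la^0$ has multiplicative width $1+O(\rho)$, so you would have to show that the scales $\psi_k(z)$, for the $z\in F^{-k}(y)$ landing in that neighbourhood, are $(1+O(\rho))$-dense in a multiplicative range of ratio greater than $2$. ``Filling an increasingly wide range'' is not enough --- the union could be a sparse collection of thin annuli --- and this density statement about the sums $\sum_j\log\phi(F^jz)$ over the preimage tree is precisely the hard content of the theorem; it is asserted, not proved.

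It is instructive to see how the paper sidesteps this. Rather than aggregating over all $2^k$ inverse branches, it follows the single Markov cone $C^{(s)}_{\la^0}$ through the density point and uses part 2 of Theorem \ref{cones} to pick a time $s$ at which that one cone is $N$-distorted. Then $\cE^s$ maps the small quadrilateral $Q=Q_n\cap C^{(s)}$, with determinant one, onto a trapezoid $T$ squashed against the $\la_2$-axis with $\beta_2/\beta_1\geq N\cos\phi_n$; consequently $T_+=T\cap\{\la_2\geq\la_1\}$ carries almost all of $\mu(T)$, and one further application of $\cE$, which on $T_+$ is $(\la_1,\la_2)\mapsto(\la_1,\la_2-\la_1)$, moves $T_+$ so little that $\mu(\cE(T_+)\cap T_+)\geq\frac12\mu(T_+)$. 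The needed spread of scales is thus harvested pointwise from one inverse branch, where it is a classical fact about continued-fraction partial quotients, instead of from the statistics of the whole preimage tree. To salvage your route you would either import this single-branch distortion mechanism (at which point you have essentially reconstructed the paper's proof in projective coordinates) or prove the fibre-filling statement directly, which appears substantially harder than the theorem itself.
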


Next we consider the Rauzy induction algorithm which acts on the space of interval exchange transformations. To an $n$-interval exchange one associates a first return map induced on a suitable subinterval, which is itself an exchange of $n$ intervals. The aim of the process is to relate the dynamics of an interval exchange to the dynamics of a class of interval exchange maps. Later it was noticed that the Rauzy induction allows the suspension of an interval exchange and one may define a flow on the resulting surface, which is related to the so-called $Teichmuller$ $flow$. Due to the work of Veech \cite{veech1}, and other, the Rauzy induction became a central tool in the dynamical  study of the interval exchange transformations. 

It is known that the Rauzy induction is dissipative and ergodic \cite{veech2}. Here we prove the following result.

\begin{thm} \label{rauzy} The Rauzy induction algorithm acting on the space of interval exchange transformations is exact with respect to Lebesgue measure.\end{thm}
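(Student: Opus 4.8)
The plan is to follow the proof of Theorem~\ref{euclid}, exploiting that the Rauzy induction $\cR$ has the same formal anatomy as the Euclidean algorithm $\cE$: it is a homogeneous, piecewise-linear map whose branches are inverses of nonnegative integer matrices. Recall that a point of its domain is a pair $(\pi,\la)$ with $\pi$ a permutation in a fixed Rauzy class $\cC$ and $\la\in\R^n_+$ a length vector, and that on each of its two branches $\cR$ acts by $(\pi,\la)\mapsto(\pi',A^{-1}\la)$, where $\pi'$ is the successor of $\pi$ in the Rauzy diagram and $A=A(\pi,\cdot)$ is an elementary nonnegative matrix. Thus $\cR$ is Markov: to a finite path $\gamma$ of length $j$ issuing from $\pi$ there corresponds the cylinder $\{\pi\}\times M_\gamma(\R^n_+)$, where $M_\gamma$ is the product of the matrices read along $\gamma$, and $\cR^j$ carries it linearly and bijectively onto the full cone $\{\pi_j\}\times\R^n_+$ over the terminal permutation. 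These cones are the precise counterparts of the sectors $B_{i_1}\cdots B_{i_j}(\R^2_+)$ attached to $\cE$, and the structural description of $\cR$-orbits through the Rauzy--Veech monoid plays the role of the quoted Proposition~8.1; combined with \cite{veech2} it furnishes the ergodicity of $\cR$.

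With this dictionary I would verify the criterion directly: given $\Omega$ of positive measure, I must exhibit $k$ with $\mu\bigl(\cR^{k+1}(\Omega)\cap\cR^{k}(\Omega)\bigr)>0$. The argument splits into a projective and a radial part, in keeping with the homogeneity $\cR(t\la)=t\,\cR(\la)$, which lets one write $\la=r\,\omega$ with $\omega$ in the simplex $\Delta$ and conjugate $\cR$ to a skew product $(\omega,s)\mapsto(\bar\cR\,\omega,\;s+\phi(\omega))$ on $\Delta\times\R$, $s=\log r$, whose fibre cocycle $\phi$ is \emph{bounded}, since a single step multiplies $\|\la\|_1$ by a factor in $[\tfrac12,1)$. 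For the projective part I would first use the Lebesgue density theorem and the fact that the cylinders generate the $\sigma$-algebra and shrink, in the $\omega$-direction, to points: after replacing $\Omega$ by a forward image (which only shifts the eventual $k$) I may assume its projection $\overline{\Omega}$ has almost full density in some cylinder, and then that the successive projections $\bar\cR^{\,m}(\overline{\Omega})$ invade an entire simplex cylinder.

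This projective overlap is forced by the combinatorics of the diagram together with the nonnegativity of the matrices, through the nesting already visible for $\cE$ in the inclusions $B_1^{\,j}(\R^2_+)\subset B_1^{\,j-1}(\R^2_+)$. Because each elementary matrix maps $\R^n_+$ into itself, $\bar\cR$ sends the full simplex over $\pi$ onto the union of the full simplices over the two successors of $\pi$; hence, once $\bar\cR^{\,m}(\overline{\Omega})$ covers the simplices over two permutations $\rho,\sigma$ joined by an edge $\rho\to\sigma$, one more application of $\bar\cR$ again covers the simplex over $\sigma$, and the two consecutive projective images overlap. Reaching such an adjacent pair from a single filled simplex is precisely the strong connectivity of the Rauzy class, a theorem of Rauzy and Veech \cite{veech1}, while the strict positivity of the matrix attached to a complete loop guarantees that the filling is by genuine simplices and not by degenerate subsimplices.

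The step I expect to be the main obstacle is this opening-up, for two independent reasons. Combinatorially, whereas the Rauzy diagram of $\cE$ is a single vertex bearing two loops, a general Rauzy class is an intricate strongly connected graph, and the existence of complete, strictly positive loops must be invoked; this is where the analysis genuinely leaves the model example behind. Analytically, the measure is infinite and the dynamics dissipative: $\cR$ is norm-nonincreasing, so the forward images of a bounded set drift towards the origin and never literally fill an unbounded cone. The opening-up must therefore be established in the projective base and then transported upward through the bounded radial cocycle $\phi$, the point being that one additional step displaces the log-radius $s$ by a bounded amount, so that fibres which have already spread over a long interval still meet their translates. Making this radial transport rigorous---so that the consecutive images overlap in the full cone and not merely in $\Delta$---is the technical core, and it is carried out exactly as in the proof of Theorem~\ref{euclid}.
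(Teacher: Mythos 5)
Your framework (the intersection criterion of Lemma~\ref{exact-lem}, the Markov cylinder cones, ergodicity from \cite{veech2}, strong connectivity of the Rauzy class) matches the paper's, but the two central steps you propose do not go through as stated. First, the ``opening-up'': forward images of a positive-measure set never literally ``invade an entire simplex cylinder''---they only attain high density there, and to propagate a density estimate through the linear maps $A^{(k)}_{\la}$, whose distortion is unbounded along a typical orbit, one needs the balanced-recurrence result of Kerckhoff (\cite{kerckhoff}, Corollary~1.9, quoted as Lemma~\ref{ker}): almost every $\la$ returns infinitely often to the base permutation inside a cone whose column-vectors have uniformly comparable $\ell^1$-norms. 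Your proposal never invokes this, and without it the claim that two consecutive projective images overlap on a set of positive measure has no support.

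Second, and more seriously, the radial overlap does not follow from the boundedness of the log-norm cocycle. The set $\Omega$ is concentrated near a single radius, so $\cI^{k}(\Omega)$ has small radial extent over each projective point, while one further application of $\cI$ displaces the radius by an amount of the same order; boundedness of $\phi$ alone therefore gives nothing. The paper's actual mechanism is different: it first proves (Lemmas~\ref{loop} and~\ref{7.3}, via Rauzy's standard permutations) that every Rauzy class contains a \emph{loop permutation} $\pi$ with $\pi(n-1)=n$, so that the $''$-branch fixes $\pi$ and acts on the pair $(\la_{n-1},\la_n)$ exactly as the Euclidean algorithm. Iterating this self-loop $N$ times manufactures cones with $\Vert l_n\Vert_1/\Vert l_{n-1}\Vert_1>N$ (Lemma~\ref{part}); the image slice $P$ is then so lopsided in the $(\la_{n-1},\la_n)$-plane that $P_+=\{\la\in P:\la_{n-1}>\la_n\}$ carries proportion $\geq N/(N+2)$ of $\mu(P)$ and $\cI(P_+)\cap P_+$ retains at least half of $\mu(P_+)$, by the two-dimensional computation of Lemma~\ref{intersectionlem}. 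The self-loop is also what keeps $\cI(P_+\times\{\pi\})$ over the \emph{same} permutation $\pi$, without which the two consecutive images could not meet at all. These combinatorial ingredients (existence of loop permutations) and the engineered distortion are the genuinely new content beyond the $n=2$ case, and they are absent from your argument; deferring them to ``exactly as in the proof of Theorem~\ref{euclid}'' leaves the proof incomplete.
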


Since the Rauzy induction on the space of $2$-interval exchanges coincides with the Euclidean algorithm, the statement of Theorem \ref{euclid} may be seen as a particular case of the last theorem. However, we decided to treat it independently, since the presentation of the proof becomes more transparent and the techniques involved extend rather easily to the multidimensional case. 

There is a generalization of the Rauzy induction acting on the space of linear involutions (see Danthony and Nogueira \cite{danthony}). Therefore it is natural to conjecture that this transformation is exact as well. This places the exactness property in the context of measured foliations on orientable surfaces. 
\\

The article is organized in the following way. Section 2 concernes the  exactness property. We prove a technical lemma which contains an exactness criterium for ergodic maps. The criterium is general and suits the class of multidimensional continued fraction algorithms.  In Section 3, we present the main dynamical features of the Euclidean algorithm which are exploited in Section 4 to prove that $\cE$ is an exact map - Theorem \ref{euclid}.  It is known that iterations of $\cE$ generate natural Markov partitions of the cone $ \R^2_+$ into subcones. Basically, we prove that we can extract new partitions whose subcones have distortions as large as we want. At the end of Section 4 we introduce two alternative versions of the Euclidean algorithm which are shown to be exact as well. In Section 5 we apply Theorem \ref{euclid} to show exactness of two maps which are conjugate to the Euclidean algorithm. In Section 6 we define interval exchange transformations and the inducing process for interval exchanges, called Rauzy induction. It may be seen as an algorithm acting on copies of the positive cone of $\R^n$. Section 7 is devoted to the study of Rauzy classes of permutations, in particular we prove that every Rauzy graph has a loop, a central fact exploited in the proof of our main theorem. In Section 8, we prove that the Rauzy induction defines suitable partitions of the positive cone $\R^n_+$ which bare similar properties as those defined by the Euclidean algorithm. In Section 9 we  adapt the argument developed in Section 4 to the multidimensional case in order to prove our main result,  Theorem \ref{rauzy}. In the last section we give examples  of other exact  multidimensional continued fraction algorithms which are adapted to the approach developed in our work.

\section{Exactness criterium}

Let $(X,\cB, \mu)$ be a measure space and let $\cT:X\to X$ be a measurable transformation. The map $\cT$ is said to be {\it ergodic} with respect to $\mu$, if for every $\Omega \in \mathcal{B}$ such that $\cT^{-1}(\Omega)=\Omega$, $\mu(\Omega)=0$ or $\mu(X \backslash \Omega)=0$. The map $\cT$ is said to be {\it nonsingular}, if for $\Omega\in\cB$, $\mu(\cT^{-1}(\Omega))=0$ if, and only if, $\mu(\Omega)=0$. The map $\cT$ is said to be {\it exact}, if $\displaystyle \Omega \in \cap_{m\geq 1} \cT^{-m}(\mathcal{B})$ implies $\mu(\Omega)=0$ or $\mu(X \backslash \Omega)=0$. For more information about exact maps one may refer to \cite[Section 1.2]{aaronson}. The  measurable map $\cT$ is said to be {\it bi-measurable}, if, for every $\Omega \in \mathcal{B}$, we have $\cT(\Omega)\in \mathcal{B}$. 

In order to study exactness we introduce an additional dynamical property:   the bi-measurable map $\cT$ satisfies the {\it intersection property}, with respect to the measure $\mu$ if, for every $\Omega \in \mathcal{B}$ with $\mu(\Omega)>0$, there exists $k=k(\Omega)\geq 1$ such that $\mu(\cT^{k+1} (\Omega) \cap \cT^{k} (\Omega)) > 0$.

The next technical lemma whose proof is adapted from \cite[p.11]{rohlin} (see also \cite{bruin-hawkins}) establishes the equivalence between exactness and the intersection property for nonsingular ergodic bi-measurable maps.

\begin{lemma}  \label{exact-lem} Let $(X,\mathcal{B},\mu)$ be a measure space and let $\cT:X\to X$ be bi-measurable, nonsingular and ergodic. Then  $\cT$ is exact if, and only if, it satisfies the intersection property.\end{lemma}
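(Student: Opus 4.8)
The plan is to run both implications through the tail $\sigma$-algebra $\cB_\infty := \bigcap_{m\geq 1}\cT^{-m}(\cB)$, since by definition $\cT$ is exact precisely when $\cB_\infty$ is trivial, i.e. every $\Omega\in\cB_\infty$ has $\mu(\Omega)=0$ or $\mu(X\setminus\Omega)=0$. The bridge between this algebra and the intersection property is an elementary observation I would record first. If $\Omega\in\cB_\infty$, then for each $m$ there is $A_m\in\cB$ with $\Omega=\cT^{-m}(A_m)$; nonsingularity forces the coherence relations $A_m=\cT^{-1}(A_{m+1})$ modulo null sets, and, since $\cT^{-m}$ commutes with complementation, $\cT^m(\Omega)\subseteq A_m$ while $\cT^m(X\setminus\Omega)\subseteq X\setminus A_m$. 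Thus the forward images $\cT^m(\Omega)$ and $\cT^m(X\setminus\Omega)$ are essentially disjoint for every $m$. This is exactly the mechanism behind the non-exact example $\cG$ recalled in the introduction, where $\Omega=\Delta_+$ is a tail set with $\cG(\Delta_+)=\Delta_-$, so its consecutive images are disjoint and the intersection property fails.

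For the implication that exactness implies the intersection property, it suffices to establish the overlap statement that for any $A,B\in\cB$ of positive measure there is an index $k$ with $\mu(\cT^{k}(A)\cap\cT^{k}(B))>0$; applying this to $A=\cT(\Omega)$ and $B=\Omega$ then yields $\mu(\cT^{k+1}(\Omega)\cap\cT^{k}(\Omega))>0$. I would prove the overlap statement by contraposition: if $\mu(\cT^{k}(A)\cap\cT^{k}(B))=0$ for all $k$, consider the saturations $\widehat A_k:=\cT^{-k}(\cT^{k}(A))$ and $\widehat B_k:=\cT^{-k}(\cT^{k}(B))$. These increase with $k$ (because $\cT^k(A)\subseteq\cT^{-1}(\cT^{k+1}(A))$), they remain disjoint for every $k$ since $\widehat A_k\cap\widehat B_k=\cT^{-k}(\cT^{k}(A)\cap\cT^{k}(B))$ is null by nonsingularity, and each $\widehat A_k$ with $k\geq m$ lies in $\cT^{-m}(\cB)$. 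Hence their increasing unions $\widehat A_\infty$ and $\widehat B_\infty$ both belong to $\cB_\infty$, are disjoint, and have positive measure, so $\widehat A_\infty$ is a tail set with $\mu(\widehat A_\infty)>0$ and $\mu(X\setminus\widehat A_\infty)\geq\mu(\widehat B_\infty)>0$, contradicting exactness. This step is measure-agnostic and so survives the passage to infinite Lebesgue measure.

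For the converse, that the intersection property implies exactness, I would again argue by contraposition. Assuming $\cT$ is not exact, fix $\Omega\in\cB_\infty$ with $\mu(\Omega)>0$ and $\mu(X\setminus\Omega)>0$. One checks that $\cB_\infty$ is $\cT$-invariant ($\cT^{-1}(\cB_\infty)=\cB_\infty$) and that $\cT^{-1}$ is invertible on the associated measure algebra, so $\cT$ induces an invertible, ergodic transformation $S$ on $\cB_\infty$; ergodicity descends from $\cT$. Since this factor is nontrivial, I would extract a positive-measure set $F\in\cB_\infty$ with $SF\cap F$ null: in the periodic case one takes a single atom of the cyclic decomposition (the $\Delta_+,\Delta_-$ situation), and in the aperiodic case one invokes a Rohlin tower to obtain two disjoint consecutive levels. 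Writing $F=\cT^{-k}(A^F_k)$, disjointness in the factor means $A^F_{k+1}\cap A^F_{k}$ is null, whence $\cT^{k+1}(F)\cap\cT^{k}(F)\subseteq A^F_{k+1}\cap A^F_{k}$ is null for every $k\geq 1$; such an $F$ violates the intersection property, which proves the contrapositive.

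The main obstacle I anticipate lies entirely in this last direction: legitimizing that $\cT$ descends to an invertible ergodic transformation of $\cB_\infty$ and producing from it a set with disjoint consecutive images through the Rohlin-lemma/cyclic-decomposition dichotomy. This is delicate because $\mu$ may be infinite and $\cT$ is only assumed nonsingular rather than measure preserving, so I would lean on bi-measurability to guarantee that forward images of positive-measure sets have positive measure and that the measure-algebra automorphism is well defined, following Rohlin's treatment. The overlap direction, by contrast, I expect to be routine once the saturation construction is in place.
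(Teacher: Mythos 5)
Your forward implication (exactness $\Rightarrow$ intersection property) is correct and is essentially the paper's own argument: the paper forms the single increasing saturation $S=\bigcup_{k\geq 0}\cT^{-k}(\cT^k(\cT(\Omega)))$, notes that it is a tail set of positive measure, hence of full measure by exactness, and intersects it with $\Omega$; your two-set version with $\widehat A_\infty$ and $\widehat B_\infty$ is the same mechanism packaged as an overlap statement, and it goes through.

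The converse is where there is a genuine gap. You reduce everything to producing a positive-measure tail set $F$ with $\mu(SF\cap F)=0$ via the dichotomy ``atom of a cyclic decomposition or Rohlin tower'' for the induced automorphism $S$ of the tail measure algebra. This is the hardest step of your argument, you explicitly leave it as an appeal to Rohlin's machinery, and it cannot be justified under the stated hypotheses: the lemma is posed for an arbitrary measure space with a possibly infinite measure and a merely nonsingular, non-invertible map, whereas the nonsingular Rohlin lemma and the pointwise (not just measure-algebra) realization of the tail factor require a Lebesgue space and a normalization of the measure. Even granting all of that, you would be importing a substantial theorem where an elementary computation suffices. The paper's route: for a tail set $\Omega$ (so $\Omega=\cT^{-n}(\cT^n(\Omega))$ for all $n$) of positive measure, the set $\Lambda=\Omega\setminus\cT(\Omega)$ satisfies $\Lambda=\cT^{-n}\bigl(\cT^{n}(\Omega)\setminus\cT^{n+1}(\Omega)\bigr)$, hence $\cT^{n}(\Lambda)\cap\cT^{n+1}(\Lambda)=\emptyset$ for every $n$, so the intersection property forces $\mu(\Lambda)=0$; the same argument applied to $\cT(\Omega)\setminus\Omega$ gives $\Omega=\cT(\Omega)$ up to null sets, hence $\cT^{-1}(\Omega)=\Omega$ up to null sets, and ergodicity concludes. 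In other words, the set with disjoint consecutive forward images that you are hunting for with Rohlin towers is simply $\Omega\setminus\cT(\Omega)$ (and its mirror), and the point is not to show it has positive measure but to let the intersection property force it to be null.
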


\begin{proof} Assume that $\cT$ is exact. Let $\Omega \in \mathcal{B}$ be of positive measure and consider the following nested sequence of subsets of $X$:
 $$
 \cT(\Omega) \subset \cT^{-1}(\cT(\cT(\Omega)))\subset \ldots  \subset  \cT^{-k}(\cT^{k}(\cT(\Omega))) \subset  \ldots \; .
$$
Let $S=\cup_{k\geq 0} \cT^{-k}(\cT^k(\cT(\Omega)))$. We have 
$$
S=\cup_{k\geq m} \cT^{-k}(\cT^k(\cT(\Omega)))=\cT^{-m}(\cT^{m}(S)), \; \mbox{  for every } \; m \geq 0.
$$ 
Therefore $S \in \cap_{m\geq0}\cT^{-m}(\mathcal{B})$. Moreover, since $\cT(\Omega)\subset S$, by the nonsingularity of $\cT$ we get $\mu(S)>0$. The exactness of $\cT$ implies that $S$ is of full measure in $X$. Since $\mu(\Omega)>0$ there exists $k\geq 1$ such that $\mu(\cT^{-k}(\cT^k(\cT(\Omega))\cap \Omega)>0$. Again, by the nonsingularity of $\cT$, we get
$$\mu(\cT^{k+1}(\Omega)\cap \cT^k(\Omega))>0.$$

Conversely, assume that $\cT$ satisfies the intersection property. Let  $\Omega \in \cap_{m\geq 1} \cT^{-m}(\mathcal{B})$, which is equivalent to $\Omega=\cT^{-m}(\cT^m(\Omega))$ for all $m\geq 1$. Assume that $\mu(\Omega)>0$. In order to show that $\cT$ is exact we have to show that $\Omega$ is of full measure. 

Let $\Lambda=\Omega\setminus \cT(\Omega)$. We have $\Lambda \cap \cT(\Lambda)=\emptyset$. Moreover,  for every $n\geq 1$,
$$
\Lambda=\cT^{-n}(\cT^n(\Omega))\setminus \cT^{-n}(\cT^n(\cT(\Omega)))=\cT^{-n}(\cT^{n}(\Omega)\setminus \cT^{n+1}(\Omega)),$$
we obtain $\cT^n(\Lambda)\cap \cT^{n+1}(\Lambda)=\emptyset$ which implies $\mu(\Lambda)=0$ by the intersection property. We have $\Omega\subset \cT(\Omega)$ up to a null measure set. The same argument applied to $\Lambda'=\cT(\Omega)\setminus \Omega$ gives $\cT(\Omega)\subset \Omega$ which implies $\Omega=\cT(\Omega)$ up to a null measure set. We may write
$$
\cT^{-1}(\Omega)=\cT^{-1}(\cT(\Omega))=\Omega.
$$ 
Since $\cT$ is ergodic and $\Omega$ a positive measure set, we get that it is a full measure set. The map $\cT$ is thus exact.
\end{proof}

In what follows the measure space $(X,\cB,\mu)$ is essentially a positive cone of an Euclidean space $\R^n$ with Borel $\sigma$-algebra and Lebesgue measure $\mu$. The maps considered (the Euclidean algorithm and Rauzy induction) are bi-measurable and nonsingular. Although they do not preserve Lebesgue measure,  they admit invariant measures absolutely continuous with respect to $\mu$.

\section{Euclidean algorithm} 

In this section we recall some results about the dynamics of the Euclidean algorithm that will be used to prove Theorem \ref{euclid}. For more details one may refer to \cite{arnaldo-bb} and references therein. 

Let $\cE: \R^2_+\to \R^2_+$ be the Euclidean algorithm (\ref{E2}). To a point $\la\in\R^2_+$ we may associate a $\{B_1,B_2\}$-valued  (\ref{B-matrix}) sequence of matrices $(B_{m_k})_{k\geq 1}$ such that 
\begin{equation} 
\label{E} \cE^k : \begin{pmatrix} \la_1 \\ \la_2 \end{pmatrix} \mapsto  B^{-1}_{m_k} \cdot\ldots\cdot B^{-1}_{m_1} \begin{pmatrix} \la_1 \\ \la_2 \end{pmatrix}  \ \ \text{for every} \ k\geq 1.
\end{equation}
One may easily verify that (\ref{E}) holds if and only if 
\begin{equation} \label{cone} \begin{pmatrix} \la_1 \\ \la_2\end{pmatrix} \in B_{m_1}\cdot\ldots\cdot B_{m_k} (\R^2_+) \ \ \text{for every} \ k\geq 1. \end{equation}  

The sequence $(B_{m_k})$ is called the {\it expansion} of the point $\la$. This comes from the fact that it is closely related to the continued fraction expansion of the ratio $ \la_1/ \la_2$. Assume this ratio irrational. In such case, the sequence $(B_{m_k})$ contains infinitely many of both matrices $B_1$ and $B_2$. We may define a sequence of integers $(a_i)_{i\geq 0}$ as follows. Let $a_0=0$ if the sequence begins with $B_1$, otherwise let $a_0$ be the number of consecutive matrices $B_2$ at the beginning of the sequence. Next, let $a_1$ be the number of consecutive matrices $B_1$ that follow. Then, define $a_2$ to be the number of consecutive matrices $B_2$ that come next, and so on:
$$(B_{m_k}) = \underbrace{B_2\ldots B_2}_{a_0}  \underbrace{B_1\ldots B_1}_{a_1} \underbrace{B_2\ldots B_2}_{a_2} \underbrace{B_1\ldots B_1}_{a_3}\ldots$$ 
It may be shown that     
$$
\frac{\la_1}{\la_2} =  [a_0;a_1,a_2, \ldots]=a_0+\cfrac{1}{a_1+\cfrac{1}{a_2+\cfrac{1}{\ddots}}} \ .
$$

\subsection{Partitions of $\R^2_+$} 

From (\ref{cone}) we deduce that for every $k\geq 1$, the positive cone $\R^2_+$ is decomposed into $2^k$ subcones, of disjoint nonempty interiors, which correspond to different sequences of elementary matrices involved in the iterations of $\cE$. Let $\mathcal{P}^{(k)}$ be the $k$th partition
\begin{equation} \label{decomposition} \mathcal{P}^{(k)}=\{  B_{m_1}\cdot\ldots\cdot B_{m_k} (\R^2_+) : m_1,\ldots,m_k\in\{1,2\}\}.\end{equation}
For every $k\geq 1$, the partition $\cP^{(k+1)}$ is a refinement of $\cP^{(k)}$. To be more precise, if $C^{(k)}\in \cP^{(k)}$ is defined by a couple of vectors $(l_1,l_2)$, then it generates two elements of $\cP^{(k+1)}$ defined by the couples $(l_1,l_1+l_2)$ and $(l_1+l_2,l_2)$.

Fix $\la\in\R^2_+$ with irrational ratio $\la_1/\la_2$. For every $k\geq 1$, let $C^{(k)}_{\la}\in\mathcal{P}^{(k)}$ be the cone defined by (\ref{cone}). We obtain a nested sequence of subcones of $\R^2_+$ and one may show that the intersection $\cap_{k\geq 1} C^{(k)}_{\la}$ equals the radial line $\{\alpha\la : \alpha\geq 0\}$. For every $k\geq 1$, let $l_1^{(k)}$ and $l_2^{(k)}$ stand for the two column-vectors of the matrix $B_{m_1}\cdot\ldots\cdot B_{m_k}$, generating the corresponding cone $C^{(k)}_{\la}$. From (\ref{E}) we deduce that $\cE^k(C^{(k)}_{\la})=\R^2_+$ and the vectors $l_1^{(k)},l_2^{(k)}$ are sent onto the canonical basis of $\R^2_+$:
\be \label{base}  \cE^k(l_i^{(k)})=e_i, \ i=1,2. \ee 

 The next lemma shows a central property of the family of partitions  $(\mathcal{P}^{(k)})_{k\geq 1}$  of $\R^2_+$. Namely, the angles formed by the column-vectors  $ l_1^{(k)}, l_2^{(k)}$ which define the subcones $C^{(k)}$ go to zero uniformly as $k\to\infty$.

\begin{lemma} \label{compact} 
Let $K$ be a compact measurable subset of $\R^2_+$. Then
$$ 
\lim_{k\to\infty} \max_{C\in\cP^{(k)}} \mu(K\cap C)=0. 
$$
\end{lemma}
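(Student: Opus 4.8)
The plan is to reduce the statement to the geometric fact announced just before the lemma, namely that the opening angles of the cones $C\in\cP^{(k)}$ tend to $0$ uniformly in $C$ as $k\to\infty$, and then to estimate $\mu(K\cap C)$ by the area of a circular sector. First I would record that each $C=B_{m_1}\cdots B_{m_k}(\R^2_+)\in\cP^{(k)}$ is precisely the set of nonnegative combinations of its two generating column vectors $l_1,l_2$, which are integer vectors with nonnegative entries and satisfy $\det(l_1,l_2)=1$ because $B_{m_1}\cdots B_{m_k}\in SL(2,\Z)$. Writing $\theta_C$ for the angle between $l_1$ and $l_2$ (the opening angle of the wedge $C$, which is acute since both vectors lie in $\R^2_+$), the determinant condition yields the identity
\[
\sin\theta_C=\frac{|\det(l_1,l_2)|}{|l_1|\,|l_2|}=\frac{1}{|l_1|\,|l_2|}.
\]
Hence it suffices to prove that $|l_1|\,|l_2|\to\infty$ uniformly over $C\in\cP^{(k)}$.

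Second, I would control the growth of the generators through the $\ell^1$-norm. Since all entries are nonnegative, $\|l_1+l_2\|_1=\|l_1\|_1+\|l_2\|_1$, so the refinement rule $(l_1,l_2)\mapsto(l_1,l_1+l_2)$ or $(l_1,l_2)\mapsto(l_1+l_2,l_2)$ increases $\|l_1\|_1+\|l_2\|_1$ by $\|l_1\|_1\geq 1$ or by $\|l_2\|_1\geq 1$ respectively, the generators being nonzero integer vectors. Starting from $\|e_1\|_1+\|e_2\|_1=2$, induction on $k$ gives $\|l_1\|_1+\|l_2\|_1\geq k+2$ for every $C\in\cP^{(k)}$. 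Thus at least one generator has norm of order $k$ while the other has norm $\geq 1$, so $|l_1|\,|l_2|\gtrsim k$ and consequently $\theta_C=\arcsin\big(1/(|l_1|\,|l_2|)\big)\to 0$ uniformly in $C$.

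Finally, since $K$ is compact it is contained in a ball $B(0,R)$, and the intersection of the wedge $C$ (with apex at the origin) with $B(0,R)$ is a circular sector of radius $R$ and angle $\theta_C$, of area $\tfrac12R^2\theta_C$. Therefore $\mu(K\cap C)\leq\tfrac12R^2\theta_C$ for every $C\in\cP^{(k)}$, and taking the maximum over $C$ and letting $k\to\infty$ gives the claim. I expect the only genuine content to be the uniform lower bound $|l_1|\,|l_2|\gtrsim k$, equivalently the uniform decay of the angles $\theta_C$; the passage from angles to Lebesgue measure is routine once $K$ is confined to a bounded ball. The one point to watch is that the bound must be uniform, so I would be careful that the $\ell^1$ growth estimate holds for \emph{every} cone of $\cP^{(k)}$, including the extremal ones $B_1^k(\R^2_+)$ and $B_2^k(\R^2_+)$ in which a single generator stays of bounded length.
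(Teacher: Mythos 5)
Your proof is correct and rests on exactly the same mechanism as the paper's: the unimodularity of $B_{m_1}\cdots B_{m_k}$ (so the parallelogram spanned by $l_1,l_2$ has area $1$) combined with the linear growth of $\Vert l_1\Vert_1+\Vert l_2\Vert_1$ in $k$, giving $\Vert l_1\Vert_1\Vert l_2\Vert_1\gtrsim k$ uniformly over $\cP^{(k)}$. The only cosmetic difference is that you confine $K$ to a ball and bound $\mu(K\cap C)$ by a circular sector of angle $\theta_C$ with $\sin\theta_C=1/(|l_1|\,|l_2|)$, whereas the paper confines $K$ to the triangle $\Delta(\alpha)=\{\la_1+\la_2\leq\alpha\}$ and computes $\mu(\Delta(\alpha)\cap C)=\tfrac12\alpha^2/(\Vert l_1\Vert_1\Vert l_2\Vert_1)$ exactly.
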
 
\begin{proof} 
Let $K$ be a compact subset of $\R^2_+$. Let $\alpha>0$ be such that 
$K \subset \Delta(\alpha)= \{ \la \in \R^2_+: \la_1+\la_2\leq \alpha\}$. Let $C\in\cP^{(k)}$ and $l_1, l_2$ be the column-vectors of the matrix which defines $C$.  A trivial computation gives 
$$
\mu(\Delta(\alpha)\cap C)=\frac{1}{2}\frac{\alpha^2}{\Vert l_1\Vert_1\Vert l_2\Vert_1}
$$
which implies that
$$
\mu(K\cap C)\leq \frac{1}{2}\frac{\alpha^2}{\Vert l_1\Vert_1\Vert l_2\Vert_1}\leq\frac{\alpha^2}{2(k+1)}.
$$ 
We conclude that
$$
\lim_{k\to\infty} \max_{C\in\cP^{(k)}} \mu(K\cap C)=0. 
$$
\end{proof}

Next, let $\Vert \cdot \Vert$ stand for the Euclidean norm. We have the following result which comes from the continued fraction expansion interpretation of $\cE$.
 
\begin{thm}[\cite{arnaldo-bb}, Section 4]  \label{cones} 
Let $\la\in\R^2_+$ with irrational ratio $\la_1/ \la_2$ and $ l_1^{(k)}, l_2^{(k)}$ be the column-vectors which define $C^{(k)}_{\la}$, $k\geq 1$. Then  the following properties hold: 
\begin{enumerate}
\item There exist infinitely many integers $k\geq1$ such that 
$$
\frac{1}{\theta}\leq \frac{\Vert l_1^{(k)}\Vert}{\Vert l_2^{(k)}\Vert}\leq \theta,
$$  
where $\theta>1$ is a constant independent of $\la$.
\item For every $N>0$, for almost every $\la$ there exist infinitely many integers $s\geq 1$ such that 
$$ \frac{\Vert l_1^{(s)}\Vert}{\Vert l_2^{(s)}\Vert} > N \ \ \text{or} \ \ \frac{\Vert l_2^{(s)}\Vert}{\Vert l_1^{(s)}\Vert} > N.$$
\end{enumerate}
\end{thm}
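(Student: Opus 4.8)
The plan is to track the ratio of the $\ell^1$-norms of the two column vectors as $k$ grows. Since $l_1^{(k)},l_2^{(k)}\in\Z^2$ have nonnegative entries, the Euclidean and $\ell^1$ norms are comparable, $\tfrac{1}{\sqrt 2}\Vert v\Vert_1\le\Vert v\Vert\le\Vert v\Vert_1$, so it suffices to control $\rho_k:=\Vert l_1^{(k)}\Vert_1/\Vert l_2^{(k)}\Vert_1$; the two statements about the Euclidean ratio then follow after absorbing a factor $\sqrt2$ into the constants. By the refinement rule quoted just before Lemma~\ref{compact} — namely $(l_1,l_2)\mapsto(l_1,l_1+l_2)$ under $B_1$ and $(l_1,l_2)\mapsto(l_1+l_2,l_2)$ under $B_2$ — the norm ratio evolves by $\rho\mapsto\rho/(\rho+1)$ when $m_k=1$ and by $\rho\mapsto\rho+1$ when $m_k=2$, starting from $\rho_0=1$. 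This is exactly the subtractive continued fraction dynamics, and I will read both assertions off the run structure of the expansion $(B_{m_k})$.

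For item (1), call an index $k\ge1$ a \emph{switch} if $k=1$ or $m_k\neq m_{k-1}$; since $\la_1/\la_2$ is irrational, both $B_1$ and $B_2$ occur infinitely often, so there are infinitely many switches. I claim $\rho_k\in[1/2,2]$ at every switch. Suppose $m_k=2$ begins a $B_2$-run, so $m_{k-1}=1$ (or $k=1$). Applying $B_1$ at step $k-1$ gives $l_2^{(k-1)}=l_1^{(k-1)}+l_2^{(k-2)}$ with $l_2^{(k-2)}$ nonnegative, whence $\Vert l_2^{(k-1)}\Vert_1\ge\Vert l_1^{(k-1)}\Vert_1$, i.e.\ $\rho_{k-1}\le1$, and therefore $\rho_k=\rho_{k-1}+1\in(1,2]$. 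The case $m_k=1$ is symmetric after replacing $\rho$ by $1/\rho$, giving $\rho_k\in[1/2,1)$. Passing back to the Euclidean norm yields $\Vert l_1^{(k)}\Vert/\Vert l_2^{(k)}\Vert\in[1/(2\sqrt2),\,2\sqrt2]$ at each of the infinitely many switches, which is (1) with $\theta=2\sqrt2$, a constant independent of $\la$.

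For item (2) I look at the \emph{end} of a run rather than its start. Note $\Vert l_i^{(k)}\Vert_1\ge1$ always, so $\rho_k$ stays finite and positive. If a $B_2$-run has length $a$ and begins with ratio $\rho_{\mathrm{start}}>0$, then after the $a$ steps $\rho=\rho_{\mathrm{start}}+a>a$; symmetrically, at the end of a $B_1$-run of length $a$ one has $1/\rho>a$. Hence it suffices to exhibit, for a.e.\ $\la$ and each fixed $N$, infinitely many runs of length $\ge N$ — that is, infinitely many partial quotients $a_i\ge N$ in the expansion of $\la_1/\la_2$. This is the classical metric fact that almost every irrational has unbounded partial quotients: through the continued fraction interpretation of $\cE$, the accelerated map is (conjugate to) the Gauss map, which is ergodic for its invariant measure, and the cylinder $\{a_1>N\}$ has positive measure, so Birkhoff's theorem (or Poincar\'e recurrence) forces $a_i>N$ for infinitely many $i$, for a.e.\ $\la$. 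Taking the countable intersection over $N\in\N$ gives the statement for all $N$ simultaneously almost everywhere.

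I expect the deterministic item (1) to carry the real content, and the one delicate point there is the inequality $\rho_{k-1}\le1$ just before a switch: it is precisely what guarantees that a single application of the new matrix already pulls the ratio back into a fixed compact range, independently of how long the preceding run was. Item (2) is then essentially a translation of a standard fact about continued fractions, so the only care needed is to make the correspondence between run lengths and partial quotients, and between $\cE$ and the Gauss map, precise enough to invoke ergodicity.
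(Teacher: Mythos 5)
Your proof is correct. There is nothing in the paper to compare it against: Theorem \ref{cones} is quoted from \cite{arnaldo-bb} with only the remark that it ``comes from the continued fraction expansion interpretation of $\cE$'', and no argument is given. Your self-contained derivation is exactly in that indicated spirit and it works: the recursion $\rho\mapsto\rho+1$ (under $B_2$) and $\rho\mapsto\rho/(\rho+1)$ (under $B_1$) for $\rho_k=\Vert l_1^{(k)}\Vert_1/\Vert l_2^{(k)}\Vert_1$ is the correct reading of the refinement rule $(l_1,l_2)\mapsto(l_1,l_1+l_2)$ or $(l_1+l_2,l_2)$; the key observation for part (1) -- that just before a switch the ratio is automatically on the correct side of $1$, so a single step of the new run lands it in $[1/2,2]$ regardless of how long the previous run was -- is right and yields the explicit uniform constant $\theta=2\sqrt2$; and reading part (2) off the lengths of runs, i.e.\ off the partial quotients of $\la_1/\la_2$, together with ergodicity of the Gauss map, is the standard route. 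Two small points you should make explicit in a final write-up. First, part (2) concerns almost every $\la\in\R^2_+$ for two-dimensional Lebesgue measure, while the Gauss-map argument produces a full-measure set of ratios $t=\la_1/\la_2$ in $(0,\infty)$; you need the (routine) remark that the preimage of a one-dimensional null set under $\la\mapsto\la_1/\la_2$ is two-dimensional null, e.g.\ by Fubini in the coordinates $(t,\la_2)$. Second, your $\sqrt2$ bookkeeping means that to force the \emph{Euclidean} ratio above $N$ you should run the $\ell^1$ argument with $\sqrt2\,N$ in place of $N$; since $N$ is arbitrary this is harmless, but it deserves one sentence.
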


Next we introduce the notion of distortion of a subcone which will be needed in the next section. Let $C\in\cP^{(k)}$ and $l_1, l_2$ be the column-vectors of the matrix which defines $C$. We call the $distortion$ of $C$ the number
$$
\max\left\{ \frac{\Vert l_1 \Vert}{\Vert l_2\Vert},\frac{\Vert l_2\Vert}{\Vert l_1\Vert}\right\}.
$$ 
The second part of the last theorem implies that there exists a partition of $\R^2_+$ formed only by subcones of $\cup_{ k\geq 1} \mathcal{P}^{(k)}$ whose  distortions are as large as we want.

\begin{cor} \label{distortionthm}
Let $N>1$. Then there exists a partition $\mathcal{P}_N$ of $\R^2_+$ formed by subcones of $\cup_{ k\geq 1} \mathcal{P}^{(k)}$ such that, for every $C\in \mathcal{P}_N$, its distortion is greater than $N$.
\end{cor}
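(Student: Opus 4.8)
The plan is to build $\cP_N$ by a first-entry (stopping time) construction in the binary tree of subcones generated by the refinements $\cP^{(1)},\cP^{(2)},\ldots$. Recall that each cone $C^{(k)}\in\cP^{(k)}$ has exactly two children in $\cP^{(k+1)}$ and a unique chain of ancestors $C^{(k)}\subset C^{(k-1)}\subset\cdots\subset C^{(1)}\subset\R^2_+$; I will call $C'$ a \emph{proper ancestor} of $C$ if it strictly contains $C$ and lies in some $\cP^{(j)}$ with $j$ below the level of $C$. The idea is to stop the refinement along each branch as soon as the distortion first exceeds $N$, and to take the resulting cones as the blocks of $\cP_N$.

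Concretely, I would set
\[
\cP_N=\Big\{\, C\in\textstyle\bigcup_{k\geq1}\cP^{(k)} \ :\ \mathrm{dist}(C)>N \ \text{ and } \ \mathrm{dist}(C')\leq N \ \text{ for every proper ancestor } C' \text{ of } C \,\Big\},
\]
where $\mathrm{dist}(C)$ denotes the distortion of $C$. Equivalently, for $\la\in\R^2_+$ with irrational ratio $\la_1/\la_2$, let $k(\la)$ be the least integer $k\geq1$ with $\mathrm{dist}(C^{(k)}_\la)>N$; then $\cP_N$ is exactly the family of stopped cones $C^{(k(\la))}_\la$. Every element of $\cP_N$ has distortion greater than $N$ by construction, so the only points left to check are that $\cP_N$ is an antichain in the tree (its cones have pairwise disjoint interiors) and that its cones cover $\R^2_+$ up to a set of measure zero.

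First I would verify that the stopping time $k(\la)$ is finite for almost every $\la$: this is precisely the content of Theorem \ref{cones}(2), which guarantees that for almost every $\la$ the distortion of $C^{(s)}_\la$ exceeds $N$ for infinitely many $s$, in particular for at least one, so the minimum defining $k(\la)$ exists. Consequently each such $\la$ lies in its stopped cone $C^{(k(\la))}_\la\in\cP_N$, and since the radial lines of rational ratio together with the exceptional set of Theorem \ref{cones}(2) have measure zero, the union of the cones of $\cP_N$ is all of $\R^2_+$ up to measure zero. For disjointness, suppose two distinct cones $C,\widetilde C\in\cP_N$ had overlapping interiors; as nodes of the refinement tree one would then contain the other, say $C$ is a proper ancestor of $\widetilde C$. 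But $C\in\cP_N$ forces $\mathrm{dist}(C)>N$, contradicting the requirement that all proper ancestors of $\widetilde C$ have distortion at most $N$. Hence the interiors are pairwise disjoint and $\cP_N$ is a genuine (measure-theoretic) partition of $\R^2_+$ into subcones of $\bigcup_{k\geq1}\cP^{(k)}$.

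The construction is otherwise routine; the single place where real input is needed is the finiteness of the stopping time, which is exactly why Theorem \ref{cones}(2) — the almost-everywhere occurrence of arbitrarily large distortions — is indispensable. I would also record that $\bigcup_{k\geq1}\cP^{(k)}$ is a countable family, so $\cP_N$ is a countable partition, and that the word \emph{partition} is understood modulo the measure-zero set of cone boundaries, consistently with the Lebesgue-measure framework used throughout the paper.
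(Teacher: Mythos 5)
Your proposal is correct and follows essentially the same route as the paper: both define $\cP_N$ via the first (minimal) level $k$ at which the distortion of $C^{(k)}_\la$ exceeds $N$, invoke Theorem \ref{cones}(2) for the almost-everywhere finiteness of this stopping time, and derive disjointness from the nested (tree) structure of the partitions $\cP^{(k)}$ together with minimality. Your antichain formulation of disjointness is just a restatement of the paper's minimality argument for $s(\la)$, so there is nothing further to add.
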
 
\begin{proof} From Theorem \ref{cones} we know that for almost every $\la\in\R^2_+$ there exist infinitely many integers $s\geq 1$ such that $C^{(s)}_{\la}$ is a distorted cone. Let $s(\la)$ be the smallest of those integers and define $\cP_N$ to be the collection of all cones $C^{(s(\la))}_{\la}$ obtained this way. We claim that $\cP_N$ is a partition of $\R^2_+$. 

Clearly, the union of all the members of $\cP_N$ covers $\R^2_+$ up to a null measure set. Moreover, let  $C_1$ and $C_2$ be subcones  belonging to the collection $ \cP_N$. We claim that $C_1=C_2$, otherwise $C_1\cap C_2$ is a null measure set. 

Assume $\mu(C_1\cap C_2)>0$. There exist $\la^1$ and $\la^2$ such that $C_i=C^{(s(\la^i))}_{\la^i}, \ i=1,2$. Assume $s_1=s(\la^1)\geq s_2= s(\la^2)$. 
Since the partition $\cP^{(s_1)}$ is a refinement of the partition $\cP^{(s_2)}$ and $C_1$ and $C_2$ have a non trivial intersection, the cone $C_1$ must be contained in $C_2$. This implies $\la^1\in C_2$ and thus $s_1=s_2$ by the definition of $s(\la^1)$. This implies $C_1=C_2$ and proves that $ \cP_N$ is a partition of $\R^2_+$ with the desired properties.
\end{proof}

\section{Proof of Theorem \ref{euclid}}

To prove that the Euclidean algorithm is exact with respect to Lebesgue measure we use Lemma \ref{exact-lem}. The main steps of the proof are the following. 

Let $\Omega \subset \R^2_+$ be a positive Lebesgue measure set and $\la^0$ a density point of $\Omega$. First, we construct a sequence $(Q_n)_{n\geq 1}$ of quadrilateral domains that shrink to $\{\la^0 \}$ as $n\to\infty$. Using a version of Lebesgue density theorem, we show that given $\varepsilon>0$, for every $n$ sufficiently large we have 
\begin{equation} \label{dense} \mu(\Omega \cap Q_n)\geq (1-\varepsilon)\mu(Q_n), \end{equation} where $\mu$ stands for Lebesgue measure. 

Fix $Q_n$ satisfying (\ref{dense}). We will show that a similar density property holds for a smaller quadrilateral $Q$ which is the intersection of $Q_n$ with a subcone coming from a ``distorted'' partition $\cP_N$ given by Corollary \ref{distortionthm}.  

This new quadrilateral $Q$ is the intersection of $Q_n$ with a cone $C^{(s)}$ defined by (\ref{cone}). We know that $\cE^s(C^{(s)})=\R^2_+$, which implies that the vertices of the quadrilateral $\cE^s(Q)$ are fixed by $\cE$. Moreover, it still satisfies a density condition close to (\ref{dense}). Finally we use the distortion property of the cone $C^{(s)}$ to show that the intersection $\cE^s(Q)\cap \cE^{s+1}(Q)$ is large enough to imply $\mu(\cE^{s}(\Omega)\cap \cE^{s+1}(\Omega))>0$.    

The key point of the proof relies on the fact that the value of $N$ can be chosen independently of the values of $\varepsilon$ and $n$.

\subsection{The first sequence of quadrilaterals} 

Let $\Omega \subset \R^2_+$ be a set of positive Lebesgue measure and fix $\la^0=(\la_1^0,\la_2^0)$ a density point of $\Omega$, which is an interior point of $\R^2_+$. For every $n\geq 1$, let $Q_n$ be the quadrilateral (trapezoid) $p_n q_n r_n s_n$ whose vertices are defined as follows (see Figure 1):

\begin{itemize}
\item $p_n=\la^0-\frac{1}{2n}(\la_2^0,-\la_1^0)=(\la_1^0 - \frac{1}{2n}\la_2^0,\la_2^0+\frac{1}{2n}\la_1^0),$
\item $q_n=\la^0+\frac{1}{2n}(\la_2^0,-\la_1^0)=(\la_1^0 + \frac{1}{2n}\la_2^0,\la_2^0-\frac{1}{2n}\la_1^0),$
\item $r_n=(1+\frac{1}{n})q_n$ and  $s_n=(1+\frac{1}{n})p_n$.
\end{itemize}
The point $\la^0$ is the middle point of the segment $p_n q_n$ and the Euclidean distance between the parallel segments $p_n q_n$ and $s_n r_n$ is equal to the length of $p_n q_n$, that is $\frac{1}{n}\Vert \la^0 \Vert$. The nested sequence of quadrilaterals $(Q_n)_{n\geq 1}$ satisfies $\cap_{n\geq 1} Q_n=\{\la^0 \}$. Moreover, for $n$ large enough the quadrilateral $Q_n$ is contained in $\R^2_+$.

\begin{figure}

\begin{picture}(300,220)(0,5)

\put(120 ,35){\mbox{\includegraphics[scale=0.5]{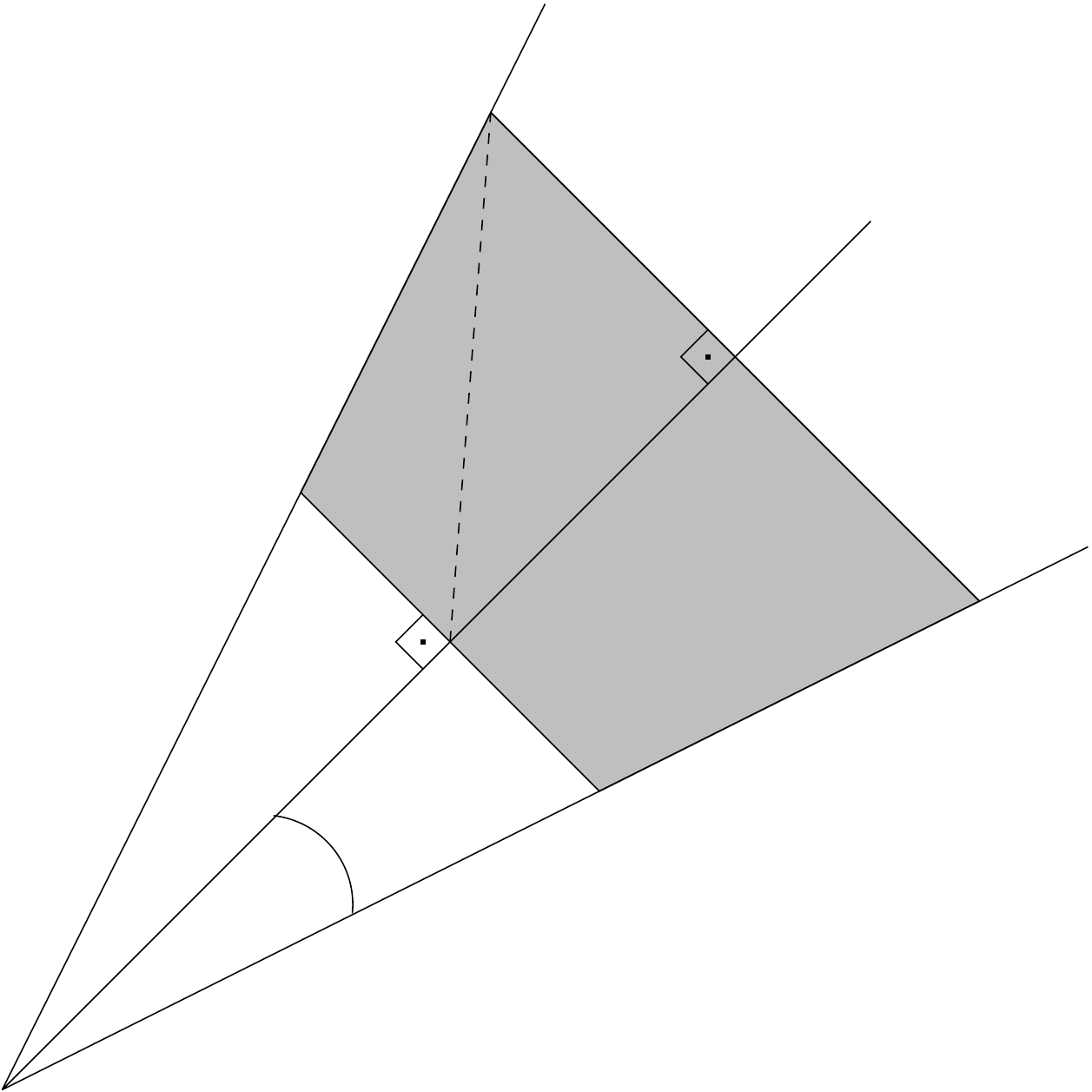}}}

\put(110,30){\mbox{$0$}}

\put(200,107){\mbox{$\la^0$}}
\put(153,135){\mbox{$p_n$}}
\put(220,75){\mbox{$q_n$}}
\put(283,107){\mbox{$r_n$}}
\put(185,200){\mbox{$s_n$}}

\put(157,65){\mbox{$\phi_n$}}

\put(202, 150){\mbox{$\rho_n$}}

\put(140,5){\mbox{Figure 1. Quadrilateral $Q_n$.}}
\end{picture}
\end{figure}

\begin{lemma} 
For every $n\geq 1$ there exists a ball $B(\la^0,\rho_n)$ centered at $\la^0$ of radius $\rho_n$, such that $Q_n\subset B(\la^0, \rho_n)$, $\rho_n\to 0$ as $n\to\infty$ and 
 \begin{equation} 
 \label{ball} \lim_{n\to\infty} \frac{ \mu (Q_n)}{\mu (B(\la^0,\rho_n))} =\frac{4}{5\pi}. 
 \end{equation} \end{lemma}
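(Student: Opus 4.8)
The plan is to introduce orthogonal coordinates adapted to the radial direction, so that both the area of the trapezoid and the radius of the enclosing ball reduce to elementary norm computations. Write $v=\la^0$ and $w=\frac{1}{2n}(\la_2^0,-\la_1^0)$. Then $v\cdot w=\frac{1}{2n}(\la_1^0\la_2^0-\la_2^0\la_1^0)=0$, so $v\perp w$, with $\Vert v\Vert=\Vert\la^0\Vert$ and $\Vert w\Vert=\frac{1}{2n}\Vert\la^0\Vert$. In this notation the four vertices read $p_n=v-w$, $q_n=v+w$, $r_n=(1+\frac1n)(v+w)$ and $s_n=(1+\frac1n)(v-w)$. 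Hence the segments $p_nq_n$ and $s_nr_n$ are both orthogonal to the radial direction $v$ (they point along $w$), while $p_ns_n$ and $q_nr_n$ are radial segments. Thus $Q_n$ is a trapezoid whose parallel sides have lengths $\Vert q_n-p_n\Vert=2\Vert w\Vert=\frac1n\Vert\la^0\Vert$ and $\Vert r_n-s_n\Vert=(1+\frac1n)\frac1n\Vert\la^0\Vert$, with the two supporting lines (through the midpoints $v$ and $(1+\frac1n)v$) separated by the radial height $\frac1n\Vert\la^0\Vert$.

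Next I would compute the two areas. The trapezoid formula gives
$$
\mu(Q_n)=\frac12\left(\frac1n+\left(1+\frac1n\right)\frac1n\right)\frac1n\Vert\la^0\Vert^2=\frac{\Vert\la^0\Vert^2}{n^2}\left(1+\frac{1}{2n}\right).
$$
For the ball I take $\rho_n$ to be the largest distance from $\la^0=v$ to a point of $Q_n$; since $x\mapsto\Vert x-\la^0\Vert^2$ is convex and $Q_n$ is a convex polytope, this maximum is attained at a vertex. Using $v\perp w$ one finds $\Vert p_n-v\Vert=\Vert q_n-v\Vert=\Vert w\Vert=\frac{1}{2n}\Vert\la^0\Vert$, whereas $r_n-v=\frac1n v+(1+\frac1n)w$ and $s_n-v=\frac1n v-(1+\frac1n)w$ give
$$
\Vert r_n-v\Vert^2=\Vert s_n-v\Vert^2=\frac{\Vert\la^0\Vert^2}{n^2}\left(1+\frac{(1+1/n)^2}{4}\right),
$$
which exceeds $\frac{\Vert\la^0\Vert^2}{4n^2}$ for every $n$. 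So the outer vertices $r_n,s_n$ realize the maximum; setting $\rho_n^2$ equal to the displayed quantity one gets $\rho_n\to0$, $Q_n\subset B(\la^0,\rho_n)$, and $\mu(B(\la^0,\rho_n))=\pi\rho_n^2$.

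Finally I would form the quotient
$$
\frac{\mu(Q_n)}{\mu(B(\la^0,\rho_n))}=\frac{1+\frac{1}{2n}}{\pi\left(1+\frac{(1+1/n)^2}{4}\right)},
$$
and, letting $n\to\infty$, this converges to $\frac{1}{\pi(1+1/4)}=\frac{4}{5\pi}$, as claimed. The computation is elementary once the orthogonal decomposition $v\perp w$ is in place, and I do not expect a genuine obstacle; the only point that deserves care is the identification of $\rho_n$ as the circumradius of $Q_n$ about $\la^0$, i.e.\ the justification that the farthest vertices are $r_n,s_n$ rather than a point interior to an edge, which is exactly the convexity remark above. The role of the lemma is purely to record the limiting density $\tfrac{4}{5\pi}$ of the trapezoids inside their circumscribed balls, which is what will later let one transfer the Lebesgue density estimate (\ref{dense}) from balls to the quadrilaterals $Q_n$.
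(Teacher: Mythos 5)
Your computation is correct and follows essentially the same route as the paper: both compute $\mu(Q_n)=\frac{2n+1}{2n^3}\Vert\la^0\Vert^2$ and identify $\rho_n$ as the distance from $\la^0$ to the outer vertices $r_n,s_n$, giving $\rho_n^2=\frac{5n^2+2n+1}{4n^4}\Vert\la^0\Vert^2$ and the ratio $\frac{2n(2n+1)}{\pi(5n^2+2n+1)}\to\frac{4}{5\pi}$. Your explicit orthogonal decomposition $v\perp w$ and the convexity remark justifying that the maximum distance is attained at a vertex merely make explicit what the paper asserts without comment.
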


\begin{proof} We have 
$$
\mu(Q_n)=\frac{2n+1}{2n^3}\Vert \la^0 \Vert^2.
$$
The points of $Q_n$ situated the farthest from $\la^0$ are the vertices $r_n$ and $s_n$. This implies that $Q_n$ is contained in the ball centered at $\la^0$ of radius $\rho_n=\sqrt{5n^2+2n+1}\frac{\Vert \la^0 \Vert}{2n^2}$. We have $\rho_n\to 0$, as $n\to \infty$, and 
$$
\frac{\mu(Q_n)}{\mu(B(\la^0,\rho_n))}=\frac{2n(2n+1)}{\pi (5n^2+2n+1)}\to \frac{4}{5\pi}.
$$ 
\end{proof}

\begin{cor} \label{Qn} For every $\varepsilon >0$ there exists $n\geq 1$ such that $\mu(\Omega \cap Q_n)\geq (1-\varepsilon) \mu(Q_n)$.  \end{cor}

\begin{proof} Let $B(\la^0,\rho_n)$ be the sequence of balls defined in the previous lemma. Since $\la^0$ is a density point of $\Omega$, we know that 
$$
\lim_{n\to\infty} \frac{\mu(\Omega \cap B(\la^0,\rho_n))}{\mu(B(\la^0,\rho_n))}=1.
$$  
Fix $\delta \in (0,1)$. For $n$ large enough we have $\mu(\Omega\cap B(\la^0,\rho_n))\geq (1-\delta)\mu(B(\la^0,\rho_n))$. Since $Q_n\subset B(\la^0,\rho_n)$, we get 
$$
\mu(\Omega \cap Q_n)= \mu(\Omega \cap B(\la^0,\rho_n)) - \mu(\Omega \cap (B(\la^0,\rho_n) \setminus Q_n))
$$
which implies 
$$
\mu(\Omega \cap Q_n)\geq (1-\delta) \mu( B(\la^0,\rho_n)) -  \mu(B(\la^0,\rho_n) \setminus Q_n)
= \mu(Q_n)-\delta\mu(B(\la^0,\rho_n)).
$$
 The relation (\ref{ball}) implies
$$\frac{\mu(\Omega\cap Q_n)}{\mu(Q_n)}\geq 1-\delta \frac{\mu(B(\la^0,\rho_n))}{\mu(Q_n)}\geq 1-\delta 2\pi,$$ for $n$ large enough. Since $\delta$ may be chosen as small as we wish, the claim follows.  \end{proof}

\subsection{The distorted quadrilateral}

Let $Q_n$ be a quadrilateral defined above and satisfying (\ref{dense}). In the next lemma we consider the intersection of $Q_n$ with cones of the partition $\cP_N$ given by Corollary \ref{distortionthm}. In particular, for $N$ large enough, one of the new smaller quadrilaterals satisfies a density condition close to (\ref{dense}).     

\begin{lemma} \label{Q} 
For $N\geq 1$ large enough there exists $C\in\cP_N$ such that the quadrilateral  $Q =Q_n\cap C$ satisfies the density condition
$$ 
\mu(\Omega \cap Q)\geq (1-2\varepsilon) \mu(Q).
$$ 
\end{lemma}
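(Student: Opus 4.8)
The plan is to establish the density estimate by an averaging argument over the cells into which $\cP_N$ subdivides $Q_n$, rather than by a direct appeal to a Lebesgue density theorem. The direct route is unavailable: the quadrilaterals cut out of $Q_n$ by the very thin subcones of $\cP_N$ are long and narrow, so they do not form a regular (bounded-eccentricity) family shrinking to $\la^0$, and one cannot invoke the density point property for them. What we do have, from Corollary \ref{Qn}, is that $Q_n$ itself already carries density at least $1-\varepsilon$ of $\Omega$. So I would first set $B=Q_n\setminus\Omega$, record that $\mu(B)\le\varepsilon\,\mu(Q_n)$, and observe that the cones of $\cP_N$ partition $Q_n$ into the pieces $Q_n\cap C$, over which the bad measure splits as $\sum_C\mu(B\cap C)=\mu(B)$.

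Next I would run a Markov-type inequality on this splitting. Call a cone $C$ good when $\mu(\Omega\cap Q_n\cap C)\ge(1-2\varepsilon)\,\mu(Q_n\cap C)$, equivalently $\mu(B\cap C)\le 2\varepsilon\,\mu(Q_n\cap C)$. For a cone that is not good one has $\mu(Q_n\cap C)<\frac{1}{2\varepsilon}\mu(B\cap C)$, and summing over the non-good cones gives $\sum\mu(Q_n\cap C)<\frac{1}{2\varepsilon}\sum\mu(B\cap C)\le\frac{1}{2\varepsilon}\cdot\varepsilon\,\mu(Q_n)=\tfrac12\mu(Q_n)$. Hence the good cones cover more than half of $Q_n$. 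Since every cone of $\cP_N$ has distortion greater than $N$ by Corollary \ref{distortionthm}, the distortion requirement is automatic; the only remaining issue is to select a good cone for which $Q=Q_n\cap C$ is a genuine quadrilateral whose two radial sides lie on the edges of $C$, so that the subsequent application of $\cE^{s}$ will send those sides onto the coordinate axes.

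This is where the geometry of $Q_n$ must be controlled. The trapezoid $Q_n$ occupies an angular sector about the direction of $\la^0$, bounded by the two radial sides through $p_n$ and $q_n$ and by the two parallel cross-segments $p_nq_n$ and $s_nr_n$. A cone $C$ lying strictly inside this sector meets $Q_n$ in a quadrilateral of the same type, with its two radial sides furnished by the edges of $C$ itself; only the (at most two) cones containing the boundary radial directions of $Q_n$ fail this. I would choose the density point $\la^0$ so that the directions of all the $p_n,q_n$ are irrational (the excluded $\la^0$ form a null set, so this costs nothing). Then, using that $\bigcap_{k}C^{(k)}_{\theta}$ is a single ray for an irrational direction $\theta$ together with the fact that the first exceedance time $s(\theta)$ tends to infinity as $N\to\infty$, these two boundary cones shrink to their radial lines, so their $\mu(\,\cdot\,\cap Q_n)$-measure tends to $0$ with $N$.

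Combining the two estimates finishes the proof: for $N$ large enough the boundary cones cover less than $\tfrac12\mu(Q_n)$, whereas the good cones cover more than $\tfrac12\mu(Q_n)$, so their intersection has positive measure and in particular some interior cone $C$ is good. For this $C$ the set $Q=Q_n\cap C$ is a nondegenerate quadrilateral, has distortion greater than $N$, and satisfies $\mu(\Omega\cap Q)\ge(1-2\varepsilon)\mu(Q)$, as required. The delicate point, and the place I expect to spend the most care, is exactly this bookkeeping at the ends of the angular sector: one must check that the interior cells are honest quadrilaterals bounded by the cone edges and that the two boundary cells are asymptotically negligible, so that the purely measure-theoretic averaging can be cashed in on a cell of the correct shape.
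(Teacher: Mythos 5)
Your proof is correct and follows essentially the same route as the paper: partition $Q_n$ by the cells $Q_n\cap C$, $C\in\cP_N$, average to find a cell of density at least $1-2\varepsilon$, and discard the (at most two) cells straddling the boundary rays of $C(Q_n)$. The only divergence is cosmetic: where you control the two boundary cells via irrationality of the directions of $p_n,q_n$ and the shrinking of nested cones, the paper simply invokes the uniform bound $\max_{C\in\cP_N}\mu(Q_n\cap C)\to 0$ furnished by Lemma \ref{compact}, which handles all cells at once and avoids any genericity assumption on $\la^0$.
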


\begin{proof} Let $C(Q_n)$ be the subcone of $\R^2_+$ generated by the couple of vectors corresponding to the vertices $p_n$ and $q_n$ of $Q_n$. Thus $C(Q_n)$ is the smallest cone containing the quadrilateral $Q_n$. 
For every $N\geq 1$, the family, by Corollary \ref{distortionthm}, the family $\mathcal{P}_N$ is a partition of $\R^2_+$ and 
$$ \lim_{N\to\infty} \max_{C\in\cP_N} \mu(Q_n\cap C)=0, $$ by Lemma \ref{compact}. This means that for $N$ large enough we get 
$$
\sum_{\substack{C\in\mathcal{P}_N \\ C\subset C(Q_n)}} \mu(Q_n\cap C)\geq (1-\varepsilon) \mu(Q_n).
$$ 
If $\mu(\Omega \cap Q_n\cap C) < (1-2\varepsilon ) \mu (Q_n\cap C)$ for every cone $C$ in the above sum, we get
$$
\mu(\Omega \cap Q_n)< (1-2\varepsilon)\mu(Q_n) + \varepsilon\mu(Q_n) = (1-\varepsilon)\mu(Q_n),  
$$ 
which contradicts the choice of $Q_n$.\end{proof}

Let $N\geq 1$ be fixed and let $Q$ be a quadrilateral given by the previous lemma. It is defined as $Q=Q_n\cap C^{(s)}$, where $C^{(s)}\in \cP_N\cap \cP^{(s)}$ for some $s\geq 1$. Let also $l_1^{(s)}$ and $l_2^{(s)}$ be the column-vectors of the matrix $B_{m_1}\cdots B_{m_s}$ generating $C^{(s)}$. Since $C^{(s)}\in \cP_N$, those vectors satisfy the corresponding distortion condition. Without loss of generality, assume $\Vert l_1\Vert \geq N \Vert l_2 \Vert$.

The vertices of $Q$ may be written as $\alpha_1 l_1, \beta_1 l_1, \beta_2 l_2$ and $\alpha_2 l_2$, where $0<\alpha_1<\beta_1$ and $0<\alpha_2<\beta_2$ (see Figure 2). Let $\phi_n$ be the angle between the vectors corresponding to the points $q_n$ and $\la^0$ of the quadrilateral $Q_n$ (see Figure 1). We have the following estimates.

\begin{figure}

\begin{picture}(300,260)(0,5)

\put(20 ,35){\mbox{\includegraphics[scale=0.5]{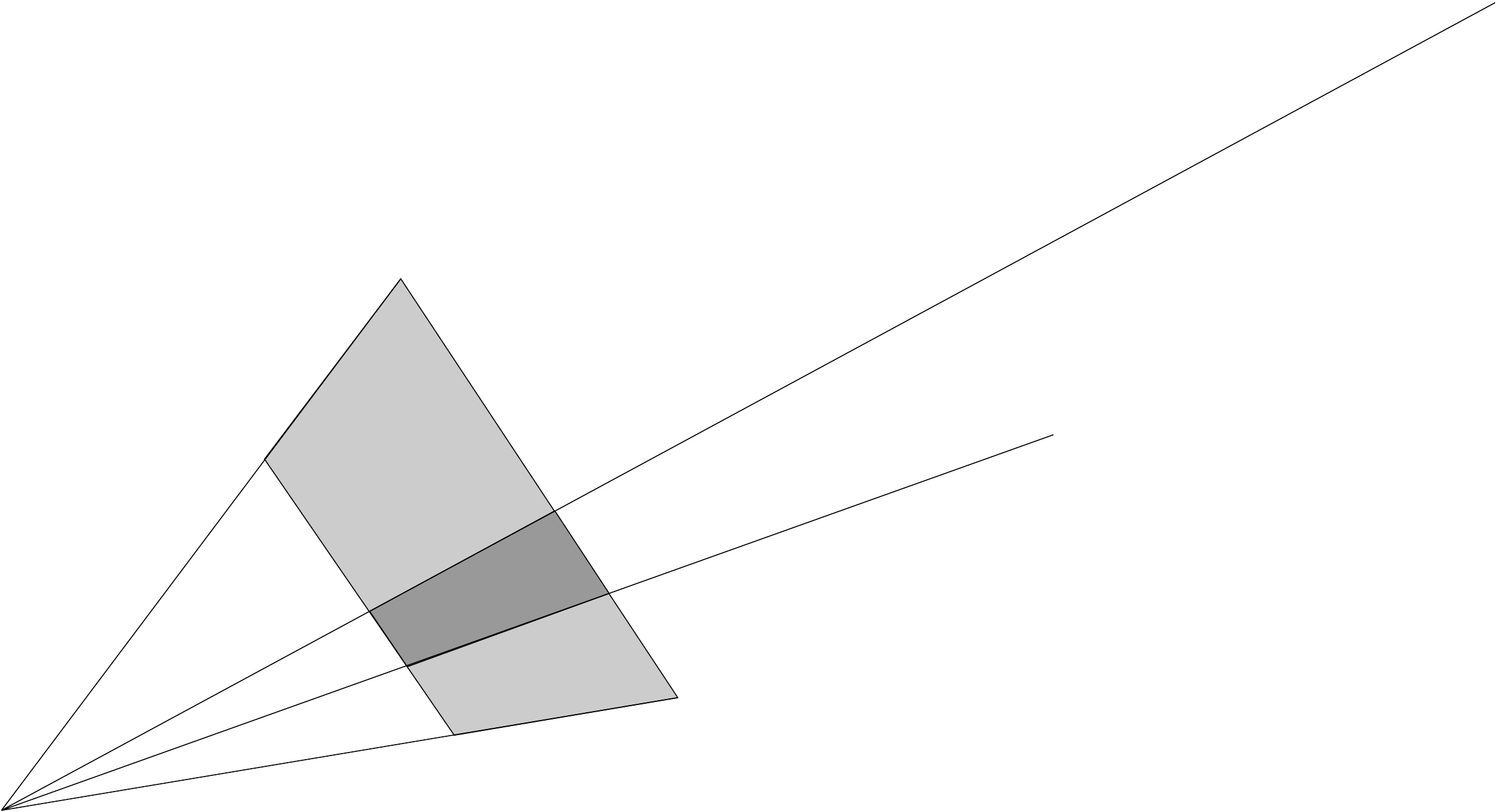}}}

\put(120,5){\mbox{Figure 2. Quadrilaterals $Q_n$ and $Q$.}}

\put(125,44){\mbox{$q_n$}}
\put(180,52){\mbox{$r_n$}}

\put(70,120){\mbox{$p_n$}}
\put(105,165){\mbox{$s_n$}}

\put(275,115){\mbox{$l_2$}}
\put(365,235){\mbox{$l_1$}}

\put(127,83){\mbox{$Q$}}
\put(105,115){\mbox{$Q_n$}}

\put(98,57){\mbox{$\alpha_2 l_2$}}
\put(173,81){\mbox{$\beta_2 l_2$}}

\put(82,83){\mbox{$\alpha_1 l_1$}}
\put(150,118){\mbox{$\beta_1 l_1$}}

\put(15,25){\mbox{$0$}}
\end{picture}

\end{figure}

\begin{lemma} \label{ratios} The vertices of $Q$ satisfy 
$$ \frac{\alpha_1}{\beta_1}=\frac{\alpha_2}{\beta_2}=\frac{n}{n+1} \ \ \text{and} \ \ \frac{\alpha_2}{\alpha_1}=\frac{\beta_2}{\beta_1}\geq N\cos\phi_n.$$  \end{lemma}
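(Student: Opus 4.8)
The plan is to read off both relations directly from the explicit geometry of $Q_n$ and $C^{(s)}$, by expressing the four parameters $\alpha_1,\beta_1,\alpha_2,\beta_2$ through the angles that the generators $l_1,l_2$ make with $\lambda^0$. First I would identify the two edges of $Q_n$ that the half-lines $\{t l_1:t>0\}$ and $\{t l_2:t>0\}$ meet. The segment $p_nq_n$ is perpendicular to $\lambda^0$ and has $\lambda^0$ as its midpoint, so it lies on the line $\{x:\langle x,\lambda^0\rangle=\Vert\lambda^0\Vert^2\}$; the opposite edge $s_nr_n$ is its radial image under the homothety of ratio $1+\frac1n$, hence lies on $\{x:\langle x,\lambda^0\rangle=(1+\frac1n)\Vert\lambda^0\Vert^2\}$. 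Since $C^{(s)}\subset C(Q_n)$, each generator lies in the angular sector $C(Q_n)$, so each half-line $\{t l_i:t>0\}$ crosses $Q_n$ exactly in the segment joining the inner edge to the outer edge; thus $\alpha_i l_i$ lies on $p_nq_n$ and $\beta_i l_i$ on $s_nr_n$, consistent with $\alpha_i<\beta_i$.

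Next I would solve the two linear incidence conditions. Writing $\psi_i$ for the angle between $l_i$ and $\lambda^0$, the relation $\langle\alpha_i l_i,\lambda^0\rangle=\Vert\lambda^0\Vert^2$ gives
\[
\alpha_i=\frac{\Vert\lambda^0\Vert}{\Vert l_i\Vert\cos\psi_i},
\]
while the outer-edge condition replaces the right-hand side by its multiple by $1+\frac1n$, so $\beta_i=(1+\frac1n)\alpha_i$. This immediately yields $\alpha_1/\beta_1=\alpha_2/\beta_2=n/(n+1)$. Dividing the expression for $\alpha_2$ by that for $\alpha_1$ (and likewise for the $\beta$'s, where the factor $1+\frac1n$ cancels) gives
\[
\frac{\alpha_2}{\alpha_1}=\frac{\beta_2}{\beta_1}=\frac{\Vert l_1\Vert\cos\psi_1}{\Vert l_2\Vert\cos\psi_2}.
\]

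It then remains to bound this quantity from below. The distortion hypothesis $\Vert l_1\Vert\geq N\Vert l_2\Vert$ controls the norm factor. For the cosines, observe that $\phi_n$ is precisely the half-opening angle of $C(Q_n)$ about $\lambda^0$, namely the angle from $\lambda^0$ to $q_n$ and, by symmetry, to $p_n$. Since $l_1\in C(Q_n)$ we have $\psi_1\leq\phi_n$, hence $\cos\psi_1\geq\cos\phi_n$, while $\cos\psi_2\leq 1$ trivially; thus $\cos\psi_1/\cos\psi_2\geq\cos\phi_n$. Multiplying the two estimates gives $\alpha_2/\alpha_1\geq N\cos\phi_n$, as asserted. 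The only point requiring care is to orient this cosine estimate correctly, bounding the numerator below by $\cos\phi_n$ through the containment $l_1\in C(Q_n)$ rather than inadvertently bounding the ratio by its reciprocal; once $\phi_n$ is recognised as the aperture of $C(Q_n)$ and the distortion as the control on the norms, the rest is a direct computation with the defining equations of the two edges.
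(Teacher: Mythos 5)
Your proof is correct and follows essentially the same route as the paper: the paper's inequalities $\alpha_2\Vert l_2\Vert\geq\Vert\la^0\Vert$ and $\alpha_1\Vert l_1\Vert\leq\Vert q_n\Vert$, together with $\Vert\la^0\Vert/\Vert q_n\Vert=\cos\phi_n$, are exactly your cosine bounds $\cos\psi_2\leq 1$ and $\cos\psi_1\geq\cos\phi_n$ written in terms of distances along the edge $p_nq_n$. Your version merely makes explicit the incidence conditions (inner edge on $\langle x,\la^0\rangle=\Vert\la^0\Vert^2$, outer edge its $(1+\frac1n)$-homothet) that the paper leaves to the figures.
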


\begin{proof} The first equality above comes from the fact that $Q$ is a trapezoid and from the definition of $Q_n$. To prove the second one, recall that $Q$ is the intersection of the trapezoid $Q_n$ with a subcone of the initial cone $C(Q_n)$. This means, according to Figures 1 and 2, that $\alpha_2\Vert l_2\Vert\geq \Vert \la^0 \Vert$ and $\alpha_1 \Vert l_1\Vert \leq \Vert q_n\Vert$. We get
$$\frac{\alpha_2}{\alpha_1}\geq \frac{\Vert l_1\Vert \Vert \la^0\Vert}{\Vert l_2\Vert \Vert q_n \Vert}\geq N\cos\phi_n.$$ The same argument holds for $\beta_2 / \beta_1$.  \end{proof}

\subsection{Intersection property} 
We know that $\cE^s(C^{(s)})=\R^2_+$ and $\cE^s(l_i)=e_i$, $i=1,2$ (see (\ref{base})). Since the vertices of $Q$ are $\alpha_1l_1,\beta_1 l_1, \beta_2 l_2$ and $\alpha_2 l_2$, the image $\cE^s(Q)$ is the trapezoid $T$ of vertices $(\alpha_1,0), (\beta_1,0), (0,\beta_2)$ and $(0,\alpha_2)$ (see Figure 3). Moreover, the map $\cE^s$ restricted to $C^{(s)}$ is bijective and preserves Lebesgue measure, which implies 
$$\mu(\cE^s(\Omega)\cap T)\geq (1-2\varepsilon)\mu(T).$$
 
Consider the smaller trapezoid $T_+=\{\la\in T: \la_2\geq \la_1\}$ whose vertices are $(0,\alpha_2)$, $(0,\beta_2)$, $(\frac{\beta_1\beta_2}{\beta_1+\beta_2}, \frac{\beta_1\beta_2}{\beta_1+\beta_2})$ and $(\frac{\alpha_1\alpha_2}{\alpha_1+\alpha_2}, \frac{\alpha_1\alpha_2}{\alpha_1+\alpha_2})$. A short calculation and Lemma \ref{ratios} give 
\be \label{?} \frac{\mu(T_+)}{\mu(T)}=\left(\frac{\beta_1(\beta_2)^2}{\beta_1+\beta_2} - \frac{\alpha_1(\alpha_2)^2}{\alpha_1+\alpha_2}\right)\frac{1}{\beta_1\beta_2-\alpha_1\alpha_2} = \frac{\beta_2}{\beta_1+\beta_2} \geq \frac{N\cos\phi_n}{N\cos\phi_n+1}.\ee 
The choice of $N$ is independent of the choice of $\varepsilon$ and of the first quadrilateral $Q_n$. Choosing $N$ large enough, we may assume $\mu(T_+)\geq (1-\varepsilon) \mu(T)$ which implies
\begin{equation} \label{denseT+} \mu(\cE^s(\Omega)\cap T_+)\geq (1-3\varepsilon) \mu(T_+).\end{equation}   

\begin{figure}

\begin{picture}(300,320)(0,5)

\put(40 ,40){\mbox{\includegraphics[scale=0.5]{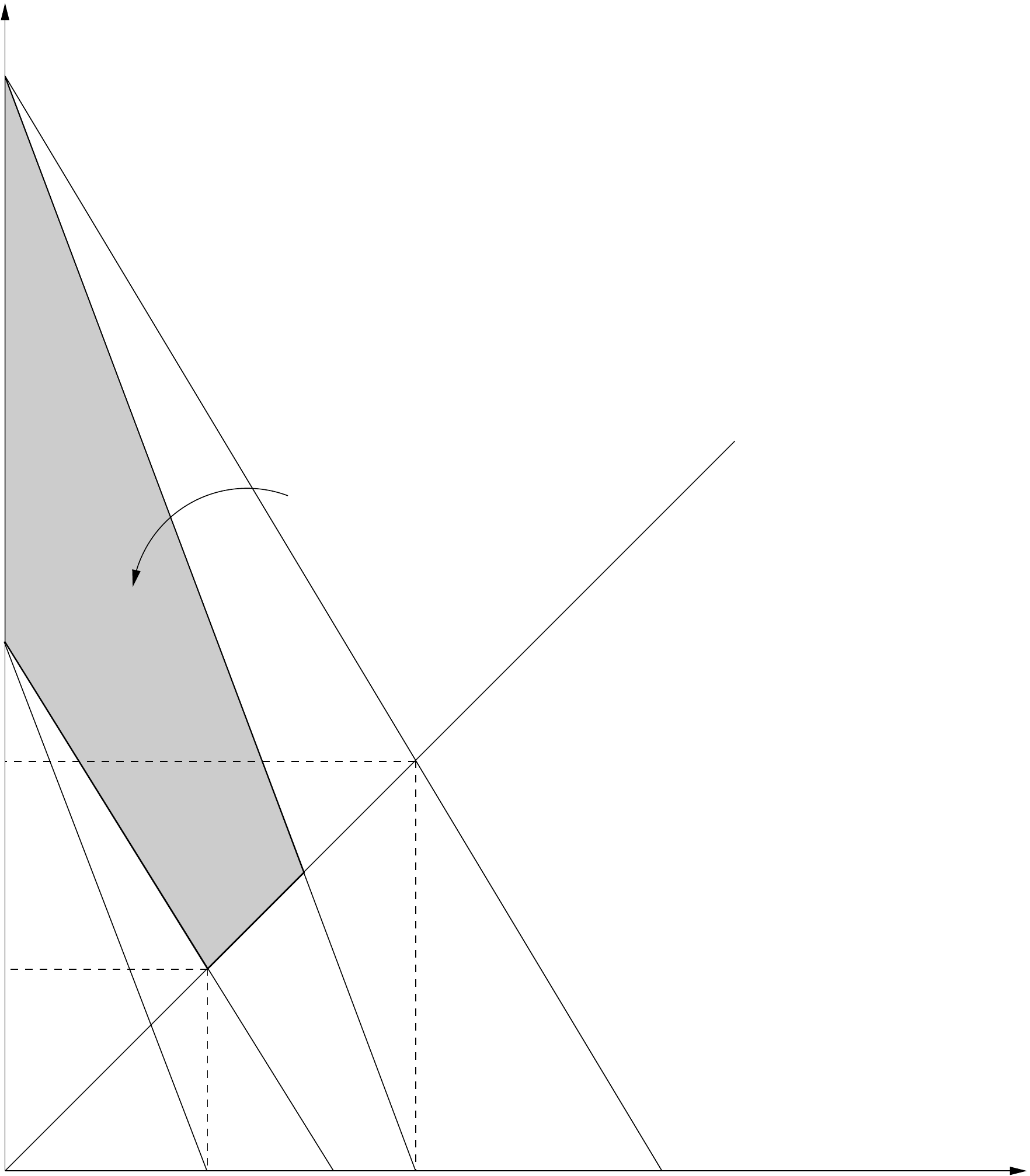}}}

\put(120,5){\mbox{Figure 3. Trapezoids $T$, $T_+$ and $\cE(T_+)$.}}

\put(113,30){\mbox{$\alpha_1$}}
\put(200,30){\mbox{$\beta_1$}}
\put(130,27){\mbox{$\frac{\beta_1\beta_2}{\beta_1+\beta_2}$}}
\put(75,27){\mbox{$\frac{\alpha_1\alpha_2}{\alpha_1+\alpha_2}$}}

\put(25,175){\mbox{$\alpha_2$}}
\put(25,310){\mbox{$\beta_2$}}
\put(10,140){\mbox{$\frac{\beta_1\beta_2}{\beta_1+\beta_2}$}}
\put(10,90){\mbox{$\frac{\alpha_1\alpha_2}{\alpha_1+\alpha_2}$}}

\put(115,205){\mbox{$T_+\cap \cE(T_+)$}}

\put(30,30){\mbox{$0$}}
\end{picture}
\end{figure}

\begin{lemma} \label{intersectionlem} For $N$ large enough  
\begin{equation} \label{intersection} \mu(\cE(T_+)\cap T_+) \geq \frac{1}{2} \mu (T_+)=\frac{1}{2}\mu(\cE(T_+)).\end{equation} \end{lemma}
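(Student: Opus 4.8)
The plan is to use the fact that on $T_+$, where $\la_2\ge\la_1$, the map $\cE$ coincides (up to the null diagonal) with the linear shear $\sigma=B_2^{-1}$, $\sigma(\la_1,\la_2)=(\la_1,\la_2-\la_1)$. Since $\det B_2=1$, the shear preserves Lebesgue measure; hence $\mu(\cE(T_+))=\mu(T_+)$, which is the equality asserted in (\ref{intersection}). It remains to bound $\mu(\cE(T_+)\cap T_+)$ below by $\tfrac12\mu(T_+)$. The key observation is that $\sigma$ fixes the first coordinate and translates every vertical line $\{\la_1=c\}$ downward by exactly $c$, so I would estimate the overlap one vertical slice at a time.

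First I would note that $T_+$ is convex with rightmost vertex $(d,d)$, where $d=\frac{\beta_1\beta_2}{\beta_1+\beta_2}$, so both $T_+$ and $\cE(T_+)$ lie in the strip $\{0\le\la_1\le d\}$. For $c\in[0,d]$ write $T_+\cap\{\la_1=c\}=\{c\}\times[y_-(c),y_+(c)]$ and set $w(c)=y_+(c)-y_-(c)$. Because $\sigma$ shifts this slice down by $c$, the corresponding slice of $\cE(T_+)$ is $\{c\}\times[y_-(c)-c,y_+(c)-c]$, and the two overlap in $\{c\}\times[y_-(c),y_+(c)-c]$, of length $\max(0,w(c)-c)$. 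Integrating in $c$ and using $\int_0^d w(c)\,dc=\mu(T_+)$ (Fubini) gives
\be
\mu(\cE(T_+)\cap T_+)=\int_0^d\max(0,w(c)-c)\,dc\ \ge\ \int_0^d\big(w(c)-c\big)\,dc=\mu(T_+)-\tfrac12 d^2 .
\ee
Thus it suffices to show $d^2\le\mu(T_+)$.

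To verify this I would substitute $\mu(T)=\tfrac12(\beta_1\beta_2-\alpha_1\alpha_2)$ and the identity $\mu(T_+)=\frac{\beta_2}{\beta_1+\beta_2}\mu(T)$ recorded just above, together with $\frac{\alpha_1}{\beta_1}=\frac{\alpha_2}{\beta_2}=\frac{n}{n+1}$ from Lemma \ref{ratios}. A short manipulation turns $d^2\le\mu(T_+)$ into the equivalent inequality $1+\frac{\beta_2}{\beta_1}\ge\frac{2(n+1)^2}{2n+1}$. Since Lemma \ref{ratios} also gives $\frac{\beta_2}{\beta_1}=\frac{\alpha_2}{\alpha_1}\ge N\cos\phi_n$, and $n$ is fixed while $\cos\phi_n$ stays bounded below, the right-hand side is a fixed number, so the inequality holds once $N$ is large enough. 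This is precisely the hypothesis of the lemma.

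The main obstacle is that the constant $\tfrac12$ is genuinely quantitative and cannot be obtained softly: the overlap fraction is controlled by the ratio of the shear displacement, of size $d\approx\beta_1$, to the vertical extent of $T_+$, of size $\beta_2-\alpha_2$. For a frozen trapezoid $Q_n$ the slice $T_+$ may be very thin in the $\la_2$-direction, and then $d^2\le\mu(T_+)$ fails unless the distortion $\beta_2/\beta_1$ dominates the thinness parameter $n$. The point to get right is therefore the order of the quantifiers: $n$ is fixed first (via Corollary \ref{Qn}), and only afterwards is $N$ taken large, so that $\beta_2/\beta_1\ge N\cos\phi_n$ can be pushed past the fixed threshold $\frac{2(n+1)^2}{2n+1}-1$ while $n$ remains fixed. (Alternatively one can compute $\mu(\cE(T_+)\cap T_+)$ exactly by the shoelace formula, but the slicing estimate avoids the case analysis caused by the position of the diagonal relative to the boundary of $T_+$ and yields the threshold directly.)
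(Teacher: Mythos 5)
Your argument is correct, and it ends at essentially the same place as the paper --- a lower bound on $\beta_2/\beta_1$ in terms of the fixed parameter $n$, secured via Lemma \ref{ratios} by taking $N$ large --- but the step that quantifies the overlap is genuinely different. The paper identifies $\cE(T_+)$ as the trapezoid with vertices $(0,\alpha_2)$, $(0,\beta_2)$, $(\frac{\beta_1\beta_2}{\beta_1+\beta_2},0)$, $(\frac{\alpha_1\alpha_2}{\alpha_1+\alpha_2},0)$ and reduces (\ref{intersection}) to the single vertex inequality $\frac{\beta_1\beta_2}{\beta_1+\beta_2}\geq\frac{\alpha_1+\beta_1}{2}$, equivalent to $\beta_2/\beta_1\geq 2n+1$; the passage from that inequality to the measure bound is left to Figure 3. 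You instead exploit that the shear fixes $\la_1$ and translates the vertical slice at $\la_1=c$ down by exactly $c$, so Fubini gives $\mu(\cE(T_+)\cap T_+)=\int_0^d\max(0,w(c)-c)\,dc\geq\mu(T_+)-\tfrac12 d^2$ with $d=\frac{\beta_1\beta_2}{\beta_1+\beta_2}$, reducing the lemma to $d^2\leq\mu(T_+)$; your algebra correctly converts this to $1+\beta_2/\beta_1\geq 2(n+1)^2/(2n+1)$, a threshold of the same order as (in fact slightly weaker than) the paper's $2n+1$. What this buys is a self-contained, picture-free estimate with an explicit error term $\tfrac12 d^2$, at the cost of a little more computation. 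Your closing remark on the quantifier order --- $n$ is fixed first by Corollary \ref{Qn}, and only then is $N$ chosen so that $N\cos\phi_n$ exceeds a threshold depending on $n$, which is possible since $\cos\phi_n=2n/\sqrt{4n^2+1}$ is bounded below --- is exactly the right reading of the paper's somewhat loosely worded claim that the choice of $N$ is ``independent of $Q_n$,'' and it is the point most worth being explicit about.
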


\begin{proof} The image $\cE(T_+)$ is the trapezoid of vertices $(0,\alpha_2)$, $(0,\beta_2)$, $(\frac{\beta_1\beta_2}{\beta_1+\beta_2}, 0)$, and  $(\frac{\alpha_1\alpha_2}{\alpha_1+\alpha_2}, 0)$. We will show that for $N$ large enough we get
$$ 
 \frac{\beta_1\beta_2}{\beta_1+\beta_2}\geq \frac{\beta_1+\alpha_1}{2}
$$
which implies (\ref{intersection}). The above inequality is equivalent to
$$\beta_1\beta_2\geq (\beta_1)^2+\alpha_1\beta_1+\alpha_1\beta_2,$$
$$\frac{\beta_2}{\beta_1}\geq 1+\frac{\alpha_1}{\beta_1}+\frac{\alpha_1}{\beta_1}\frac{\beta_2}{\beta_1},$$
$$\frac{\beta_2}{\beta_1} \geq 2n+1,$$ by Lemma \ref{ratios}. Since the choice of $N$ is independent of the choice of the first quadrilateral $Q_n$ and  since $\beta_2/\beta_1\geq N\cos\phi_n$, the claim follows. \end{proof}

Now we conclude the proof of Theorem \ref{euclid}. Let $\varepsilon<1/12$ and $N$ be large enough for (\ref{denseT+}) and (\ref{intersection}) to hold. We get 
$$\mu(\cE^s(\Omega)\cap T_+\cap \cE(T_+))> \frac{1}{2} \mu(T_+\cap \cE(T_+)) \ \ \text{and} \ \  \mu(\cE^{s+1}(\Omega)\cap T_+\cap \cE(T_+))> \frac{1}{2} \mu(T_+\cap \cE(T_+)) $$ which imply $\mu(\cE^s(\Omega)\cap \cE^{s+1}(\Omega))>0$. Therefore $\cE$ is exact with respect to Lebesgue measure and the proof of Theorem \ref{euclid} is complete. \hfill $\square$

\subsection{Other versions of $\cE$} 
In the literature one may find other definitions of the Euclidean algorithm. We would like to mention two of them. The first one, denoted by $\cE_{\sigma}$, is defined on $\R^2_+$ by
$$\mathcal{E}_{\sigma}(\la_1,\la_2)=(\la_{\sigma(1)},\la_{\sigma(2)}-\la_{\sigma(1)})=\left\{ \begin{array}{lll} (\la_1,\la_2-\la_1) & \text{if} & \la_1\leq \la_2, \\ (\la_2,\la_1-\la_2) & \text{if} & \la_2 < \la_1. \end{array} \right.$$
Here $\sigma$ stands for the permutation (depending on $\la$) of the set $\{1,2\}$ such that $\la_{\sigma(1)}\leq \la_{\sigma(2)}$. It differs from $\cE$ only by the permutation of coordinates in the upper sub-cone $\{\la_2\geq \la_1\}$. 

The other one is a projection of $\cE$ onto the subset $\Lambda^2=\{\la\in\R^2_+: \la_1\leq \la_2\}$. We define it as follows.
$$\mathcal{E}_{\pi}(\la_1,\la_2)=\pi(\la_1,\la_2-\la_1) =\left\{ \begin{array}{lll} (\la_1,\la_2-\la_1) & \text{if} & \la_1\leq \la_2-\la_1 \\ (\la_2-\la_1,\la_1) & \text{if} & \la_2-\la_1 < \la_1, \end{array} \right.$$ where $\pi$ stands for the permutation of coordinates (depending on $\la$) arranging them in nondecreasing order. 

From the ergodicity of $\cE$ one may deduce the ergodicity of $\cE_{\sigma}$ and $\cE_{\pi}$. The proof of Theorem \ref{euclid} presented above applies with minor changes to those two transformations. 

\begin{cor} \label{sig-pi} The maps $\cE_{\sigma}$ and $\cE_{\pi}$ are exact with respect to Lebesgue measure. \end{cor}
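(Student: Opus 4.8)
The plan is to verify, for each of $\cE_\sigma$ and $\cE_\pi$, the hypotheses of Lemma~\ref{exact-lem} and then the intersection property, exactly as was done for $\cE$. Both maps are bi-measurable and nonsingular, since each branch is linear with determinant $\pm 1$ and hence locally measure preserving, while ergodicity with respect to Lebesgue measure is inherited from that of $\cE$. By Lemma~\ref{exact-lem} it therefore suffices to establish the intersection property for each map, and I would do this by rerunning the argument of Section~4.

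For $\cE_\sigma$ the argument transfers almost verbatim, for two reasons. First, the geometric family of subcones $\bigcup_{k\ge1}\cP^{(k)}$ is nothing but the Stern--Brocot subdivision of $\R^2_+$ by mediants and is therefore independent of the map: the two inverse branches of $\cE_\sigma$ send $\R^2_+$ onto the same upper and lower subcones as those of $\cE$, and the level-$k$ partitions coincide setwise. Consequently Lemma~\ref{compact}, Theorem~\ref{cones} and Corollary~\ref{distortionthm} apply unchanged, and for each $s$ the map $\cE_\sigma^s$ restricted to a cone $C^{(s)}$ is a measure-preserving bijection onto $\R^2_+$ sending the two generators to $e_1,e_2$ (up to order). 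Second, on the upper subcone $\{\la_1\le\la_2\}$ the maps $\cE_\sigma$ and $\cE$ coincide. Hence I would fix a density point $\la^0$ of $\Omega$, build the quadrilaterals $Q_n$ and the distorted quadrilateral $Q=Q_n\cap C^{(s)}$ exactly as before, push forward by $\cE_\sigma^s$ to the trapezoid $T$, and restrict to $T_+=T\cap\{\la_1\le\la_2\}$. Since $\cE_\sigma$ and $\cE$ agree on $T_+$, the estimate $\mu(\cE_\sigma(T_+)\cap T_+)\ge\frac12\mu(T_+)$ is identical to Lemma~\ref{intersectionlem}, and the concluding computation gives $\mu(\cE_\sigma^s(\Omega)\cap\cE_\sigma^{s+1}(\Omega))>0$.

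For $\cE_\pi$ the cleanest route is to observe that it is a measurable factor of $\cE_\sigma$. Indeed $\cE_\sigma(\la)$ depends only on the unordered pair $\{\la_1,\la_2\}$, so $\cE_\sigma$ is invariant under the coordinate swap, and a short check shows that the folding map $\pi:\R^2_+\to\Lambda^2$ intertwines the two dynamics,
\[
\cE_\pi\circ\pi=\pi\circ\cE_\sigma .
\]
The map $\pi$ is nonsingular and pushes Lebesgue measure to a measure equivalent to Lebesgue measure on $\Lambda^2$. Exactness descends to such factors: a tail set $A\in\bigcap_m\cE_\pi^{-m}(\cB)$ pulls back, via the intertwining relation iterated in $m$, to a tail set $\pi^{-1}(A)\in\bigcap_m\cE_\sigma^{-m}(\cB)$, which is trivial by exactness of $\cE_\sigma$; since $\pi_*\mu\sim\mu$, the set $A$ is trivial as well. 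This yields exactness of $\cE_\pi$. Alternatively one could adapt Section~4 directly, taking the half-cone $\Lambda^2$ as the model region and the diagonal $\{\la_1=\la_2\}$ as one boundary ray.

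I expect the only real work to lie in the \emph{structural transfer} of the preliminary results of Section~3 to the variant maps. For $\cE_\sigma$ this collapses to the remark that the cone partitions are a map-independent geometric object, so no estimate needs reproving, and the final intersection step then costs nothing because $\cE_\sigma=\cE$ on the relevant half. For $\cE_\pi$ the factor argument deliberately sidesteps re-deriving the distortion estimates on the half-cone, at the price of invoking the (standard) fact that exactness passes to factors and of checking nonsingularity of the folding $\pi$. The deduction of ergodicity of both variants from that of $\cE$ is the remaining routine point.
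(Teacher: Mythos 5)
Your proposal is correct, and it is considerably more detailed than the paper's own treatment, which consists of the single remark that ergodicity of $\cE_{\sigma}$ and $\cE_{\pi}$ follows from that of $\cE$ and that ``the proof of Theorem \ref{euclid} applies with minor changes.'' Your observation that the level-$k$ cone partitions of $\cE_{\sigma}$ coincide setwise with those of $\cE$ (both are the Stern--Brocot subdivision by mediants) is exactly the right reason why Lemma \ref{compact}, Theorem \ref{cones} and Corollary \ref{distortionthm} transfer without reproof, and your handling of $\cE_{\pi}$ as a factor of $\cE_{\sigma}$ via the folding map, $\cE_{\pi}\circ\pi=\pi\circ\cE_{\sigma}$, is a genuinely different and cleaner route than rerunning the geometric argument on the half-cone $\Lambda^2$: it trades the distortion estimates for the standard fact that exactness (triviality of the tail $\sigma$-algebra) descends to nonsingular factors, and the intertwining relation does iterate correctly, so this part is complete.

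One point in the $\cE_{\sigma}$ argument deserves an explicit check rather than the phrase ``up to order.'' The step $\mu(T_+)\geq(1-\varepsilon)\mu(T)$ with $T_+=T\cap\{\la_1\leq\la_2\}$ requires the trapezoid $T=\cE_{\sigma}^s(Q)$ to be elongated along the $\la_2$-axis, i.e.\ the \emph{first} column of the cylinder matrix of $C^{(s)}$ (the one sent to $e_1$) must be the long generator. For $\cE$ this is a harmless ``without loss of generality'' because $\cE$ commutes with the coordinate swap; $\cE_{\sigma}$ does not (it is invariant under precomposition with the swap, not conjugation), so the two orientations are not interchangeable, and in the wrong orientation $\cE_{\sigma}(T_-)$ for $T_-=T\cap\{\la_1\geq\la_2\}$ lands near the $\la_2$-axis and misses $T_-$ entirely. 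Fortunately the issue is vacuous: the two inverse-branch matrices of $\cE_{\sigma}$ are $\bigl(\begin{smallmatrix}1&1\\1&0\end{smallmatrix}\bigr)$ and $\bigl(\begin{smallmatrix}1&0\\1&1\end{smallmatrix}\bigr)$, so appending either one to a cylinder matrix with columns $v_1,v_2$ produces first column $v_1+v_2$; hence for every $k\geq1$ the first column of every $\cE_{\sigma}$-cylinder matrix is the mediant of the previous two and satisfies $\Vert l_1\Vert_1>\Vert l_2\Vert_1$. Thus every $N$-distorted cone automatically has the orientation your argument needs, and with this one-line verification added your proof is complete.
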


\section{Algorithms conjugate to $\mathcal{E}_{\sigma}$ and $\cE_{\pi}$}

In this section we present two examples of transformations which are conjugate to the Euclidean algorithm and inherit therefore the exactness property. The first one is a normalization of the three-dimensional Poincar\'{e} algorithm. The other one is an example of a so-called fully subtractive algorithm introduced by Schweiger \cite[Chapter 9]{schweiger}.  

\subsection{The Poincar\'{e} algorithm} 
For every point $\la\in\R^3_+$ let $\sigma$ be a permutation of the set $\{1,2,3\}$ such that $\la_{\sigma(1)} \leq  \la_{\sigma(2)}\leq  \la_{\sigma(3)}$. The three-dimensional Poincar\'{e} algorithm is the map $\cP:\R^3_+\to\R^3_+$ defined by 
$$
\cP(\la_1,\la_2,\la_3) = (\la_{\sigma(1)},\la_{\sigma(2)}-\la_{\sigma(1)},\la_{\sigma(3)}-\la_{\sigma(2)}).
$$
In \cite[Theorem 2.1]{nogueira} it is shown that $\cP$ is not ergodic with respect to Lebesgue measure. In fact $\cP$ admits a nontrivial absorbing set $\Gamma\subset \R^3_+$ and its restriction to this set is conjugate to an extension of the Euclidean algorithm $\cE_{\sigma}$. To be more precise, the map $\cP:\Gamma\to \Gamma$ is conjugate to $\cE_{\sigma}\times id : \R^3_+\to \R^3_+$. 

The nonergodic transformation $\cP$ cannot be exact. However, being conjugate to $\cE_{\sigma}\times id$, it satisfies the intersection property. Now consider the projection of the algorithm $\cP$ onto the two-dimensional simplex $\Delta_2=\{\lambda\in\R^3_+: \la_1+\la_2+\la_3=1\}$ given by 
$$\tilde{\cP}:\Delta_2 \ni \la \mapsto \frac{\cP(\la)}{\Vert \cP( \la) \Vert_1}\in\Delta_2.$$ This normalization of $\cP$, called {\it Daniels-Parry map} \cite[p.185]{schweiger}, is ergodic with respect to Lebesgue measure on $\Delta_2$ \cite[Theorem 2.3]{nogueira}. From the intersection property of $\cP$ we deduce the analogous property for $\tilde{\cP}$. 

\begin{cor} The normalized Poincar\'{e} algorithm $\tilde{\cP}:\Delta_2\to\Delta_2$ is exact with respect to Lebesgue measure. \end{cor}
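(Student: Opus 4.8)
The plan is to apply the exactness criterium of Lemma~\ref{exact-lem} to $\tilde{\cP}$. The map $\tilde{\cP}$ is ergodic with respect to Lebesgue measure on $\Delta_2$ by \cite[Theorem 2.3]{nogueira}, and since on each of its finitely many domains of linearity it is the projectivization $\la\mapsto M\la/\Vert M\la\Vert_1$ of an invertible matrix, it is bi-measurable and nonsingular. Hence it suffices to verify that $\tilde{\cP}$ satisfies the intersection property, and I would obtain this by transporting the intersection property of $\cP$ forward along the radial projection.

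Write $\pi:\R^3_+\setminus\{0\}\to\Delta_2$, $\pi(\la)=\la/\Vert\la\Vert_1$. Because $\cP$ is positively homogeneous of degree one, one has $\tilde{\cP}\circ\pi=\pi\circ\cP$, and therefore $\tilde{\cP}^{\,k}\circ\pi=\pi\circ\cP^{k}$ for every $k\geq 1$. In the coordinates $\la=t\theta$ with $t=\Vert\la\Vert_1$ and $\theta\in\Delta_2$, Lebesgue measure on $\R^3_+$ disintegrates as $t^{2}\,dt\,d\mu(\theta)$ up to a constant, so a set of positive three-dimensional measure always projects under $\pi$ to a set of positive two-dimensional measure on $\Delta_2$, while the cone over a positive-measure subset of $\Delta_2$, truncated to a bounded radial slab, has positive three-dimensional measure.

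Now fix $\tilde{\Omega}\subset\Delta_2$ with $\mu(\tilde{\Omega})>0$. Since $\Gamma$ is a cone (again by homogeneity) and almost every $\cP$-orbit is eventually absorbed into $\Gamma$, a pigeonhole argument produces an integer $N$ and a positive-measure subset $\tilde{\Omega}'\subseteq\tilde{\Omega}$ with $\tilde{\cP}^{\,N}(\tilde{\Omega}')\subseteq\pi(\Gamma)$. Lifting $\tilde{\cP}^{\,N}(\tilde{\Omega}')$ to a truncated cone $\Psi\subset\Gamma$ of positive measure with $\pi(\Psi)=\tilde{\cP}^{\,N}(\tilde{\Omega}')$, the intersection property of $\cP$ on $\Gamma$ yields $k$ with $\mu(\cP^{k+1}(\Psi)\cap\cP^{k}(\Psi))>0$. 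Projecting, and using $\pi(A\cap B)\subseteq\pi(A)\cap\pi(B)$ together with the semiconjugacy, gives $\mu\!\left(\tilde{\cP}^{\,k+1}(\tilde{\cP}^{\,N}\tilde{\Omega}')\cap\tilde{\cP}^{\,k}(\tilde{\cP}^{\,N}\tilde{\Omega}')\right)>0$; since images are monotone and $\tilde{\Omega}'\subseteq\tilde{\Omega}$, this is exactly the intersection property for $\tilde{\Omega}$ with index $k+N$. Lemma~\ref{exact-lem} then delivers the exactness of $\tilde{\cP}$.

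The homogeneity and disintegration computations are immediate; the only genuinely delicate point is bookkeeping the measure correspondence through $\pi$ and guaranteeing that the lifted set lands inside the absorbing set $\Gamma$, where alone the intersection property of $\cP$ is available. This is precisely what the three ingredients used above are designed to settle: homogeneity, so that $\Gamma$ and the level sets of $\pi$ are cones; absorption, so that a forward iterate of a positive-measure subset enters $\Gamma$; and monotonicity of the intersection property under forward iteration and under passage to subsets, so that establishing it on a suitable iterate of a subset suffices for the original set $\tilde{\Omega}$.
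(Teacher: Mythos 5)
Your proposal is correct and follows essentially the same route as the paper: establish the intersection property for $\tilde{\cP}$ by pushing forward the intersection property of $\cP$ (available on the absorbing cone $\Gamma$ via the conjugacy with $\cE_{\sigma}\times id$) through the radial projection, and then invoke Lemma~\ref{exact-lem} together with the ergodicity of $\tilde{\cP}$. You merely make explicit the details the paper leaves implicit (the semiconjugacy, the measure disintegration along rays, and the absorption/pigeonhole step ensuring the lifted set lies in $\Gamma$), which is a faithful elaboration rather than a different argument.
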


\subsection{A fully subtractive transformation} 
Let  the map $\cS:\{\la \in\R^3_+ : \la_1\leq \la_2\leq \la_3\} \to \{\la \in\R^3_+ : \la_1\leq \la_2\leq \la_3\}$ be defined by
$$ 
\cS (\la_1,\la_2,\la_3)= \pi(\la_1,\la_2-\la_1,\la_3-\la_1),
$$
where $\pi$ is a permutation arranging the coordinates in the nondecreasing order. In an earlier paper \cite{nogueira-miernowski} we showed that $\cS$ is not ergodic with respect to Lebesgue measure. However, one may normalize $\cS$ by imposing the last coordinate to be equal to one. We obtain a new transformation 
$$ 
\tilde{\cS} (\la_1,\la_2) =  \left\{ \begin{array}{lll}  \frac{1}{1-\la_1}(\la_1,\la_2-\la_1) & \text{if} & \la_1 \leq \la_2-\la_1, \\     \frac{1}{1-\la_1}(\la_2-\la_1,\la_1) & \text{if} & \la_2-\la_1 < \la_1\leq 1-\la_1, \\  \frac{1}{\la_1}(\la_2-\la_1,1-\la_1) & \text{if} & 1-\la_1 < \la_1,      \end{array} \right.  
$$
defined on the set $D=\{\la\in\R^2_+: 0\leq \la_1\leq \la_2\leq 1\}$. The map $\tilde{\cS}$ is ergodic with respect to Lebesgue measure \cite[Theorem 1.3]{nogueira-miernowski}. This comes from the fact that $\tilde{\cS}$ admits a nontrivial absorbing set on which its dynamics is conjugate to $\cE_{\pi}$. Now we may improve our earlier result.

\begin{cor} The normalized algorithm $\tilde{\cS}:D\to D$ is exact with respect to Lebesgue measure. \end{cor}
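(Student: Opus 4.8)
The plan is to follow exactly the scheme used for the normalized Poincar\'e algorithm $\tilde{\cP}$ in the preceding subsection: establish the intersection property for $\tilde{\cS}$ and then invoke Lemma \ref{exact-lem}. The map $\tilde{\cS}$ is ergodic by \cite[Theorem 1.3]{nogueira-miernowski}, and one checks directly from its piecewise linear-fractional definition that it is bi-measurable and nonsingular with respect to Lebesgue measure on $D$. Hence, by Lemma \ref{exact-lem}, exactness of $\tilde{\cS}$ is equivalent to the intersection property, so it suffices to verify the latter. Note that, in contrast with the Poincar\'e case, the conjugacy here is stated directly for the already normalized map $\tilde{\cS}$, so no separate projection step is needed.

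First I would recall the structure underlying the ergodicity of $\tilde{\cS}$: there is a nontrivial absorbing set $A\subset D$, forward invariant under $\tilde{\cS}$ and entered by almost every orbit, on which $\tilde{\cS}$ is measurably conjugate, via a nonsingular conjugacy $h$, to the projected Euclidean algorithm $\cE_{\pi}$ acting on $\Lambda^2=\{\la\in\R^2_+:\la_1\leq\la_2\}$. By Corollary \ref{sig-pi}, $\cE_{\pi}$ is exact, so by Lemma \ref{exact-lem} it satisfies the intersection property. Transporting this through $h$ and using the forward invariance of $A$ (so that $\tilde{\cS}$ and $\tilde{\cS}|_A$ agree on subsets of $A$), I obtain the intersection property for $\tilde{\cS}$ on subsets of $A$: for every $\Omega'\subset A$ of positive measure there exists $k\geq 1$ with $\mu(\tilde{\cS}^{k+1}(\Omega')\cap \tilde{\cS}^{k}(\Omega'))>0$.

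It then remains to propagate the intersection property from $A$ to all of $D$. Given any $\Omega\subset D$ of positive measure, the absorbing property yields an integer $m$ with $\mu(\Omega\cap \tilde{\cS}^{-m}(A))>0$; by nonsingularity the set $\Omega'=\tilde{\cS}^{m}(\Omega\cap \tilde{\cS}^{-m}(A))$ has positive measure and lies in $A$. Applying the intersection property on $A$ to $\Omega'$ and using $\tilde{\cS}^{j}(\Omega')\subset \tilde{\cS}^{m+j}(\Omega)$, I get $\mu(\tilde{\cS}^{m+k+1}(\Omega)\cap \tilde{\cS}^{m+k}(\Omega))>0$, which is precisely the intersection property for $\Omega$ with witness $m+k$. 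Lemma \ref{exact-lem} then yields exactness of $\tilde{\cS}$.

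The routine verifications, namely bi-measurability and nonsingularity of $\tilde{\cS}$ together with the nonsingularity of $h$ so that positive measure is preserved, are straightforward. The hard part will be the compatibility of the two reductions: transporting the intersection property simultaneously through the conjugacy $h$ (where one must note that $h$ identifies the relevant Lebesgue measure classes on $A$ and on $\Lambda^2$) and through the absorbing-set inclusion, making sure that the positive measure of the consecutive-image intersection survives both steps.
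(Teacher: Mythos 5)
Your proposal is correct and follows exactly the route the paper intends: the paper gives no written proof here beyond the one-sentence indication that $\tilde{\cS}$ is ergodic and conjugate to $\cE_{\pi}$ on a nontrivial absorbing set, and your argument (transfer the intersection property from $\cE_{\pi}$ through the conjugacy, propagate it from the absorbing set to all of $D$ via nonsingularity, then apply Lemma \ref{exact-lem}) is precisely the correct elaboration of that indication, parallel to the treatment of $\tilde{\cP}$.
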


\section{Interval exchange transformations and Rauzy induction} 

Throughout the remaining part of the paper let $n\geq 2$ be a fixed integer. Consider the Euclidean cone
$$
\mathcal{C}=\{\lambda =(\lambda_1,\ldots,\lambda_n)\in \mathbb{R}^n:\lambda_i>0, \; 1 \leq i \leq n,\}
$$  
and let $S$ be the group of permutations of the set $\{1,2,\ldots,n\}$.

\subsection{Interval exchanges}

Here our main reference is Veech [V1]. An exchange of $n$ intervals is a map which permutes $n$ given intervals. It is defined by a couple of parameters $(\lambda,\pi)\in \cC\times S$ in the following way.  Let $I^{\la}=[0,\Vert \lambda \Vert_1 )$, where $\Vert \lambda \Vert_1= \lambda_1+ \ldots + \lambda_n$. We set $\alpha_0(\lambda)=0$ and $\alpha_i(\lambda)=\lambda_1+\ldots  + \lambda_i$, for $1\leq i \leq n$. The points $\alpha_i(\lambda)$ partition the interval $I^{\la}$ into $n$ subintervals $I^{\la}_i=[\alpha_{i-1}(\la),\alpha_i(\la) )$ of length $\la_i$. Finally we use $\pi$ to permute those subintervals. We set $\lambda^{\pi}=(\lambda^{\pi}_1, \ldots , \lambda^{\pi}_n)$, where  $\lambda^{\pi}_i = \lambda_{\pi^{-1}(i)}$, for $1\leq i \leq n$. 

The $(\la,\pi)$-{\it interval exchange} is the one-one onto map $T=T_{(\lambda,\pi)}:I^{\la}\to I^{\la}$, defined by 
$$
T(x)=x- \alpha_{i-1}(\lambda) + \alpha_{\pi(i)-1}(\lambda^{\pi}), \; \mbox{ for } \; x \in I^{\lambda}_i, \; \mbox{ for } \; 1\leq i \leq n.
$$
The map $T$ acts as a translation on each subinterval $I^{\lambda}_i$ and thus $T$ preserves Lebesgue measure. Moreover we have $T(I^{\lambda}_i)= I^{\lambda^{\pi}}_{\pi(i)}$.

We say  that a permutation $\pi \in S$ is $ irreducible$, if $1\leq k\leq n$ and $\pi\{1,\ldots,k\}= \{1,\ldots,k\}$ imply $k=n$. In other words, for an irreducible permutation $\pi$,  if $x>0$ and $T_{(\lambda,\pi)}[0,x)=[0,x)$, then $x=\alpha_n(\lambda)$. We denote by $S_0$ the set of irreducible permutations of $S$.

If $\pi\in S$ is not irreducible, for every $\la\in\cC$ the corresponding $(\la,\pi)$-interval exchange may be seen as two independent exchanges of $k$ and $n-k$ intervals. In particular it is not ergodic with respect to Lebesgue measure. In what follows only irreducible permutations will be considered.

\subsection{Rauzy inductive process}

Here we follow the approach given in \cite[Section 2]{nru}. A vector $\la\in\cC$ is called {\it irrational} if its coordinates $\la_1,\ldots,\la_n$ are rationally independent. Let $T_{(\la,\pi)}$ be an interval exchange given by an irrational vector $\la$ and an irreducible permutation $\pi$. The so-called {\it Rauzy induction} assigns to $T_{(\la,\pi)}$ a first return map induced on a suitable subinterval of $I^{\la}$. We split $\mathcal{C}$ into two subcones   
$$
\mathcal{C}'=\{\lambda : \lambda_n\geq \lambda^{\pi}_n\} \; \mbox{ and } \;\mathcal{C}''=\{\lambda : \lambda^{\pi}_n \geq \lambda_n \}
$$
and define the induction for each of them separately.

If $\lambda \in \cC'$, we define
$$
T':[0, \alpha_{n-1}(\lambda^{\pi})) \rightarrow [0, \alpha_{n-1}(\lambda^{\pi}))
$$
to be the first return map induced by $T_{(\lambda,\pi)}$ on the interval $[0, \alpha_{n-1}(\lambda^{\pi}))$. A computation shows that $T'$ is still an $n$-interval exchange. The couple of parameters $(\la',\pi')\in\cC\times S$ corresponding to $T'$ is described as follows. Consider the $n\times n$-matrix
\be 
A_{\pi}'=\left( \begin{array}{ccccccc}
1 &   & & & & & \\
   & 1& & & & 0 & \\
   &    & & \ddots & && \\
   & 0 & &&&& \\
   &&&-1&&&1
\end{array} \right),
\ee
where $(A_{\pi}')_{n, \pi^{-1}n}=-1$. Then $\lambda'=A_{\pi}'\lambda$ and the permutation $\pi'$ is given by
\be \label{pi1}
\pi'(j)= 
\left\{ \begin{array}{ll}         
         \pi(j), & \mbox{if $\pi(j)\leq  \pi(n)$,} \\
         \pi(j)+1, & \mbox{if $\pi(n)<\pi(j)< n$,}\\
         \pi(n)+1, & \mbox{if $\pi(j)=n$.}
         \end{array}
\right.
\ee

If $\lambda \in \cC''$, we define 
$$
T'':[0, \alpha_{n-1}(\lambda)) \rightarrow [0, \alpha_{n-1}(\lambda))
$$
by inducing $T_{(\lambda,\pi)}$ on the interval $[0, \alpha_{n-1}(\lambda))$. Then $T''$ is also an $n$-interval exchange. We consider the $n\times n$-matrix
\be
A_{\pi}''=\left( \begin{array}{ccccccc}
1 &             &    & & & & \\
   & \ddots &    &  & && \\
   &              & 1&   &            & &   -1\\
   &              &0 & 0&\ldots  &0& 1\\
   &              &   & 1&            && \\
    &             &   &   & \ddots & & \\
    &             &   &   &             & 1& 0
\end{array} \right),
\ee
where $(A_{\pi}'')_{\pi^{-1}n,n}=-1$, and set $\lambda''=A_{\pi}''\lambda$. Let the permutation $\pi''$ be given by
\be \label{pi2}
\pi''(j)= 
\left\{ \begin{array}{ll}         
         \pi(j), & \mbox{if $j \leq  \pi^{-1}(n)$} \\
         \pi(n), & \mbox{if $j= \pi^{-1}(n)+1$}\\
         \pi(j-1), & \mbox{otherwise.}
         \end{array}
\right.
\ee
We have $T''=T_{(\lambda'',\pi'')}$.

The following lemma lets us iterate the inductive process described above.
 
\begin{lemma}[\cite{nru}, Lemma 2.4] Let $\la$ be irrational and $\pi$ irreducible. Then both $\la',\la''$ are irrational and both $\pi',\pi''$ irreducible. \end{lemma}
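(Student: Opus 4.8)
The plan is to separate the two claims: the irrationality of $\la'$ and $\la''$ is a general consequence of the matrices $A_{\pi}'$ and $A_{\pi}''$ being unimodular, whereas the irreducibility of $\pi'$ and $\pi''$ is a purely combinatorial statement about the formulas (\ref{pi1}) and (\ref{pi2}), independent of $\la$.

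First I would check that $A_{\pi}'$ and $A_{\pi}''$ belong to $SL(n,\Z)$. Each is the identity matrix altered by a single off-diagonal entry equal to $-1$; since $\pi$ is irreducible and $n\ge 2$ we have $\pi^{-1}(n)\ne n$, so this entry lies strictly off the diagonal, the matrix is a shear of determinant one, and its inverse is again integral. The point is then the elementary remark that an element $M\in GL(n,\Z)$ sends a vector with rationally independent coordinates to another such vector: a nontrivial rational relation $c^{T}(M\la)=0$ with $c\ne 0$ would read $(M^{T}c)^{T}\la=0$, and $M^{T}c$ is a nonzero rational vector since $M^{T}$ is invertible, contradicting the irrationality of $\la$. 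Applying this to $M=A_{\pi}'$ and $M=A_{\pi}''$ shows that $\la'=A_{\pi}'\la$ and $\la''=A_{\pi}''\la$ are irrational.

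For the irreducibility of $\pi'$ I would argue by contradiction, converting a proper $\pi'$-invariant initial segment into a proper $\pi$-invariant one. Suppose $\pi'\{1,\dots,k\}=\{1,\dots,k\}$ for some $1\le k<n$, and set $m=\pi(n)$; note from the first line of (\ref{pi1}) that $\pi'(n)=\pi(n)=m$. If $k\ge m$, then $\pi'(n)=m\le k$, so the position $n$ would lie in the invariant segment $\{1,\dots,k\}$, which is impossible because $k<n$. If $k<m$, then by inspection of the three cases of (\ref{pi1}) the values not exceeding $k$ are produced by exactly the same indices for $\pi'$ as for $\pi$, whence $(\pi')^{-1}\{1,\dots,k\}=\pi^{-1}\{1,\dots,k\}$; the assumed invariance then makes $\{1,\dots,k\}$ a proper $\pi$-invariant segment, contradicting the irreducibility of $\pi$. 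Hence $\pi'$ has no proper invariant initial segment and is irreducible.

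The permutation $\pi''$ is treated in the same way. One option is to repeat the case analysis for the formula (\ref{pi2}); a shorter route is to observe that inversion $\pi\mapsto\pi^{-1}$ carries the bottom operation on $\pi$ to the top operation on $\pi^{-1}$, in the precise form $(\pi'')^{-1}=(\pi^{-1})'$, and that irreducibility is invariant under $\pi\mapsto\pi^{-1}$ because $\pi(A)=A$ and $\pi^{-1}(A)=A$ are equivalent for every set $A$. Combined with the result for the top operation, this yields the irreducibility of $\pi''$. I expect the only delicate point of the whole argument to be the combinatorial bookkeeping in this irreducibility step, namely keeping track of how the value shift in (\ref{pi1})--(\ref{pi2}) displaces an invariant initial segment; once the cases are set up correctly the contradiction is immediate, and the irrationality half is entirely routine.
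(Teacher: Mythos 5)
Your proposal is correct and complete: the unimodular-shear argument for irrationality, the observation that $\pi'(j)\le k$ iff $\pi(j)\le k$ when $k<\pi(n)$, and the duality $(\pi'')^{-1}=(\pi^{-1})'$ all check out. The paper itself gives no proof of this statement (it is imported as Lemma 2.4 of \cite{nru}), and your argument is essentially the standard one found there, so there is nothing to contrast.
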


Let $\pi_0\in S_0$ be a fixed permutation and define $\mathcal{R}(\pi_0)$ to be the set of all permutations $\pi\in S_0$ which can be reached by the successive iterations of the Rauzy induction starting at some $T_{(\lambda,\pi_0)}$, $\la\in\cC$.  The set $\mathcal{R}(\pi_0)$ is called the {\it Rauzy class of permutations of $\pi_0$}, or the {\it Rauzy class of} $\pi_0$ for short.  

In order to study the possible sequences of permutations arising from this process,  we construct a directed graph $\mathcal{G}(\pi_0)$ whose nodes are the permutations $\pi\in \mathcal{R}(\pi_0)$. For every $\pi\in\cR(\pi_0)$ an arrow goes from $\pi$ to each of $\pi'$ and $\pi''$ given by (\ref{pi1}) and (\ref{pi2}) respectively. For $n=2$ we have only one Rauzy class whose graph consists of one node with two loops attached. The following lemma concerns the structure of the Rauzy graph for $n\geq 3$.

\begin{lemma}[\cite{nru}, Lemma 2.2 and 2.4] Let $\pi_0 \in S_0$. For every $\pi_1, \pi_2 \in \mathcal{G}(\pi_0)$ there is a path in $\mathcal{G}(\pi_0)$ starting at $\pi_1$ and reaching $\pi_2$. Moreover, every $\pi\in\cG(\pi_0)$ has exactly two followers and two predecessors in $\cG(\pi_0)$. \end{lemma}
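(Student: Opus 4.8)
The plan is to split the statement into a local assertion (each node has out-degree and in-degree exactly two) and a global one (strong connectivity), and to derive both from a single structural fact: the two Rauzy moves are invertible. Write $a\colon\pi\mapsto\pi'$ for the top move (\ref{pi1}) and $b\colon\pi\mapsto\pi''$ for the bottom move (\ref{pi2}). By the very definition of the Rauzy class, both $a$ and $b$ send $\cR(\pi_0)$ into itself: from any $\pi\in\cR(\pi_0)$ one reaches $\pi'$ by one induction step with $\lambda\in\cC'$ and $\pi''$ with $\lambda\in\cC''$, and both subcones are nonempty. Thus $\cG(\pi_0)$ is the directed graph on the finite set $\cR(\pi_0)$ whose two outgoing edges from $\pi$ point to $a(\pi)$ and $b(\pi)$, and the whole lemma reduces to understanding the maps $a,b$ as transformations of this finite set.

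For the degree count I would first show that $a$ and $b$ are injective. This is a direct inspection of the formulas. From (\ref{pi1}) one checks that $\pi'(n)=\pi(n)$, so the value $v:=\pi(n)$ is read off from $\pi'$; the unique position carrying the value $v+1$ in $\pi'$ is exactly the position $\pi^{-1}(n)$ of $n$ in $\pi$, and the shift rule of (\ref{pi1}) then reconstructs $\pi$ from $\pi'$ (values $\le v$ unchanged, values $\ge v+2$ decreased by one). The analogous inspection of (\ref{pi2}) inverts $b$. Being injective self-maps of the finite, forward-invariant set $\cR(\pi_0)$, each of $a,b$ restricts to a permutation of $\cR(\pi_0)$; in particular every $\sigma$ has a unique $a$-predecessor $a^{-1}(\sigma)$ and a unique $b$-predecessor $b^{-1}(\sigma)$. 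To upgrade ``at most two'' to ``exactly two'', I would verify from (\ref{pi1}) and (\ref{pi2}) that $\pi'\neq\pi''$ for every irreducible $\pi$ when $n\geq 3$ (irreducibility forcing $\pi(n)\neq n$ is what prevents the degenerate coincidence; the case $n=2$, with its single node and two loops, is genuinely excluded). Distinctness of the two images gives out-degree $2$; applying it to a would-be common source $\tau$ of a coincidence $a^{-1}(\sigma)=b^{-1}(\sigma)=\tau$ yields $\tau'=\tau''$, a contradiction, so the two predecessors also differ and the in-degree is $2$.

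For strong connectivity I would exploit finiteness. Since $a$ and $b$ are permutations of the finite set $\cR(\pi_0)$, each has finite order, so $a^{-1}$ and $b^{-1}$ are themselves \emph{positive} powers of $a$ and $b$; hence every backward step $\sigma\mapsto a^{-1}(\sigma)$ or $\sigma\mapsto b^{-1}(\sigma)$ is realizable by a block of forward edges in $\cG(\pi_0)$. By definition each $\pi\in\cR(\pi_0)$ is forward-reachable from $\pi_0$, say $\pi=(c_\ell\circ\cdots\circ c_1)(\pi_0)$ with $c_i\in\{a,b\}$; peeling the letters off one at a time, each inverse $c_i^{-1}$ being forward-realizable, produces a forward path from $\pi$ back to $\pi_0$. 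Concatenating a path $\pi_1\to\pi_0$ with a path $\pi_0\to\pi_2$ gives the required path from $\pi_1$ to $\pi_2$. (Equivalently, one may observe that $\cG(\pi_0)$ is weakly connected, being forward-reachable from $\pi_0$, and balanced with in-degree $=$ out-degree $=2$, hence Eulerian and a fortiori strongly connected.)

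I expect the main obstacle to be the invertibility step: turning the schematic ``inspection of (\ref{pi1}) and (\ref{pi2})'' into a clean proof that $a$ and $b$ are injective and preserve irreducibility. The shift-of-labels bookkeeping in the two piecewise formulas is fiddly and is the place where an error would most plausibly hide. The distinctness $\pi'\neq\pi''$ for $n\geq 3$ is the other delicate point, since it is exactly the feature that fails for $n=2$ and is what forces both degrees to equal two. Once these two combinatorial facts are secured, the finiteness-plus-invertibility argument for strong connectivity is routine.
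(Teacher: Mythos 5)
The paper offers no proof of this lemma at all --- it is imported wholesale from \cite{nru} (Lemmas 2.2 and 2.4) --- so there is nothing internal to compare against; your proposal is a self-contained argument and, after checking the two points you flag, it is correct. The invertibility step goes through exactly as you sketch: from (\ref{pi1}) one has $\pi'(n)=\pi(n)$, the value $\pi(n)+1$ occurs in $\pi'$ only at the position $\pi^{-1}(n)$, and the remaining values split into $\{1,\dots,\pi(n)\}$ (unchanged) and $\{\pi(n)+2,\dots,n\}$ (shifted up by one), so $\pi$ is recovered; for (\ref{pi2}), the value $n$ keeps its position $\pi^{-1}(n)$ in $\pi''$, the entry immediately to its right is $\pi(n)$, and deleting it and re-appending it at the end inverts the move. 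The genuinely delicate claim is $\pi'\neq\pi''$ for irreducible $\pi$ with $n\geq 3$, and your parenthetical slightly misidentifies the mechanism: irreducibility gives $\pi(n)\neq n$, but the real case split is on $\pi^{-1}(n)$. If $\pi^{-1}(n)\leq n-2$ then $\pi''(n)=\pi(n-1)\neq\pi(n)=\pi'(n)$ and you are done; if $\pi^{-1}(n)=n-1$ (the loop permutations of Lemma \ref{loop}) then $\pi''=\pi$, and $\pi'=\pi$ would force $\pi'(n-1)=\pi(n)+1=n$, i.e.\ $\pi(n)=n-1$, so that $\pi$ fixes $\{n-1,n\}$ and hence $\{1,\dots,n-2\}$, contradicting irreducibility when $n\geq 3$. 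With that settled, the degree count and the strong-connectivity argument (either via $a^{-1},b^{-1}$ being positive powers of $a,b$ on the finite class, or via the weakly-connected-and-balanced-hence-Eulerian observation, which has the advantage of not even needing $\pi'\neq\pi''$) are both sound. One last pedantic point: your argument silently uses that $\pi',\pi''$ remain irreducible, which the paper also only cites (\cite{nru}, Lemma 2.4); it is needed to know the moves stay inside $S_0$, and should be acknowledged as an ingredient rather than folded into ``the very definition of the Rauzy class.''
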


\subsection{Rauzy induction algorithm}

Let $ \mathcal{R}$ be a Rauzy class in $S_0$. The inductive process described in the previous subsection defines an algorithm $\cI$ acting on the parameter space $\cC\times \cR$ by 
\be \label{I}
\mathcal{I}:  (\lambda,\pi) \in \mathcal{C} \times \mathcal{R}  \longmapsto \left\{ \begin{array}{lll} (\lambda',\pi')  & \text{if} & \lambda_n > \la_n^{\pi}, \\ (\la'',\pi'') & \text{if} & \la_n <\la_n^{\pi}. \end{array} \right.
\ee 
It is called the {\it Rauzy induction} of interval exchange transformations. 

The space $\mathcal{C} \times \mathcal{R}$ is endowed with Lebesgue measure denoted by $\mu$.

\begin{thm}[\cite{veech2}, Theorem 1.6] 
For every Rauzy class $\mathcal{R}$, the map $\mathcal{I}$ is ergodic on $\mathcal{C} \times \mathcal{R}$ with respect to Lebesgue measure.\end{thm}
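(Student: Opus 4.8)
The plan is to follow the philosophy already used for the Euclidean algorithm, where ergodicity of $\cE$ was extracted from ergodicity of the linear $SL(2,\Z)$-action through Nogueira's orbit equivalence. For $n\geq 3$ the semigroup generated by the induction matrices is awkward to describe explicitly, so I would instead work directly with the Markov structure of $\cI$ and a bounded-distortion estimate, in the spirit of the R\'{e}nyi--Knopp ergodicity criterion for fibred systems. The starting observation is that $\cI$ is a piecewise-linear Markov map: for a fixed $\pi_0\in\cR$ the admissible length-$k$ paths of the Rauzy graph $\cG(\cR)$ issuing from $\pi_0$ index a decomposition of $\cC\times\{\pi_0\}$ into finitely many subcones, on each of which the first $k$ symbols of the induction are prescribed, $\cI^k$ is linear and acts on the $\la$-coordinate through a product of the elementary matrices $A_{\pi}',A_{\pi}''\in GL(n,\Z)$, and $\cI^k$ maps the subcone bijectively onto $\cC\times\{\pi\}$ for the endpoint $\pi$ of the path. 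This is the exact $n$-dimensional analogue of the partitions $\cP^{(k)}$ of Section 3.

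The proof then rests on two properties of these partitions. The first is \emph{generation}: for Lebesgue-almost every $(\la,\pi)$ the nested cylinders containing it shrink to the single radial ray $\R_+\la$, so that the cylinder partitions separate points and generate the Borel $\sigma$-algebra. This is the counterpart of the identity $\cap_{k}C_{\la}^{(k)}=\{\alpha\la:\alpha\geq 0\}$ of Section 3 and should follow from the unbounded growth of the norms of the matrix products, in the manner of Theorem \ref{cones}. The second is \emph{bounded distortion}: the normalized (projectivized) Jacobian of $\cI^k$ is comparable, up to a universal multiplicative constant, to its average over a cylinder, at least along paths that traverse a loop of $\cG(\cR)$ whose product matrix is strictly positive. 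Here positivity is decisive, since a strictly positive matrix contracts the projective cone in the Hilbert metric (Birkhoff's theorem), and this contraction is what bounds the distortion; the existence of such positive loops is supplied by the strong connectivity of the Rauzy graph together with the loop statement of Section 7.

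Granting generation and bounded distortion, ergodicity follows by the classical argument. If $\Om\subset\cC\times\cR$ is $\cI$-invariant with $\mu(\Om)>0$, bounded distortion yields a uniform constant $c>0$ with $\mu(\Om\cap C)\geq c\,\mu(C)$ for every cylinder $C$ carrying a positive loop; since the cylinders generate, a density point of $\Om$ lies in cylinders on which the relative measure of $\Om$ tends to $1$, and invariance propagates this to force $\mu(\cC\times\cR\setminus\Om)=0$. The permutation coordinate enters only as finite bookkeeping once the Rauzy graph is known to be strongly connected.

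The step I expect to be the genuine obstacle is bounded distortion. In the case $n=2$ it reduces to the elementary continued-fraction estimates behind Theorem \ref{cones}, but for $n\geq 3$ it requires a true geometric Perron--Frobenius argument: one must exhibit strictly positive products of the elementary matrices along loops of every Rauzy graph and then quantify the resulting Hilbert-metric contraction uniformly. Generation is comparatively soft, and the reduction to a statement about the linear action, while attractive, is less transparent than in the two-dimensional setting and is best avoided in favour of the distortion route.
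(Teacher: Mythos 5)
The paper offers no proof of this statement: it is quoted verbatim from Veech (\cite{veech2}, Theorem 1.6) and used as a black box, so there is no internal argument to compare yours against. Your sketch is the classical Veech--Kerckhoff route (Markov cylinders indexed by paths in the Rauzy graph, positive loops, bounded distortion via Hilbert-metric contraction, then a Knopp/Lebesgue-density argument), and at that level it is the right circle of ideas.

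There are, however, two genuine gaps. First, the argument you describe --- bounded distortion of the \emph{projectivized} Jacobian plus generation of the $\sigma$-algebra by cylinders --- proves ergodicity of the normalized induction $\tilde{\cI}$ on $\Delta_{n-1}\times\cR$, not of the homogeneous map $\cI$ on $\cC\times\cR$ that the theorem asserts. The homogeneous map is dissipative; $\cI^k$ restricted to a cylinder actually preserves Lebesgue measure (the matrices lie in $GL(n,\Z)$), so there is no Jacobian to control there, and the extra radial degree of freedom is exactly where ergodicity becomes delicate. For $n=2$ the paper deduces ergodicity of $\cE$ on the cone from ergodicity of the full linear $SL(2,\Z)$-action via the orbit-equivalence statement of \cite{nogueira}, Proposition 8.1 --- not from ergodicity of the Gauss map alone --- and your sketch never addresses the analogous passage for $n\geq 3$: an $\cI$-invariant set could a priori be a nontrivial union of radial rays. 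Second, the positivity input is not supplied by the ingredients you cite: the loop permutations of Section 7 give loops whose matrices are elementary (a single column addition), hence far from strictly positive, and strong connectivity of $\cG(\pi_0)$ by itself does not yield a strictly positive product; the existence of a positive loop in every Rauzy class is a separate combinatorial lemma of Veech that must be proved. Relatedly, calling generation ``comparatively soft'' understates it: the shrinking of cylinders to rays for almost every $\la$ is the paper's Lemma \ref{3.5}, which the paper itself notes is equivalent to the Masur--Veech unique ergodicity theorem.
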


In order to illustrate the definition of $\cI$, we will now describe explicitly its action in the easiest cases $n=2,3$. In what follows, we represent the permutations in the form $\pi=(\pi^{-1}(1),\ldots,\pi^{-1}(n))$.
\\

\noindent {\bf 1.} In the case of $n=2$, we have only one irreducible permutation on two letters, the transposition $(2,1)$, which constitutes its own Rauzy class. The action of $\cI$ on the second coordinate is thus trivial. On the first coordinate $\cI$ acts as the Euclidean algorithm $\cE$ defined by (\ref{E1}):
$$ \cI (\la, (2,1)) = \left\{ \begin{array}{ll}         
         
         ((\lambda_1-\lambda_2,\lambda_2), (2,1)), & \mbox{if $\lambda_1 >\lambda_2$}, \\
         ((\lambda_1,\lambda_2-\lambda_1), (2,1)), & \mbox{if $\lambda_2 > \lambda_1$}.
\end{array}
\right. $$
We have the corresponding Rauzy graph:

\vspace{0.5cm} 

\begin{centering}
\begin{tikzpicture}[->,>=stealth',shorten >=1pt,auto,node distance=2.8cm,
                    semithick]

  \node[state] (A)                    {$21$};

\path  (A) edge [loop right]        (A);

     \path    (A) edge [loop left]  (A);

\end{tikzpicture}

\end{centering} 

\vspace{0.5cm}

\noindent {\bf 2.} For $n=3$ we also have an unique Rauzy class that contains all irreducible permutations on three letters, namely $(231)$, $(321)$, $(312)$. The Rauzy induction is decribed as follows:
$$
\mathcal{I}(\lambda,(2,3,1))=
\left\{ \begin{array}{ll}         
         ((\lambda_1,\lambda_2,\lambda_3-\lambda_1), (2,3,1)), & \mbox{if $\lambda_3 > \lambda_1$}, \\
         ((\lambda_1-\lambda_3,\lambda_3,\lambda_2), (3,2,1)), & \mbox{if $\lambda_1 >\lambda_3$},
\end{array}
\right.
$$ 
$$
\mathcal{I}(\lambda,(3,2,1))=
\left\{ \begin{array}{ll}         
         ((\lambda_1,\lambda_2,\lambda_3-\lambda_1), (3,1,2)), & \mbox{if $\lambda_3 > \lambda_1$}, \\
         ((\lambda_1-\lambda_3,\lambda_3,\lambda_2), (2,3,1)), & \mbox{if $\lambda_1>\lambda_3$},
\end{array}
\right.
$$
$$
\mathcal{I}(\lambda,(3,1,2))=
\left\{ \begin{array}{ll}         
         ((\lambda_1,\lambda_2,\lambda_3-\lambda_2), (3,2,1)), & \mbox{if $\lambda_3 > \lambda_2$}, \\
         ((\lambda_1,\lambda_2-\lambda_3,\lambda_3), (3,1,2)), & \mbox{if $\lambda_2>\lambda_3$}.
\end{array}
\right.
$$

From this expression the Rauzy graph can be deduced.

\vspace{0.5cm}

\begin{centering}

\begin{tikzpicture}[->,>=stealth',shorten >=1pt,auto,node distance=2.8cm,
                    semithick]

  \node[state] (A)                 {$231$};
 \node[state] (B)    [right of=A]                {$321$};
  \node[state] (C)    [right of=B]                {$312$};

\path  (C) edge [loop right]        (C);

     \path    (A) edge [loop left]  (A);
                
    \path (A) edge [out=30, in=150]  (B);
    
     \path (B) edge [in=-30, out=-150] (A);
     
      \path (B) edge [out=30, in=150] (C);
      
       \path (C) edge [in=-30, out=-150] (B);

\end{tikzpicture}

\end{centering}

\vspace{0.5cm}

\noindent {\bf 3.} One may check that for $n=4$ we get two distinct Rauzy classes of irreducible permutations, one generated by $(4321)$ and the other by $(3412)$.

\section{Rauzy classes}

We will need more information about permutations within a given Rauzy class. A permutation  $\pi \in S_0$ is said to be $standard$, if $\pi(1)=n$ and $\pi(n)=1$. 

\begin{lemma}[\cite{rauzy}] \label{sevenone} Every Rauzy class contains a standard permutation. \end{lemma}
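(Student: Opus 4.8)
The plan is to reach a standard permutation inside a given Rauzy class $\cR$ by a finite sequence of Rauzy operations, using the freedom — guaranteed by the fact that every node of the Rauzy graph has both a top-successor $\pi'$ and a bottom-successor $\pi''$, both in $\cR$ — to apply either operation at each step. I would first read off from (\ref{pi1}) and (\ref{pi2}) the combinatorial action of the two operations on the word $w=(w_1,\dots,w_n)=(\pi^{-1}(1),\dots,\pi^{-1}(n))$. Inspection of (\ref{pi1}) shows that the top operation deletes the last letter $w_n$ and reinserts it immediately after the letter $n$; hence it fixes the first letter $w_1$ and the position of $n$, and cyclically permutes the letters lying to the right of $n$. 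Dually, a short computation from (\ref{pi2}) shows that the bottom operation leaves every position frozen and merely relabels the letters by the permutation that fixes $1,\dots,w_n$ and cyclically shifts $w_n+1,\dots,n$ (with $n\mapsto w_n+1$); in particular it keeps $w_n$ and the position of the letter $1$ fixed, and transports the letter $n$ onto the slot previously occupied by $n-1$. By definition $\pi$ is standard exactly when $w_1=n$ and $w_n=1$.

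The easy half is to finish once one of these two equalities holds. Suppose we have reached $w_n=1$, i.e. the letter $1$ occupies the last position. Every subsequent bottom operation then has $w_n=1$, so it keeps all positions frozen and relabels $2,\dots,n$ by the cycle $2\mapsto3\mapsto\cdots\mapsto n\mapsto2$; consequently the first letter runs successively through all of $\{2,\dots,n\}$, and after finitely many steps we obtain $w_1=n$ while $w_n=1$ is preserved, i.e. a standard permutation. (Symmetrically, from $w_1=n$ one iterates the top operation to push the letter $1$ into the last slot.) It therefore suffices to arrange $w_n=1$, and this is immediate in the favourable configuration where the letter $n$ precedes the letter $1$ in $w$: then $1$ lies in the block to the right of $n$, and iterating the top operation — which fixes the position of $n$ and rotates that block — brings $1$ into the last position.

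The real work is the opposite configuration, in which the letter $1$ precedes $n$; here both operations fix the position of $1$, so no single move makes visible progress, and the point is to manoeuvre the letter $n$ to the left of $1$. To do this I would argue by extremality: choose in $\cR$ a permutation for which the position $\ell=\pi(1)$ of the letter $1$ is maximal. If $\ell=n$ we are done; otherwise, since the top operation raises $\ell$ by one whenever $1$ lies strictly to the right of $n$ and is not last, maximality forces the letter $1$ to lie to the \emph{left} of $n$. I would then reach a contradiction by producing a permutation with larger $\ell$: using top operations to place a sufficiently small letter at the end (these fix everything to the left of $1$, hence preserve $\ell$) followed by a bottom operation, one raises by one the largest letter sitting to the left of $1$; iterating, and invoking the relocation property that a bottom operation carries $n$ onto the current site of $n-1$, eventually drives the letter $n$ into the region to the left of $1$, after which the top operation increases $\ell$ beyond its supposed maximum. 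Making this escape procedure terminate is the crux of the argument, and it is exactly here that irreducibility of $\pi$ is indispensable: the procedure can stall only if the letters to the left of $1$ form a block stable under all admissible moves, that is, an invariant initial segment, which $\pi\in S_0$ forbids. I expect the careful bookkeeping behind this termination to be the main obstacle.
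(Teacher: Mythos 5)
The paper offers no proof of this lemma --- it is quoted from Rauzy's article --- so I am judging your argument on its own. Your dictionary for the two operations on the word $w=(\pi^{-1}(1),\dots,\pi^{-1}(n))$ is correct, and so are the two easy reductions: from $w_n=1$ the bottom operation cycles the first letter through $\{2,\dots,n\}$ while preserving $w_n=1$, and $w_n=1$ is reachable by top operations whenever the letter $n$ precedes the letter $1$. The extremality set-up (maximize $\ell=\pi(1)$ over the class; at the maximizer either $\ell=n$ or the letter $1$ precedes the letter $n$) is also sound, since the whole class is reachable from any of its elements.

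The gap is where you say it is, but it is a genuine gap and your proposed way of closing it is wrong as stated. Write $S$ for the set of letters at positions $1,\dots,\ell$ and $M=\max S<n$. To raise $M$ you must execute a bottom move whose last letter is $<M$, and the only letters you can rotate into the last slot are those currently to the right of the letter $n$; nothing guarantees any of them is $<M$. You claim that permanent failure ("stalling") would make the letters to the left of $1$ an invariant initial segment, contradicting irreducibility. But stalling only makes the \emph{positions} $1,\dots,\ell$ carry a fixed set of letters, which is automatic (both moves freeze those positions) and contradicts nothing; irreducibility is violated only if that set equals $\{1,\dots,\ell\}$, which stalling does not force. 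A correct completion exists along your lines but needs two further ideas you do not supply: first, $\max S$ is non-decreasing along every edge of the finite strongly connected Rauzy graph, hence constant on the class, whence every letter $<M$ has a globally constant position strictly to the left of the letter $n$; second, since $S\neq\{1,\dots,\ell\}$ by irreducibility, some letter $\le\ell$ sits frozen at a position $q_1>\ell$ still left of $n$, and iterating the same argument on the position-prefix $1,\dots,q_1$ produces a strictly increasing sequence $\ell<q_1<q_2<\cdots\le n-2$, which is the desired contradiction. That iteration over growing stable prefixes --- not a single appeal to irreducibility --- is the actual content of Rauzy's lemma, so as written the proposal reduces the statement to its hard kernel without proving it.
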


The notion of standard permutation was rediscovered in \cite{nru}, where it was noticed that  the existence of standard permutations  in every Rauzy class was a central tool to prove the weak-mixing property of interval exchanges (see also \cite{avila-forni}). Here standard permutations will also be used. First we prove a  technical lemma which concerns permutations which are fixed by Rauzy induction. 

\begin{lemma} \label{loop} Let $\pi \in S_0$ be such that $\pi(n-1)=n$ and $\pi''$ be the permutation defined by (\ref{pi2}). Then $\pi''=\pi$. \end{lemma}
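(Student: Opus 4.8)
The plan is to verify directly from the definition \eqref{pi2} that $\pi''$ agrees with $\pi$ at every point $j \in \{1,\ldots,n\}$, using the single hypothesis $\pi(n-1)=n$. The crucial observation is that $\pi(n-1)=n$ means exactly that $\pi^{-1}(n)=n-1$, so the three cases in \eqref{pi2} are governed by the threshold $\pi^{-1}(n)=n-1$. I would substitute this value and check that the piecewise formula collapses to the identity substitution $\pi''=\pi$.

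Concretely, I would split into the three ranges appearing in \eqref{pi2}. First, for $j \leq \pi^{-1}(n) = n-1$, the definition gives $\pi''(j)=\pi(j)$ immediately, so these $n-1$ values already agree. Second, the middle case occurs when $j = \pi^{-1}(n)+1 = n$, and there \eqref{pi2} prescribes $\pi''(n) = \pi(n)$; I need to confirm this equals $\pi(n)$, which it does verbatim. Third, the ``otherwise'' case would apply to indices $j > \pi^{-1}(n)+1 = n+1$, but no such $j$ exists in $\{1,\ldots,n\}$, so this branch is vacuous. Assembling the three observations yields $\pi''(j)=\pi(j)$ for all $j$, hence $\pi''=\pi$.

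The only genuine content of the argument is the translation of the hypothesis $\pi(n-1)=n$ into $\pi^{-1}(n)=n-1$, which pins down the threshold in \eqref{pi2}; once that is done the verification is purely a matter of reading off the three cases. I expect the main (and only) subtlety to be making sure the boundary index $j=n$ lands in the middle case rather than being ambiguous, and confirming that the second line of \eqref{pi2} indeed returns $\pi(n)$, matching the original permutation at its last argument. Since the third branch is empty, there is no real obstacle here; this is a short bookkeeping lemma whose role, as the surrounding text indicates, is to exhibit permutations fixed by the $\pi''$-operation of Rauzy induction, which will later guarantee the existence of loops in the Rauzy graph.
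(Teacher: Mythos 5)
Your proof is correct and follows essentially the same route as the paper's: translate the hypothesis $\pi(n-1)=n$ into $\pi^{-1}(n)=n-1$ and read off the three cases of (\ref{pi2}), the last being vacuous. If anything, your version is slightly more careful than the paper's, which writes the middle case as $1$ rather than $\pi(n)$ (harmless for the conclusion, but your reading is the cleaner one).
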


\begin{proof} We have $\pi^{-1}(n)=n-1$. By (\ref{pi2}), $\pi''$ reduces to 
$$
\pi''(j)= 
\left\{ \begin{array}{ll}         
         \pi(j), & \mbox{if $j \leq  n-1$}, \\
         1, & \mbox{if $j= n-1+1=n$},
                 \end{array}
\right.
$$ 
which implies $\pi''=\pi$. \end{proof}

The above lemma proves that, at such node, the Rauzy graph has a loop. We call {\it loop permutation} an irreducible permutation $\pi$ with $\pi(n-1)=n$.

\begin{lemma} \label{7.3} Every Rauzy class contains a  loop permutation.\end{lemma}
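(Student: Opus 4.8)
The plan is to start from a standard permutation, which exists in the class by Lemma \ref{sevenone}, and to reach a loop permutation by applying only Rauzy moves of the first type (those producing $\pi'$ via (\ref{pi1})). I represent a permutation by its word $(\pi^{-1}(1),\dots,\pi^{-1}(n))$. A standard permutation $\pi$ satisfies $\pi(1)=n$ and $\pi(n)=1$; the second condition says exactly that the word begins with the symbol $n$, i.e. $\pi^{-1}(1)=n$. A loop permutation is one whose word ends with $n-1$, since $\pi(n-1)=n$ is equivalent to $\pi^{-1}(n)=n-1$. So the goal is to move the symbol $n-1$ into the last slot of the word while keeping the permutation in the class.

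The key computation is to determine the action of a first-type move on a permutation whose word begins with $n$ (equivalently $\pi(n)=1$). First I would check, directly from (\ref{pi1}), that this property is preserved: the only index $j$ with $\pi(j)\le\pi(n)=1$ is $j=n$, and (\ref{pi1}) leaves its value fixed, so $\pi'(n)=1$ as well. Next I would read off the effect on the tail $(\pi^{-1}(2),\dots,\pi^{-1}(n))$: the loser $\pi^{-1}(n)$, which is the last symbol of the word, is excised from the end and reinserted immediately after the symbol $n$, i.e. into the second slot, while the intermediate symbols shift one place to the right. Hence a first-type move carries the word $(n,x_2,\dots,x_{n-1},x_n)$ to $(n,x_n,x_2,\dots,x_{n-1})$; it is simply a cyclic rotation of the length-$(n-1)$ tail.

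With this in hand the conclusion is immediate. Starting from a standard permutation, the symbol $n-1$ sits somewhere in the tail, and repeated first-type moves rotate the tail cyclically; after at most $n-1$ moves the symbol $n-1$ occupies the last slot, producing a permutation with $\pi(n-1)=n$, i.e. a loop permutation. Every intermediate permutation is irreducible, since Rauzy moves preserve irreducibility, and lies in the same Rauzy class, being obtained from a member of the class by Rauzy induction; thus the loop permutation produced belongs to the class. For $n=2$ the single irreducible permutation $(2,1)$ already satisfies $\pi(1)=2=n$, so it is a loop permutation and the statement is trivial there.

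I expect the only real work to be the verification that the first-type move acts as a cyclic rotation of the tail when the word begins with $n$, that is, correctly extracting the combinatorial meaning of (\ref{pi1}) in this configuration and confirming both that $n$ stays in front and that $n-1$ is swept through every tail position in finitely many steps. Everything else (existence of the standard permutation, invariance of irreducibility, and closure of the class under the moves) is supplied by the lemmas already established.
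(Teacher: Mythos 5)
Your proof is correct and is essentially the paper's own argument: both start from a standard permutation supplied by Lemma \ref{sevenone} and apply the first-type move (\ref{pi1}) repeatedly until the preimage of $n$ reaches position $n-1$. Your observation that this move cyclically rotates the tail of the word $(\pi^{-1}(1),\dots,\pi^{-1}(n))$ while fixing the leading symbol $n$ is exactly the content of the paper's computation $\sigma^{(j)}(n-1)=\sigma(n-1)+j$, so the two proofs differ only in bookkeeping.
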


\begin{proof}  Let $\cR$ be a Rauzy class. By  Lemma \ref{sevenone}, $\cR$ contains  a standard permutation $\sigma$. We will show that there is a loop permutation $\pi$  in the orbit of $\sigma$ under $\cI$.  The idea of the proof is depicted in the figures in \cite[p.1192]{nru} and corresponds to the case $i=n-1$ therein.

Let $\lambda\in \mathcal{C}$ satisfy $\lambda_n> \lambda_1+\ldots + \lambda_{n-1}$. For $k\geq 1$ set $(\la^{(k)},\sigma^{(k)})=\cI^k(\la,\sigma)$. Since $\sigma$ is standard, we have $\sigma(n)=1<\sigma(n-1)<n=\sigma(1)$. For $1\leq j \leq  n-\sigma(n-1)$ one gets
$$
\lambda^{(j)}=(\lambda_1,\ldots , \lambda_{n-1},\lambda_n-\lambda_{\sigma^{-1}(n)}-\ldots - \lambda_{\sigma^{-1}(n+1-j)}),
$$
$\sigma^{(j)}(n)=1$ and $\sigma^{(j)}(n-1)=\sigma(n-1)+j$. This implies that $\pi = \sigma^{(n-\sigma(n-1))}$ is a loop permutation. Moreover, since $\pi$ is obtained as an image of $\sigma$ by the Rauzy induction $\cI$, it belongs to the same Rauzy class $\cR$. \end{proof}

\section{Cone partitions} 

Fix a permutation $\pi_0\in S_0$. For every irrational $\la\in\cC$, one may consider the iterations $\cI^k(\la,\pi_0)=(\la^{(k)},\pi^{k}_{\la})$ of the Rauzy induction algorithm (\ref{I}). This generates an infinite sequence of permutations 
\be \label{piseq} (\pi_{\la})= \pi_0,\pi^{1}_{\la},\ldots,\pi^{k}_{\la},\ldots \ee
which is an infinite path in the Rauzy graph $\cG(\pi_0)$. Together with $(\pi_{\la})$ we get an infinite sequence of matrices 
$$ 
A^{1}_{\la}, A^{2}_{\la}, \ldots,A^{k}_{\la},\ldots,
$$
such that $\la^{(k)}= A^{k}_{\la}\cdots A^{1}_{\la} \la$, for $k\geq 1$. We set
$$
A_{\la}^{(k)}=A^{k}_{\la} \cdots A^{2}_{\la} A^{1}_{\la} \; \text{and} \; B^{(k)}_{\la}=(A_{\la}^{(k)})^{-1},$$ where $B^{(k)}_{\la}$ is a non-negative matrix.

Conversely, to any infinite path $\pi_0, \pi^1,\pi^2,\ldots$ in $\cG(\pi_0)$, there corresponds a nonempty closed subset of vectors $\la\in\cC$ that generate that path {\it via} Rauzy induction. To be more precise, let $\pi_0,\pi^1, \ldots, \pi^k$ be a finite path in $\mathcal{G}(\pi_0)$ and define 
$$C_{\pi_0}^{\pi^1,\ldots, \pi^k} =\{\la\in\cC :  \pi^{i}_{\la}=\pi^i, 1\leq i\leq k\}.$$
Every $\la\in {C}_{\pi_0}^{\pi^1,\ldots, \pi^k}$ generates the same beginning of the sequence of matrices $A^1_{\la},A^2_{\la},\ldots,A^k_{\la}$. The set ${C}_{\pi_0}^{\pi^1, \ldots, \pi^k}$ is an Euclidean cone satisfying
$${C}_{\pi_0}^{\pi^1, \ldots, \pi^k}=B^{(k)}_{\la}(\cC)= \{\alpha_1\l_1^{(k)}(\la) + \ldots +\alpha_n \l_n^{(k)}(\la):\alpha_i\geq 0,1\leq i \leq n\},
$$ where  $\l_1^{(k)}(\la), \ldots , \l_n^{(k)}(\la)$ stand for the column-vectors of the matrix $B^{(k)}_{\la}$. 

Since every permutation in $\cG(\pi_0)$ has exactly two followers, the finite path  $\pi_0,\pi^1, \ldots, \pi^k$ may be continued in two different ways, choosing $\pi^{k+1}=(\pi^k)'$ or $\pi^{k+1}=(\pi^k)''$. This choice splits the cone ${C}_{\pi_0}^{\pi^1,\ldots, \pi^k}$ into two nontrivial subcones ${C}_{\pi_0}^{\pi^1, \ldots,\pi^k,  (\pi^k)'}$ and ${C}_{\pi_0}^{\pi^1, \ldots, \pi^k,(\pi^k)''}$.

For $k\geq 1$ fixed, let $\cP^{(k)}(\pi_0)$ be the family of cones ${C}_{\pi_0}^{\pi^1,\ldots, \pi^k}$, where $\pi^1,\ldots,\pi^k$ runs through all possible $k$-paths in $\cG(\pi_0)$ starting at $\pi_0$. The family $\cP^{(k)}(\pi_0)$ forms a partition of $\cC\times \{\pi_0\}$ into $2^k$ subcones and $\cP^{(k+1)}(\pi_0)$ is  a refinement of $\cP^{(k)}(\pi_0)$ for every $k\geq 1$. 
\\

Let $\lambda \in \mathcal{C}$ be irrational. For each $k \geq 1$, we denote by ${C}^{(k)}(\lambda,\pi_0)$ the unique subcone of the partition  $\mathcal{P}^{(k)}(\pi_0)$ which contains $(\lambda,\pi_0)$.  We need the following result.

\begin{lemma}[\cite{kerckhoff}, Corollary 1.9] \label{ker}  Let $\pi_0 \in S_0$. There is a positive constant $c = c(\pi_0)$ such that for almost every $\lambda \in \mathcal{C}$ there are infinitely many integers $k\geq 1$ with
\begin{enumerate}
\item $\pi^k_{\la}=\pi_0$,\\
\item $\displaystyle \frac{max_i \Vert \l^{(k)}_i (\lambda)\Vert_1}{min_i \Vert \l^{(k)}_i (\lambda)\Vert_1} \leq c(\pi)$, where $\l^{(k)}_1(\la) ,\ldots, \l^{(k)}_n(\la)$ are the column-vectors  of the matrix $B^{(k)}_{\la}$ which generate the subcone ${C}^{(k)}(\lambda,\pi_0)$. \end{enumerate} \end{lemma}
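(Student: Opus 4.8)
The plan is to deduce the statement from the ergodic theory of the \emph{normalized} Rauzy induction together with Poincaré recurrence, handling conditions (1) and (2) by the same recurrence mechanism applied to two nested ``good'' sets. First I would projectivize $\cI$ to the induced map $\hat\cI$ on $\Delta\times\cR$, where $\Delta=\{\la\in\cC:\Vert\la\Vert_1=1\}$; this map is conservative and ergodic but only carries an infinite absolutely continuous invariant measure. So I pass to Zorich's acceleration $Z$ of $\hat\cI$, which by \cite{zorich} admits a \emph{finite} ergodic invariant measure $\nu$ equivalent to Lebesgue on $\Delta\times\cR$. Every iterate of $Z$ is a composition of finitely many iterates of $\cI$, so it suffices to produce infinitely many $Z$-times meeting both requirements: these then occur among the Rauzy times $k$. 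I record that the vectors $\ell_i^{(k)}(\la)$ are the columns of the non-negative matrix $B^{(k)}_\la$, i.e. $\ell_i^{(k)}(\la)=B^{(k)}_\la e_i$, and that condition (2) is precisely the demand that these images of the canonical basis have mutually comparable $\Vert\cdot\Vert_1$-norms.

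For condition (1), the fibre $\Delta\times\{\pi_0\}$ has positive $\nu$-measure, since $\nu$ is equivalent to Lebesgue and every permutation of $\cR$ is visited (the Rauzy graph $\cG(\pi_0)$ is strongly connected, by the connectivity lemma of \cite{nru} cited above). As $(Z,\nu)$ is a finite measure-preserving ergodic system, Poincaré recurrence, or the Birkhoff ergodic theorem applied to the indicator of $\Delta\times\{\pi_0\}$, gives for $\nu$-almost every point infinitely many returns to $\Delta\times\{\pi_0\}$. Each such return is an index $k$ with $\pi^k_\la=\pi_0$, which settles (1) for almost every $\la$.

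For the norm balance I would pass to the invertible natural extension of the renormalization, namely Veech's zippered-rectangle dynamics, whose phase space records, besides $(\la,\pi)$, the vertical height data $h=(h_1,\dots,h_n)$. Under renormalization the heights transform contragrediently to $\la$, so that the renormalized heights are, up to the fixed starting vector $h$, the pairings $\langle \ell_i^{(k)}(\la),h\rangle$, whose sizes are comparable to $\Vert\ell_i^{(k)}(\la)\Vert_1$ when $h$ is taken balanced. Thus condition (2) is equivalent to the renormalized configuration lying in the compact, non-degenerate region $K_0$ over $\pi_0$ where the height coordinates are within a fixed factor $c=c(\pi_0)$ of one another. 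The finiteness of the invariant measure for this invertible system (Veech \cite{veech2}, together with the Zorich acceleration) makes $\nu(K_0)>0$ for a suitable $c(\pi_0)$, and Poincaré recurrence then produces infinitely many times at which $\pi^k_\la=\pi_0$ and the balance bound hold \emph{simultaneously}; projecting back to the one-sided system (integrating out the height coordinate) yields the assertion for almost every $\la\in\cC$.

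The genuine obstacle is the dictionary of the last paragraph: identifying balance of the column norms $\Vert\ell_i^{(k)}(\la)\Vert_1$ with recurrence of a compact non-degenerate set in an invertible model, and ensuring that this balanced set meets the fibre over $\pi_0$ in positive measure, so that (1) and (2) are realized at the same times infinitely often. This is exactly where the full strength of the finiteness of the Veech--Zorich invariant measure is indispensable: the one-sided projectivized induction is ergodic but of infinite measure and cannot by itself deliver simultaneous balance. For $n=2$ the whole scheme degenerates to the block-endpoint times of the continued fraction expansion, recovering part (1) of Theorem \ref{cones}.
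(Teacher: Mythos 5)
This lemma is not proved in the paper at all: it is imported verbatim as Kerckhoff's Corollary~1.9, so there is no internal proof to compare against, and what can be judged is whether your sketch is a viable independent derivation. Your route --- projectivize, pass to the Zorich acceleration with its finite ergodic invariant measure, obtain the returns to $\pi_0$ by Birkhoff, and obtain the column balance by lifting to the invertible zippered-rectangle model where the transposed cocycle acts on the height data --- is genuinely different from Kerckhoff's original argument. Kerckhoff's proof is a direct combinatorial estimate: from any cone $C^{(k)}(\la,\pi)$ a uniformly positive proportion (in conditional Lebesgue measure) of the cone reaches a balanced configuration over $\pi_0$ within a bounded number of further induction steps, and one concludes by a Borel--Cantelli-type iteration; it deliberately avoids the finiteness of the invariant measure, whereas your proof leans on it essentially. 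Your identification $h^{(k)}_i=\langle l^{(k)}_i(\la),h\rangle$ for the transposed cocycle is correct, and recurrence to a compact balanced set $K_0$ sitting over the fibre of $\pi_0$ is the right device to make conditions (1) and (2) hold at the same times.

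The one genuine gap is the uniformity of the constant $c(\pi_0)$. From $h^{(k)}_i=\langle l^{(k)}_i(\la),h\rangle$ you only get
$$
\min_j h_j\,\Vert l^{(k)}_i(\la)\Vert_1\ \leq\ h^{(k)}_i\ \leq\ \max_j h_j\,\Vert l^{(k)}_i(\la)\Vert_1,
$$
so balance of the renormalized heights within a factor $c$ yields balance of the column norms only within a factor $c\cdot\bigl(\max_j h_j/\min_j h_j\bigr)$; for a generic point of the natural extension the initial height vector $h$ is arbitrary, so ``integrating out the height coordinate'' as you state it produces a constant depending on $\la$ through $h$, i.e.\ $c=c(\pi_0,\la)$. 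That weaker statement is not enough for the way the lemma is used later (in Lemma \ref{part} the constant $c(\pi)$ must be uniform). The fix is routine but must be said: restrict to the positive-measure slab $E=\{(\la,\pi_0,h):\max_j h_j\leq 2\min_j h_j\}$, use ergodicity of the finite measure system to see that almost every point of $E$ visits $K_0$ infinitely often in forward time, and, since the forward $(\la,\pi)$-dynamics does not depend on $h$, apply Fubini on $E$ to conclude that almost every $\la$ admits some balanced $h$ with this property; this gives (2) with the uniform constant $2c$. With that amendment your argument goes through.
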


\begin{cor} \label{cor-ker} Let $\pi_0 \in S_n^0$. There exists a partition of $\mathcal{C}\times \{\pi_0\}$ whose elements are subcones $\mathcal{C}^{(k)}(\lambda,\pi_0)$ which satisfy Lemma \ref{ker}. \end{cor}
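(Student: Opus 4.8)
The plan is to follow verbatim the argument used for Corollary \ref{distortionthm} in the two-dimensional setting, replacing the distortion condition of Theorem \ref{cones} by the two conditions of Lemma \ref{ker}. First I would observe that both conditions appearing in Lemma \ref{ker} are constant on each cone of the partition $\cP^{(k)}(\pi_0)$: indeed, the permutation $\pi^k_{\la}$ and the matrix $B^{(k)}_{\la}$ — and hence its column-vectors $\l^{(k)}_1(\la),\ldots,\l^{(k)}_n(\la)$ together with their norms — depend only on the finite path $\pi^1,\ldots,\pi^k$ which defines the cone $C^{(k)}(\la,\pi_0)$. It therefore makes sense to say that a cone of $\cP^{(k)}(\pi_0)$ \emph{satisfies Lemma \ref{ker}} when $\pi^k_{\la}=\pi_0$ and the norm ratio is bounded by $c(\pi_0)$ for any (equivalently, every) $\la$ inside it.

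Next, by Lemma \ref{ker}, for almost every irrational $\la\in\cC$ there are infinitely many integers $k\geq 1$ for which $C^{(k)}(\la,\pi_0)$ satisfies both conditions; in particular the set of such integers is nonempty. I would let $k(\la)$ be the smallest of these integers and define the proposed partition to be the collection
$$
\cP=\{\, C^{(k(\la))}(\la,\pi_0) : \la\in\cC \text{ irrational with } k(\la) \text{ defined} \,\}.
$$
By construction each member of $\cP$ satisfies Lemma \ref{ker}, and every admissible $\la$ lies in its own cone $C^{(k(\la))}(\la,\pi_0)$, so the union of the members of $\cP$ covers $\cC\times\{\pi_0\}$ up to a set of measure zero.

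It then remains to check disjointness up to null sets, and this is where the refinement property of the family $(\cP^{(k)}(\pi_0))_{k\geq 1}$ enters. Suppose $C_1=C^{(k_1)}(\la^1,\pi_0)$ and $C_2=C^{(k_2)}(\la^2,\pi_0)$ are two members of $\cP$ with $\mu(C_1\cap C_2)>0$, and assume $k_1\geq k_2$. Since $\cP^{(k_1)}(\pi_0)$ refines $\cP^{(k_2)}(\pi_0)$ and the two cones meet in positive measure, $C_1$ must be contained in $C_2$. Hence $\la^1\in C_2$, which forces $C^{(k_2)}(\la^1,\pi_0)=C_2$; as this cone satisfies both conditions of Lemma \ref{ker} at step $k_2$, the minimality of $k(\la^1)=k_1$ gives $k_1\leq k_2$, whence $k_1=k_2$ and $C_1=C_2$. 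This shows $\cP$ is a partition of $\cC\times\{\pi_0\}$ with the required property.

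I do not expect a serious obstacle beyond the bookkeeping, since the entire quantitative content is carried by Lemma \ref{ker}. The only point genuinely requiring care is the observation in the first paragraph — used implicitly in the disjointness step — that the two conditions of Lemma \ref{ker} are properties of the cone and not of the chosen representative $\la$. This is precisely what makes the minimal-index construction well defined and lets the refinement argument reproduce the proof of Corollary \ref{distortionthm} with no further change.
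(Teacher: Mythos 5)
Your proposal is correct and is exactly the adaptation the paper intends: its proof of Corollary \ref{cor-ker} consists of the single remark that the argument of Corollary \ref{distortionthm} carries over, and your minimal-index construction together with the refinement/disjointness check reproduces that argument faithfully. Your explicit observation that the conditions of Lemma \ref{ker} depend only on the cone (via the defining path) and not on the representative $\la$ is a worthwhile point of care that the paper leaves implicit.
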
 

\begin{proof} The argument is easily adapted from the one used in the proof of Corollary \ref{distortionthm}. \end{proof}

The next lemma is equivalent to the unique ergodicity of almost every interval exchange.

\begin{lemma}[\cite{kerckhoff}, Theorem 1.10, \cite{masur}, \cite{veech1}] \label{3.5} Let $\pi_0\in S_0$. For almost every $\lambda \in \mathcal{C}$, 
$$
\cap_{k\geq 1} {C}^{(k)}(\lambda,\pi_0)= \{\alpha \lambda: \alpha \geq 0\}.
$$ \end{lemma}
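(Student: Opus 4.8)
The plan is to reduce the statement to a projective contraction estimate and then feed the balance result of Lemma \ref{ker} into the Birkhoff theory of positive cones. The inclusion $\{\alpha\la:\alpha\geq 0\}\subseteq \cap_{k\geq 1}{C}^{(k)}(\la,\pi_0)$ is immediate, since $\la$ lies in each ${C}^{(k)}(\la,\pi_0)$ by definition and every cone is invariant under positive scaling. For the reverse inclusion it suffices to show that the nested cones ${C}^{(k)}(\la,\pi_0)=B^{(k)}_{\la}(\cC)$ shrink projectively, i.e.\ that their diameter $\mathrm{diam}_H$ in the Hilbert metric $d_H$ on the interior of $\cC$ tends to $0$: any nonzero $\mu$ lying in all of them would then be projectively equal to $\la$, hence a positive multiple of it.

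First I would record the contraction mechanism. Writing $M_j=(A^j_{\la})^{-1}$, we have $B^{(k)}_{\la}=M_1\cdots M_k$, and each $M_j$ maps $\cC$ into itself, hence is a weak contraction of $d_H$; this already gives that $\mathrm{diam}_H{C}^{(k)}(\la,\pi_0)$ is nonincreasing in $k$, consistent with the nesting noted in Section 8. The gain comes from the Birkhoff--Hopf theorem: a strictly positive matrix $G$ sends $\cC$ to a set of finite Hilbert diameter and contracts $d_H$ by the factor $\tanh(\tfrac14\Delta(G))<1$, where $\Delta(G)$ is the projective diameter of $G(\cC)$. Thus, if I can cut the sequence $M_1,M_2,\ldots$ into infinitely many consecutive blocks $G_1,G_2,\ldots$, each strictly positive with projective diameter bounded by a constant $D=D(\pi_0)$, then, composing from the inside out, the innermost block yields diameter at most $D$ and every further block multiplies it by $\tau=\tanh(D/4)<1$ (a map of Lipschitz constant $\tau$ applied to a set of finite diameter); hence $\mathrm{diam}_H{C}^{(k)}(\la,\pi_0)\to 0$.

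The blocks would be supplied by the balanced return times of Lemma \ref{ker}. For almost every $\la$ there are infinitely many $k$ with $\pi^k_{\la}=\pi_0$ at which the columns $\l^{(k)}_i(\la)$ of $B^{(k)}_{\la}$ have comparable norms. Cutting the Rauzy path at a suitable subsequence of these times produces loops based at $\pi_0$, and by the strong connectivity of $\cG(\pi_0)$ together with the existence of loop permutations (Lemma \ref{7.3}) these loops can be grouped into sub-paths whose matrix product is strictly positive; meanwhile the balance bound $\max_i\Vert\l^{(k)}_i(\la)\Vert_1/\min_i\Vert\l^{(k)}_i(\la)\Vert_1\leq c(\pi_0)$ controls the ratios of the entries and is used to bound the projective diameter of each block by a constant depending only on $\pi_0$. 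This furnishes the infinitely many uniformly positive blocks required above, so the contraction argument applies for almost every $\la$, the exceptional set being the null set already discarded in Lemma \ref{ker} and Corollary \ref{cor-ker}.

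The main obstacle is precisely the passage from the norm-balance of Lemma \ref{ker} to genuine strict positivity together with a uniform bound on the projective diameter of the return blocks: balance of the column norms alone does not forbid vanishing entries, nor does it rule out nearly degenerate (flat) cones of small volume but large diameter, so one must use the combinatorics of the Rauzy graph --- that a complete return loop to $\pi_0$ forces every entry of the associated matrix to become positive --- in tandem with the balance estimate. This is the analytic core of the Kerckhoff--Masur--Veech unique ergodicity theorem, and it is the reason the statement is invoked here as a cited result rather than reproved from scratch.
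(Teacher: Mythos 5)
The paper offers no internal proof of this lemma: it is imported wholesale from Kerckhoff (Theorem 1.10), Masur and Veech, so the only question is whether your sketch actually closes the statement, and it does not. The reduction to Hilbert-metric contraction, the Birkhoff--Hopf coefficient $\tanh(\tfrac14\Delta(G))$, and the inside-out composition of blocks are all correct and are indeed the standard route; the genuine gap is the step you yourself flag, namely the production of infinitely many consecutive blocks $G_j$ that are strictly positive with projective diameter bounded by a constant $D(\pi_0)$. As written, that step would not go through: Lemma \ref{ker} balances the column norms of the \emph{cumulative} matrices $B^{(k)}_{\la}$, not of the incremental blocks $G_j=M_{k_{j-1}+1}\cdots M_{k_j}$, and in any case column-norm balance of a nonnegative matrix says nothing about the projective diameter of its image cone --- a balanced matrix with a single zero entry already sends $\cC$ to a cone touching $\partial\cC$, of infinite Hilbert diameter. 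So the balance estimate cannot be ``used to bound the projective diameter of each block'' in the way you describe.

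The actual mechanism in Kerckhoff's argument is different in both respects. One first fixes a single finite word $\gamma$ in $\cG(\pi_0)$, beginning and ending at $\pi_0$, whose matrix $A_\gamma$ is strictly positive (this uses the connectivity of the Rauzy graph). The balanced times of Lemma \ref{ker} are then used for a \emph{bounded distortion} estimate: at such a time the projectivized map $\cC\to C^{(k)}(\la,\pi_0)$ distorts relative Lebesgue measure by a factor controlled by $c(\pi_0)$, so the conditional probability that the Rauzy path follows $\gamma$ immediately after a balanced return is bounded below by some $\epsilon(\pi_0)>0$. A conditional Borel--Cantelli argument then shows that for almost every $\la$ the word $\gamma$ occurs infinitely often along the path. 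The uniform diameter bound is now free: a block of the form $P A_\gamma Q$ with $P,Q$ nonnegative satisfies $\mathrm{diam}_H\bigl(PA_\gamma Q(\cC)\bigr)\leq \Delta(A_\gamma)$, because $P$ is $1$-Lipschitz for $d_H$. In short, the balance feeds a measure estimate, the fixed positive word supplies both positivity and the uniform diameter, and the probabilistic recurrence is the analytic core you would still have to prove. Since this is precisely the content of the cited Theorem 1.10 of Kerckhoff, your proposal should be read as a correct outline of the known proof with its hardest step left as a citation, which is also exactly how the paper treats it.
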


For $\pi\in S_0$ let $\cP(\pi)$ be a partition of $\cC\times \{\pi\}$ given by Corollary \ref{cor-ker}. The following lemma is a generalization of Corollary \ref{distortionthm} to the case of Rauzy induction.

\begin{lemma} \label{part}  Let $\pi \in S_0$ be a loop permutation. For every $N\geq 1$ there exists a partition $\cP_N$ of $\mathcal{C}\times \{\pi\}$ which is a refinement of the partition $\mathcal{P}(\pi)$ and satisfies the following properties:
\begin{enumerate}
\item its elements are subcones of type $C^{(k)}(\la,\pi)$,\\
\item $\displaystyle \frac{ \Vert \l_n^{(k)} (\lambda)\Vert_1}{ \Vert \l_{n-1} ^{(k)}(\lambda)\Vert_1} > N$, where  $\l_1^{(k)}(\la) ,\ldots, \l_n^{(k)}(\la)$ are the column-vectors  of the matrix $B^{(k)}_{\la}$ which generate the subcone ${C}^{(k)}(\la,\pi)$. \end{enumerate} \end{lemma}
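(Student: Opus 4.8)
The plan is to reproduce, in the multidimensional setting, the scheme of Corollary \ref{distortionthm}, replacing the large partial quotients of the continued fraction expansion by long runs of the loop at $\pi$. The essential new ingredient is the explicit action of the loop on the generating matrices. Since $\pi$ is a loop permutation, Lemma \ref{loop} gives $\pi''=\pi$, so from any finite path ending at $\pi$ one may keep choosing the branch ``$''$'' and stay at $\pi$. First I would compute the effect of one such loop step on the column vectors $l_1^{(k)}(\la),\ldots,l_n^{(k)}(\la)$ of $B^{(k)}_{\la}$: using the explicit form of $A_{\pi}''$ together with $\pi^{-1}(n)=n-1$, one checks that right multiplication by $(A_{\pi}'')^{-1}$ amounts to adding column $n-1$ to column $n$. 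Hence a loop step leaves $l_1,\ldots,l_{n-1}$ unchanged and replaces $l_n$ by $l_n+l_{n-1}$.

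Iterating, after a run of $j$ consecutive loops starting from a cone $C^{(k)}(\la,\pi)$ one obtains the subcone $C^{(k+j)}(\la,\pi)$ whose last two column vectors are $l_{n-1}$ and $l_n+j\,l_{n-1}$. Because $B^{(k)}_{\la}$ is nonnegative, $l_{n-1}\neq 0$ and $\Vert\cdot\Vert_1$ is additive on nonnegative vectors, this gives
$$
\frac{\Vert l_n+j\,l_{n-1}\Vert_1}{\Vert l_{n-1}\Vert_1}=\frac{\Vert l_n\Vert_1+j\Vert l_{n-1}\Vert_1}{\Vert l_{n-1}\Vert_1}\geq j,
$$
so a run of length $j>N$ already produces a cone satisfying property (2). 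It therefore remains to show that, for almost every $\la$, the path $(\pi_{\la})$ contains such a run; this is the analogue of Theorem \ref{cones}(2) and is the crux of the argument. I would deduce it from Lemma \ref{ker}: almost every $\la$ returns to $\pi$ infinitely often, while the subcone of $\cC\times\{\pi\}$ whose first $j$ continuations are loops is a nonempty open cone, hence of positive measure. The recurrence to $\pi$ furnished by Lemma \ref{ker} (equivalently, the conservativity of the renormalized dynamics on the simplex; note that $\cI$ itself is merely dissipative for Lebesgue measure, so naive Poincar\'e recurrence is unavailable) then guarantees that almost every orbit eventually begins such a run, indeed infinitely often.

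Finally I would assemble these cones into the partition exactly as in Corollary \ref{distortionthm}. Starting from the partition $\cP(\pi)$ of Corollary \ref{cor-ker}, for almost every $\la$ let $m(\la)$ be the smallest integer, not smaller than the level of the cone of $\cP(\pi)$ containing $(\la,\pi)$, for which $\Vert l_n^{(m)}(\la)\Vert_1/\Vert l_{n-1}^{(m)}(\la)\Vert_1>N$, and set $\cP_N=\{C^{(m(\la))}(\la,\pi)\}$. The covering of $\cC\times\{\pi\}$ up to a null set follows from the previous paragraph, and the fact that each $\cP^{(k+1)}(\pi)$ refines $\cP^{(k)}(\pi)$ shows, exactly as before, that two selected cones either coincide or meet in a null set; requiring $m(\la)$ to be at least the $\cP(\pi)$-level makes each element of $\cP_N$ sit inside an element of $\cP(\pi)$, so $\cP_N$ refines $\cP(\pi)$ and consists of cones $C^{(k)}(\la,\pi)$ as in (1). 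The only genuinely new point is the existence of long loop runs for almost every $\la$; everything else is a transcription of the two-dimensional argument.
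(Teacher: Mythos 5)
Your computation of the loop's effect on the generating matrices (right multiplication by $(A_{\pi}'')^{-1}$ adds column $n-1$ to column $n$, so a run of $j$ loops produces last columns $l_{n-1}$ and $l_n+j\,l_{n-1}$ with ratio $\geq j$) is exactly the paper's, and so is the final first-entry-time assembly copied from Corollary \ref{distortionthm}. The gap is precisely at the step you yourself flag as the crux. From ``almost every $\la$ returns to $\pi$ infinitely often'' (Lemma \ref{ker}) together with ``the loop-run subcone has positive measure'' you cannot conclude that almost every orbit eventually begins a loop run: recurrence to a node of the Rauzy graph says nothing about which of the positive-measure subcones of $\cC\times\{\pi\}$ the orbit lands in at those return times, and an orbit could a priori return to $\pi$ infinitely often while always branching off the loop before $N$ consecutive ``$''$'' steps are completed. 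Conservativity alone (your parenthetical) does not repair this either; what would repair it is conservativity \emph{plus ergodicity} of the normalized induction, applied to the specific positive-measure set ``at $\pi$ and about to perform $N$ loops'' --- but you never actually invoke ergodicity at this point, only recurrence.

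The paper closes this gap differently, and this is where the bounded-distortion constant $c(\pi)$ of Lemma \ref{ker} is really used (in your write-up it plays no role beyond anchoring the refinement). Inside each cone $C'$ of $\cP(\pi)$, the loop-run subcone $C'_N$ occupies a proportion of $C'$ (measured via radial projection onto the simplex, using the volume formula of \cite[Lemma 3.2]{mns}) equal to $\Vert l_n\Vert_1/\Vert l_n+N l_{n-1}\Vert_1\geq 1/(1+Nc(\pi))$, a lower bound \emph{uniform over all} $C'\in\cP(\pi)$ precisely because their column norms have ratio at most $c(\pi)$. Iterating the construction on $C'\setminus C'_N$ then exhausts $C'$ up to a null set, which is what yields the partition. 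Without such a uniform lower bound the exhaustion could stall, so you need either to supply this volume estimate or to genuinely run the ergodicity argument. A secondary point: your $m(\la)$ is defined as the first time the column ratio exceeds $N$, which need not occur at a step where $\pi^{m}_{\la}=\pi$; the proof of Theorem \ref{rauzy} uses that the selected cones satisfy $\cI^{k}(C_N\times\{\pi\})=\cC\times\{\pi\}$, so the selection must be restricted to return times to $\pi$ (e.g.\ to the ends of the loop runs), as the paper's construction automatically does.
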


\begin{proof} Let  ${C}' \times \{\pi\}={C}^{(k)}(\la,\pi)\times \{\pi\}$ be an element of the partition $\mathcal{P}(\pi)$ and $\l_1 ,\ldots, \l_n$ be the column-vectors  of the matrix $B=B^{(k)}_{\la}$ which defines the subcone ${C}'=B(\mathcal{C})$. 

We recall that $\pi^k_{\la}=\pi$ (see Lemma \ref{ker}).  Since $\pi$ is a loop permutation, we may continue the path $\pi,\pi^1_{\la},\ldots,\pi^k_{\la}$ choosing $\pi^j=\pi$ for $k+1\leq j\leq k+N$. Let ${C}' _N$ be the subcone of ${C}' $ corresponding to this path. It is generated by a matrix $B_N$ whose column-vectors are  $\l_1 ,\ldots, \l_{n-1}, \l_n+ N \l_{n-1}$. We have 
$$\frac{\Vert l_n+N l_{n-1} \Vert_1}{\Vert l_{n-1} \Vert_1} = N + \frac{\Vert l_n \Vert_1}{\Vert l_{n-1} \Vert_1}\geq N +\frac{1}{c(\pi)}.$$

To show that almost every $\la\in C'$ belongs to such a cone, we will show that the cone $C'_N$ occupies a large proportion of the volume of the cone $C'$. To this end, let 
\be \label{simplex} 
\Delta_{n-1}=\{x \in \mathcal{C}:x_1+\ldots + x_n=1\}
 \ee
 be the $(n-1)$-dimensional simplex. We project radially the subcones  ${C}'$ and ${C}' _N$ on $\Delta_{n-1}$. By \cite[Lemma 3.2]{mns}, the ratio of the volumes of the projections of $C'$ and $C'_N$ equals
$$
\frac{ \Vert \l_n \Vert_1 \cdots \Vert \l_n \Vert_1}{  \Vert \l_1 \Vert_1 \cdots \Vert \l_{n-1} \Vert_1 \Vert \l_n+ K\l_{n-1}  \Vert_1}.
$$ 
It is bounded from below by  $\frac{1}{1+Nc(\pi)}$, which is a constant independent of the initial cone $C'$. We may thus deduce that iterating the same construction on $C'\setminus  C'_N$ will result in a desired partition of $C'$. \end{proof}

\section{Proof of Theorem \ref{rauzy}} 

We will adapt the proof of Theorem \ref{euclid} to the multidimensional case of Rauzy induction. 

Let $\mathcal{R}$ be a Rauzy class and $\pi \in \mathcal{R}$ be a loop permutation. Let $\Omega \subset \mathcal{C} \times \mathcal{R}$ be a set of positive measure which, without loss of generality, is assumed to be contained in  $ \mathcal{C} \times \{\pi \}$. As the map $\mathcal{I}$ is non-singular and ergodic, in order to prove that it is exact, according to Lemma \ref{exact-lem},  it suffices to show that there exists $k\geq 1$ such that 
\be \label{7.1} 
\mu(\mathcal{I}^{k+1}(\Omega) \cap \mathcal{I}^{k}(\Omega))>0. 
\ee

Let $\lambda^0\in \Omega$ be a Lebesgue density point of $\Omega$ satisfying Lemma \ref{3.5}. Let $\rho>0 $ be such that the ball of center  $\lambda^0$ and radius $\rho$ is entirely contained in $\mathcal{C}$. Let $D_{\rho}=\{\lambda^0+x : x\in\cC, x_1\la^0_1+\ldots + x_n\la^0_n=0, \Vert x \Vert_1 \leq \rho \}$ which is an $(n-1)$-dimensional ball centered at $\la^0$ of radius $\rho$. Next we define  a section of a cylindrical cone 
$$
\Sigma(\lambda^0,\rho)=\{t\lambda: \lambda \in D_{\rho},  \delta \leq t \leq 1 \}, \; \mbox{ where } \delta= 1-\frac{\rho}{\Vert \lambda^0 \Vert_1 } .
$$
Given $\varepsilon>0$, for $\rho>0$ sufficiently small,
$$
\mu(\Omega \cap  \Sigma(\lambda^0,\rho)) > (1-\varepsilon) \mu(\Sigma(\lambda^0,\rho)),
$$ by Lebesgue density theorem.

Let $N\geq 1$ and $\cP_N(\pi)$ be a partition of $\cC\times \{\pi\}$ given by Lemma  \ref{part}. The sets 
$$\Sigma(\la^0,\rho)\cap C_N, \ \ C_N\in \cP_N(\pi),$$
partition $\Sigma(\la^0,\rho)$ into a family of polyhedral slices. In virtue of Lemma  \ref{3.5}, taking a refinement of $\cP_N(\pi)$ if necessary, one may assume that there exists a subcone $C_N\in \cP_N(\pi)$ such that 
$$\mu(\Omega\cap \Sigma(\la^0,\rho)\cap C_N)\geq(1-2\varepsilon)\mu(  \Sigma(\la^0,\rho)\cap C_N)$$
(see the proof of Lemma \ref{Q}). 

Recall that $C_N=C^{(k)}(\la,\pi)$ for some $k\geq 1$ and $\la\in\cC$. In particular, this implies 
$$\cI^k(C_N\times\{\pi\})=\cC\times \{\pi\}.$$
Let $l_1,\ldots,l_n$ be the column-vectors of the matrix $B^{(k)}_{\la}$ generating the polyhedral cone $C_N$. As in the case of the Euclidean algorithm, the slice $\Sigma(\la^0,\rho)\cap C_N$ may be described as
$$\Sigma(\la^0,\rho)\cap C_N = \{x_1 t_1 l_1 + \ldots + x_n t_n l_n : x_i\geq 0, x_1+\ldots+x_n=1, \alpha_i\leq t_i\leq \beta_i\},$$
where 
\be \label{ratios2}  \frac{\alpha_i}{\beta_i}=\delta \ \text{for} \ 1\leq i\leq n \ \text{and} \ \frac{\alpha_{n-1}}{\alpha_n}=\frac{\beta_{n-1}}{\beta_n}\geq \frac{N}{2}, \ee
if $\rho$ is small enough (see Lemma  \ref{ratios}). 

The matrix $A^{(k)}_{\la}$ associated to $C_N$ sends the vectors $l_1,\ldots,l_n$ onto the canonical basis of $\R^n$. Let $P$ stand for the polyhedral slice $A^{(k)}_{\la}( \Sigma(\la^0,\rho)\cap C_N)$. We have 
$$\cI^k((\Sigma(\la^0,\rho)\cap C_N)\times \{\pi\})=P\times \{\pi\}$$
and 
$$ P=\{ (x_1 t_1,\ldots, x_n t_n)\in\cC : x_i\geq 0, x_1+\ldots+x_n=1, \alpha_i\leq t_i\leq \beta_i\}.$$
A calculation shows that 
$$\mu(P)=\frac{1}{n !} \beta_1\beta_2\cdots \beta_n - \frac{1}{n !} \alpha_1\alpha_2\cdots \alpha_n=\frac{1}{n !}(1-\delta^n)\beta_1\beta_2\cdots \beta_n.$$

As in the case of the Euclidean algorithm, we are interested in the subset $P_+$ of $P$ defined by $P_+=\{\la\in P: \la_{n-1}>\la_n\}$. In virtue of (\ref{ratios2}), a calculation analogous to (\ref{?}) gives
$$\mu(P_+)\geq \frac{N}{N+2}\mu(P).$$ 
For $N$ large enough we may thus assume
$$\mu(\Omega\cap P_+)\geq (1-3\varepsilon)\mu(P_+).$$

We want to show that $\cI(P_+\times \{\pi\})$ intersects $P_+\times \{\pi\}$ and that the volume of this intersection is large enough to imply (\ref{7.1}). First, since $P_+$ is contained in the set $\{\la\in \cC: \la_{n-1}>\la_n\}$ and $\pi$ is a loop permutation, we have $\cI(P_+\times \{\pi\})\subset \cC\times \{\pi\}$. It is thus enough to show the intersection property on the first coordinate. To this end, we remark that the only coordinate that changes when applying $\cI$ on $P_+$ is $\la_{n-1}$. Moreover, the action on the couple of coordinates $(\la_{n-1},\la_n)$  corresponds to that of the Euclidean algorithm $\cE$. The argument of Lemma \ref{intersectionlem} is then valid also in this case. We get
$$\frac{\mu(P_+\times\{\pi\} \cap \cI( P_+\times\{\pi\}))}{ \mu(P_+\times\{\pi\})} \to 1$$
as $N\to\infty$. Choosing $N$ large enough we get the intersection property (\ref{7.1}). The Rauzy induction is exact with respect to Lebesgue measure. Theorem \ref{rauzy} is proved. \hfill $\Box$

\section{Remarks}

In many cases, in particular in \cite{veech1}, instead of the homogenous algorithm $\mathcal{I}$ defined by (\ref{I}), a normalized version  is considered, for example its radial projection on the simplex $\Delta_{n-1}$  (\ref{simplex}),
$$
\tilde{\mathcal{I}}(\lambda,\pi) \in \Delta_{n-1} \times \mathcal{R}  \longmapsto (\frac{\lambda'}{\Vert  \lambda' \Vert_1},\pi') \in\Delta_{n-1} \times \mathcal{R}.
$$
The map $\tilde{\mathcal{I}}$ is conservative and ergodic with respect to Lebesgue measure on the simplex $\Delta_{n-1}$. The following result may be deduced from Theorem \ref{rauzy}.

\begin{cor} The map $\tilde{\mathcal{I}}$ is exact with respect to Lebesgue measure. \end{cor}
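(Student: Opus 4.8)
The plan is to deduce the exactness of $\tilde{\cI}$ from that of the homogeneous map $\cI$ (Theorem \ref{rauzy}) by exploiting the fact that $\tilde{\cI}$ is the radial factor of $\cI$. First I would record the crucial homogeneity: since the two branches of $\cI$ are selected by the scale-invariant condition $\la_n \gtrless \la_n^{\pi}$ and act linearly in $\la$ (through the matrices $A_{\pi}'$ and $A_{\pi}''$), the map $\cI$ is positively homogeneous of degree one, so that $\cI(t\la,\pi)=(t\la',\pi')$ for all $t>0$. Writing $p:\cC\times\cR\to\Delta_{n-1}\times\cR$ for the radial projection $p(\la,\pi)=(\la/\Vert\la\Vert_1,\pi)$, homogeneity gives the semiconjugacy $p\circ\cI=\tilde{\cI}\circ p$, and more generally $\cI^{j}(\mathrm{Cone}(\tilde\Omega))=\mathrm{Cone}(\tilde{\cI}^{\,j}(\tilde\Omega))$ for every $j\geq 0$, where $\mathrm{Cone}(\tilde\Omega)=\{(t\la,\pi):(\la,\pi)\in\tilde\Omega,\ t>0\}$ denotes the full cone over a subset $\tilde\Omega\subset\Delta_{n-1}\times\cR$.

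Next I would set up the measure-theoretic dictionary between a set on the simplex and the cone above it. For measurable $\tilde\Omega$ the set $\mathrm{Cone}(\tilde\Omega)$ is measurable, and integrating in polar coordinates (Jacobian $t^{n-1}$) shows that $\mu(\mathrm{Cone}(\tilde\Omega))>0$ if and only if the $(n-1)$-dimensional Lebesgue measure of $\tilde\Omega$ is positive. Since cones intersect ray-wise, $\mathrm{Cone}(\tilde A)\cap\mathrm{Cone}(\tilde B)=\mathrm{Cone}(\tilde A\cap\tilde B)$, whence $\mu(\mathrm{Cone}(\tilde A)\cap\mathrm{Cone}(\tilde B))>0$ precisely when $\tilde A\cap\tilde B$ has positive measure in $\Delta_{n-1}\times\cR$. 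This is the bookkeeping that will let positivity of intersection measures pass between the two spaces.

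With these two ingredients the argument proceeds through the intersection property and Lemma \ref{exact-lem}. Given $\tilde\Omega\subset\Delta_{n-1}\times\cR$ of positive measure, I would form $\Omega=\mathrm{Cone}(\tilde\Omega)$, which has positive (indeed infinite) Lebesgue measure. By Theorem \ref{rauzy} the map $\cI$ is exact, hence by Lemma \ref{exact-lem} it satisfies the intersection property, so there is some $k\geq 1$ with $\mu(\cI^{k+1}(\Omega)\cap\cI^{k}(\Omega))>0$. Using the homogeneity relation, $\cI^{k+1}(\Omega)=\mathrm{Cone}(\tilde{\cI}^{\,k+1}(\tilde\Omega))$ and $\cI^{k}(\Omega)=\mathrm{Cone}(\tilde{\cI}^{\,k}(\tilde\Omega))$, and the dictionary above converts the positivity of the intersection on the cone into $\tilde\mu(\tilde{\cI}^{\,k+1}(\tilde\Omega)\cap\tilde{\cI}^{\,k}(\tilde\Omega))>0$, where $\tilde\mu$ denotes Lebesgue measure on $\Delta_{n-1}\times\cR$. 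Thus $\tilde{\cI}$ satisfies the intersection property; since it is bi-measurable (being piecewise projective and bijective on each branch), nonsingular, and ergodic, Lemma \ref{exact-lem} applies and yields the exactness of $\tilde{\cI}$.

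I expect the only genuine difficulty to lie in the measure-theoretic bookkeeping: because Lebesgue measure on the cone is infinite, one must argue the equivalences through the positive-versus-null dichotomy rather than through quantitative densities, and one must check that the radial disintegration is compatible with $p$ on each Rauzy component $\cC\times\{\pi\}$. Everything else---the homogeneity of $\cI$, the commutation with $p$, and the verification of the hypotheses of Lemma \ref{exact-lem} for $\tilde{\cI}$---is either immediate from the definitions or already recorded for $\tilde{\cI}$ in the present section.
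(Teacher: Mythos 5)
Your proposal is correct and follows exactly the route the paper intends: the paper states only that the corollary ``may be deduced from Theorem \ref{rauzy}'', and the deduction it has in mind is precisely the transfer of the intersection property from the homogeneous map to its radial factor via the semiconjugacy $p\circ\cI=\tilde{\cI}\circ p$ (the same mechanism used in Section 5 for the normalized Poincar\'e algorithm $\tilde{\cP}$). Your cone/measure bookkeeping fills in the details the paper omits, and the hypotheses of Lemma \ref{exact-lem} for $\tilde{\cI}$ are verified as you describe.
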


For completeness, we give examples of multidimensional continued fraction algorithms which are adapted to our approach and should satisfy the intersection property.

First we define the map
$$
\sigma: \la \in \cC \mapsto \sigma( \la)=(\la_{\sigma_{\la}(1)}, \dots , \la_{\sigma_{\la}(n)})  \in \cC,
$$
where $\sigma_{\la}$ arranges  the coordinates $\la_1,\ldots , \la_n$ in non decreasing order. We recall that, if $\la$ is irrational,  the permutation $\sigma_{\la}$  is unique. 

Let $1\leq i \leq n-1$ and  $\mathcal{T}_i$ be the homogeneous algorithm given by
$$
\la \in \cC \mapsto \sigma_{\la}^{-1}(\la_{\sigma_{\la}(1)}, \dots , \la_{\sigma_{\la}(n-1)}, \la_{\sigma_{\la}(n)}-\la_{\sigma_{\la}(i)})  \in \cC.
$$
The map $\mathcal{T}_i$ is nonsingular and dissipative. The subcones $C_j=\{\la \in \cC: \sigma_{\la}(j)=i\}$, for $1\leq j \leq n$, define a partition of the cone $\cC$. The map $\mathcal{T}_i$ satisfies a Markov partition property: $\mathcal{T}_i(C_j)=\cC$, for $j=1,\ldots , n$.

The map  $\mathcal{T}_{n-1}$ is known as {\it homogeneous Brun algorithm} (see \cite[p.45]{schweiger}) and the map $\mathcal{T}_{1}$ is the called  {\it homogeneous Selmer algorithm} (see \cite[p.45]{schweiger}). Our definitions coincide with those of the reference up to a permutation, however, as far as  ergodicity and exactness are concerned, they bare the same properties.

Another example of multidimensional algorithm that could be studied this way is the {\it Jacobi-Perron algorithm}.  Following  \cite[p.24]{schweiger},  it is convenient to define it on a subcone of $C$. Let $\tilde{\cC}=\{ \la \in \cC: \la_2>0, \la_1\geq \la_i, \forall \ 1\leq i \leq n \}$ and $[x]$ be the integer part of $x \in \R$.   The Jacobi-Perron algorithm, denoted by $\mathcal{J}$, is the map defined by 
$$
\mathcal{J}: \la \in \tilde{\cC} \mapsto  \mathcal{J}( \la)=(\la_2, \la_3-a_2\la_2, \ldots , \la_n-a_{n-1}\la_2, \la_1-a_{n}\la_2) \in \cC,
$$
where $a_j=[\la_{j+1}/\la_2]$, for $2\leq j \leq n-1$ and  $a_n=[\la_{1}/\la_2]$. 

The Jacobi-Perron algorithm is ergodic and has a finite invariant measure absolutely continuous with respect to Lebesgue measure. Although it is not homogeneous, it may be seen as a suitable first return time of a homogeneous algorithm defined in \cite[Section 3.1]{arnaldo-bb}. This underlying algorithm is adapted to our approach and we conjecture that $\mathcal{J}$ is exact.

\bibliographystyle{plain}

\bibliography{exact}

\begin{thebibliography}{10}

\bibitem{aaronson}
Jon Aaronson.
\newblock {\em An introduction to infinite ergodic theory}, volume~50 of {\em
  Mathematical Surveys and Monographs}.
\newblock American Mathematical Society, Providence, RI, 1997.

\bibitem{avila-forni}
Artur Avila and Giovanni Forni.
\newblock Weak mixing for interval exchange transformations and translation
  flows.
\newblock {\em Ann. of Math. (2)}, 165(2):637--664, 2007.

\bibitem{bruin-hawkins}
Henk Bruin and Jane Hawkins.
\newblock Exactness and maximal automorphic factors of unimodal interval maps.
\newblock {\em Ergodic Theory Dynam. Systems}, 21(4):1009--1034, 2001.

\bibitem{bufetov}
Alexander~I. Bufetov.
\newblock Decay of correlations for the {R}auzy-{V}eech-{Z}orich induction map
  on the space of interval exchange transformations and the central limit
  theorem for the {T}eichm\"uller flow on the moduli space of abelian
  differentials.
\newblock {\em J. Amer. Math. Soc.}, 19(3):579--623 (electronic), 2006.

\bibitem{danthony}
Claude Danthony and Arnaldo Nogueira.
\newblock Measured foliations on nonorientable surfaces.
\newblock {\em Ann. Sci. \'Ecole Norm. Sup. (4)}, 23(3):469--494, 1990.

\bibitem{kerckhoff}
S.~P. Kerckhoff.
\newblock Simplicial systems for interval exchange maps and measured
  foliations.
\newblock {\em Ergodic Theory Dynam. Systems}, 5(2):257--271, 1985.

\bibitem{masur}
Howard Masur.
\newblock Interval exchange transformations and measured foliations.
\newblock {\em Ann. of Math. (2)}, 115(1):169--200, 1982.

\bibitem{mns}
Ali Messaoudi, Arnaldo Nogueira, and Fritz Schweiger.
\newblock Ergodic properties of triangle partitions.
\newblock {\em Monatsh. Math.}, 157(3):283--299, 2009.

\bibitem{nogueira-miernowski}
T.~Miernowski and A.~Nogueira.
\newblock Absorbing sets of homogeneous subtractive algorithms.
\newblock {\em preprint}, 2011.

\bibitem{nogueira}
A.~Nogueira.
\newblock The three-dimensional {P}oincar\'e continued fraction algorithm.
\newblock {\em Israel J. Math.}, 90(1-3):373--401, 1995.

\bibitem{arnaldo-bb}
A.~Nogueira.
\newblock The {B}orel-{B}ernstein theorem for multidimensional continued
  fractions.
\newblock {\em J. Anal. Math.}, 85:1--41, 2001.

\bibitem{nru}
A.~Nogueira and D.~Rudolph.
\newblock Topological weak-mixing of interval exchange maps.
\newblock {\em Ergodic Theory Dynam. Systems}, 17(5):1183--1209, 1997.

\bibitem{rauzy}
G.~Rauzy.
\newblock \'{E}changes d'intervalles et transformations induites.
\newblock {\em Acta Arith.}, 34(4):315--328, 1979.

\bibitem{rohlin}
V.A. Rokhlin.
\newblock {Exact endomorphisms of a Lebesgue space.}
\newblock {\em Am. Math. Soc., Transl., II. Ser.}, 39:1--36, 1964.

\bibitem{schweiger}
Fritz Schweiger.
\newblock {\em Multidimensional continued fractions}.
\newblock Oxford Science Publications. Oxford University Press, Oxford, 2000.

\bibitem{veech1}
W.~A. Veech.
\newblock Gauss measures for transformations on the space of interval exchange
  maps.
\newblock {\em Ann. of Math. (2)}, 115(1):201--242, 1982.

\bibitem{veech2}
W.~A. Veech.
\newblock The metric theory of interval exchange transformations. {III}. {T}he
  {S}ah-{A}rnoux-{F}athi invariant.
\newblock {\em Amer. J. Math.}, 106(6):1389--1422, 1984.

\bibitem{zorich}
Anton Zorich.
\newblock Finite {G}auss measure on the space of interval exchange
  transformations. {L}yapunov exponents.
\newblock {\em Ann. Inst. Fourier (Grenoble)}, 46(2):325--370, 1996.

\end{thebibliography}

\vskip1cm
\begin{flushleft}
Tomasz Miernowski \;\; and \;\; Arnaldo Nogueira\\
Institut de Math\'ematiques de Luminy\\
163, avenue de Luminy, Case 907\\
13288 Marseille Cedex 9, France\\

\smallskip
E-mail: {\tt miernow@iml.univ-mrs.fr\;\;} and {\;\;\tt nogueira@iml.univ-mrs.fr}

\end{flushleft}

\end{document}